\theoremstyle{plain}
\newtheorem{teo}{Theorem}[section]
\newtheorem{lem}[teo]{Lemma}
\newtheorem{coro}[teo]{Corollary}
\newtheorem{prop}[teo]{Proposition}
\theoremstyle{definition}
\newtheorem{defi}[teo]{Definition}
\numberwithin{equation}{section}
\newcommand{\NN}{\mathbb{N}}
\newcommand{\ZZ}{\mathbb{Z}}
\newcommand{\RR}{\mathbb{R}}
\newcommand{\CC}{\mathbb{C}}
\newcommand{\Rpos}{{\RR^+}}
\newcommand{\Rnon}{{\RR^+_0}}
\newcommand{\CZ}{Calder\'on--Zygmund}
\newcommand{\MH}{Mihlin--H\"ormander}
\newcommand{\CaC}{Carnot--Cara\-th\'eodory}
\newcommand{\condC}{\textup{(C)}}   
\newcommand{\di}{\,\mathrm{d}}    
\newcommand{\rd}{\eta}  
\newcommand{\fdim}{q}    
\newcommand{\vfG}{X}
\newcommand{\vfA}{\breve{X}_0}
\newcommand{\vfN}{\breve{X}}
\newcommand{\step}{S}
\newcommand{\chil}{\mathfrak{C}}  
\newcommand{\desc}{\mathfrak{D}}  
\newcommand{\lie}{\mathfrak}
\DeclareMathOperator{\tr}{tr}
\DeclareMathOperator{\Aut}{Aut}
\DeclareMathOperator{\arctanh}{arctanh}
\DeclareMathOperator{\arccosh}{arccosh}
\DeclareMathOperator{\supp}{supp}
\newcommand{\dist}{\varrho}
\newcommand{\hc}{\mathbf{c}}    
\newcommand{\tc}{\,:\,}
\newcommand{\loc}{\mathrm{loc}}
\newcommand{\acknowledgments}{\section*{Acknowledgments} The authors are members of the Gruppo Nazionale per l'Analisi Matematica, la Probabilit\`a e le loro Applicazioni (GNAMPA) of the Istituto Nazionale di Alta Matematica (INdAM). The first-named author gratefully acknowledges the support of the Deutsche Forschungsgemeinschaft (project MA 5222/2-1). This work was partially supported by the Progetto GNAMPA 2014 ``Analisi Armonica e Geometrica su variet\`a e gruppi di Lie'' and by the Progetto PRIN 2010-2011 ``Variet\`a reali e complesse: geometria, topologia e analisi armonica''.}
\begin{document}

\title
[Spectral multipliers on solvable extensions of stratified groups]
{Spectral multipliers for sub-Laplacians on solvable extensions of stratified groups}

\author[A. Martini]{Alessio Martini}
\address[A. Martini]{School of Mathematics \\ University of Birmingham \\
Edgbaston \\ Birmingham \\ B17 0AN \\ United Kingdom}
\email{a.martini@bham.ac.uk}

\author[A. Ottazzi]{Alessandro Ottazzi}
\address[A. Ottazzi]{School of Mathematics and Statistics \\ University of New South Wales \\ UNSW Sydney NSW 2052 \\ Australia}
\email{a.ottazzi@unsw.edu.au}

\author[M. Vallarino]{Maria Vallarino}
\address[M. Vallarino]{Dipartimento di Scienze Matematiche ``Giuseppe Luigi Lagrange''
\\ Politecnico di Torino\\
Corso Duca degli Abruzzi 24\\ 10129 Torino\\ Italy}
\email{maria.vallarino@polito.it}

\subjclass[2010]{22E30, 42B15, 42B20, 43A22}
\keywords{Mihlin--H\"ormander multiplier, spectral multiplier, sub-Laplacian, solvable group, singular integral operator, Calder\'on--Zygmund decomposition, Hardy space}

\begin{abstract}
Let $G = N \rtimes A$, where $N$ is a stratified group and $A = \mathbb{R}$ acts on $N$ via automorphic dilations. Homogeneous sub-Laplacians on $N$ and $A$ can be lifted to left-invariant operators on $G$ and their sum is a sub-Laplacian $\Delta$ on $G$. We prove a theorem of Mihlin--H\"ormander type for spectral multipliers of $\Delta$. The proof of the theorem hinges on a Calder\'on--Zygmund theory adapted to a sub-Riemannian structure of $G$ and on $L^1$-estimates of the gradient of the heat kernel associated to the sub-Laplacian $\Delta$.
\end{abstract}

\maketitle

\section{Introduction}

Let $N$ be a stratified Lie group of homogeneous dimension $Q \geq 2$. Let $G$ be the semidirect product $N \rtimes A$, where $A=\mathbb R$ acts on $N$ via automorphic dilations. The group $G$ is a solvable extension of $N$ that is not unimodular and has exponential volume growth; see Section \ref{s:NAgeometry} for more details.
For all $p\in [1,\infty]$, let $L^p(G)$ denote the $L^p$ space with respect to a right Haar measure $\mu$ on $G$.

Consider a system $\vfN_1,\dots,\vfN_\fdim$ of left-invariant vector fields on $N$ that form a basis of the first layer of the Lie algebra of $N$ and let $\vfA$ be the standard basis of the Lie algebra of $A$. The vector fields $\vfA$ on $A$ and $\vfN_1,\dots,\vfN_\fdim$ on $N$ can be lifted to left-invariant vector fields $\vfG_0,\vfG_1,\dots,\vfG_\fdim$ on $G$ which generate the Lie algebra of $G$ and define a sub-Riemannian structure on $G$ with associated left-invariant \CaC\ distance $\dist$.

Let $\Delta$ be the left-invariant sub-Laplacian on $G$ defined by
\begin{equation}\label{eq:sub-Laplacian}
\Delta = -\sum_{j=0}^\fdim \vfG_j^2.
\end{equation}
The operator $\Delta$ extends uniquely to a positive self-adjoint operator on $L^2(G)$. For all bounded Borel functions $F : [0,\infty) \to \CC$, the operator $F(\Delta)$ defined via the spectral theorem is left-invariant and bounded on $L^2(G)$ and, by the Schwartz kernel theorem,
\begin{equation}\label{eq:convolutionkernel}
F(\Delta) f = f * k_{F(\Delta)} \qquad \forall f \in L^2(G)\,,
\end{equation}
for some convolution kernel $k_{F(\Delta)}$, which in general is a distribution on $G$. The object of this paper is the multiplier problem for $\Delta$, i.e., the study of sufficient conditions on $F$ which imply the $L^p$-boundedness of $F(\Delta)$ for some $p\neq 2$.

Our main result provides a sufficient condition of \MH\ type for operators of the form $F(L)$ to be bounded on $L^p(G)$ for $1<p<\infty$; endpoint results are also obtained, both of weak type $(1,1)$ and in terms of the Hardy space $H^1(G)$ and bounded mean oscillation space $BMO(G)$ introduced in \cite{V1} (see Section~\ref{s:CZ}).

Let $\psi$ be a function in $C^{\infty}_c(\RR)$, supported in $[1/4,4]$, such that
\begin{equation}\label{eq:sumpsi}
\sum_{j\in\ZZ}\psi(2^{j}\lambda)=1\qquad \forall \lambda \in (0,\infty).
\end{equation}
For all $s \geq 0$ we define $\|F\|_{0,s}$ and $\|F\|_{\infty,s}$ as follows:
\[
\|F\|_{0,s} = \sup_{t<1}\|F(t\cdot)\,\psi(\cdot)\|_{H^s(\RR)},\qquad
\|F\|_{\infty,s} = \sup_{t\geq 1}\|F(t\cdot)\,\psi(\cdot)\|_{H^s(\RR)},
\]
where $H^s(\RR)$ denotes the $L^2$-Sobolev space of order $s$ on $\RR$. We say that a bounded Borel function $F : [0,\infty) \to \CC$ satisfies a \emph{mixed \MH\ condition of order $(s_0,s_{\infty})$} if $\|F\|_{0,s_0}<\infty$ and $\|F\|_{\infty,s_{\infty}}<\infty$.

\begin{teo}\label{thm:moltiplicatori}
Suppose that $s_0>\frac{3}{2}$ and $s_{\infty}>\frac{Q+1}{2}$. If $F$ satisfies a mixed \MH\ condition of order $(s_0,s_{\infty})$, then $F(\Delta)$ extends to an operator of weak type $(1,1)$ and bounded on $L^p(G)$ for all $p \in (1,\infty)$, bounded from $H^1(G)$ to $L^1(G)$ and from $L^{\infty}(G)$ to $BMO(G)$.
\end{teo}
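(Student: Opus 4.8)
The plan is to deduce the theorem from the \CZ\ theory on $G$ developed in \cite{V1}. According to that theory, it suffices to prove two things: that $F(\Delta)$ is bounded on $L^2(G)$, and that its convolution kernel $k_{F(\Delta)}$ satisfies an integral H\"ormander-type condition relative to the distinguished family of \CZ\ sets of $G$, together with the analogous condition for $\overline{F}$. The $L^2$-boundedness is immediate from the spectral theorem, since $F$ is bounded. Once the H\"ormander conditions are established, \cite{V1} delivers the weak type $(1,1)$ bound and the boundedness from $H^1(G)$ to $L^1(G)$; boundedness on $L^p(G)$ for $1<p<2$ follows by interpolation with $L^2$; the range $2<p<\infty$ follows by duality, since $F(\Delta)^{\ast}=\overline{F}(\Delta)$ and $\overline{F}$ satisfies the same mixed \MH\ condition; and the bound from $L^{\infty}(G)$ to $BMO(G)$ is dual to the bound from $H^1(G)$ to $L^1(G)$ for $\overline{F}(\Delta)$.

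The operator is then split along the spectral resolution \eqref{eq:sumpsi}: writing $F_j=F\cdot\psi(2^j\cdot)$, we set $F^{(\infty)}=\sum_{j\le0}F_j$ and $F^{(0)}=\sum_{j\ge1}F_j$, so that $F=F^{(0)}+F^{(\infty)}$; a rescaling in the spectral variable shows that $F^{(\infty)}$ is governed by $\|F\|_{\infty,s_\infty}$ and $F^{(0)}$ by $\|F\|_{0,s_0}$. The high-frequency part $F^{(\infty)}(\Delta)$ only sees the small-scale geometry of $G$: by finite propagation speed for the wave equation driven by $\Delta$ with respect to $\dist$, together with the rapid off-diagonal decay produced by the Sobolev smoothness of $F_j$, the kernel of $F_j(\Delta)$ is, up to negligible tails, supported in a $\dist$-ball of radius $\sim 2^{j/2}$ about the identity, and at such scales $\dist$-balls behave as in a Carnot group of homogeneous dimension $Q+1$. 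A weighted $L^2$ (subelliptic Plancherel) estimate for these kernels then yields the H\"ormander condition exactly as in the classical local \MH\ theory --- here the \CZ\ sets of \cite{V1} of small radius are ordinary $\dist$-balls, so no new phenomenon occurs --- and the sum over $j\le0$ converges because $s_\infty>\frac{Q+1}{2}$.

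The core of the argument is the low-frequency part $F^{(0)}(\Delta)=\sum_{j\ge1}F_j(\Delta)$, whose kernels are spread over all of $G$ and for which the exponential volume growth and non-unimodularity of $G$ rule out the usual verification of the H\"ormander condition over large balls; here one must use the elongated \CZ\ sets of \cite{V1}. The mechanism is as follows. Factoring $F_j(\Delta)=\bigl[F_j(\Delta)\,e^{2^{-j}\Delta}\bigr]\,e^{-2^{-j}\Delta}$, one notes that the multiplier $\lambda\mapsto F_j(\lambda)\,e^{2^{-j}\lambda}$, rescaled to $\mu\mapsto F(2^{-j}\mu)\,\psi(\mu)\,e^{\mu}$, is smooth, supported in $[1/4,4]$, and of $H^{s_0}(\RR)$-norm $\lesssim\|F(2^{-j}\cdot)\,\psi\|_{H^{s_0}(\RR)}$. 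The size of $k_{F_j(\Delta)}$ and its left-translation differences are then controlled by convolving against the heat kernel $h_t=k_{e^{-t\Delta}}$ at time $t=2^{-j}$ and its horizontal gradient $\nabla h_t=(\vfG_0 h_t,\dots,\vfG_\fdim h_t)$, for which one needs $L^1(G)$-bounds of the form $\|h_t\|_{L^1(G)}\lesssim 1$ and $\bigl\|\,|\nabla h_t|\,\bigr\|_{L^1(G)}\lesssim t^{-1/2}$, together with weighted refinements adapted to the distinguished sets. The resulting series in $j$ converges exactly when $s_0>\frac{3}{2}$; as in the Hebisch--Steger analysis of the $ax+b$ group, this threshold reflects the interplay of the one-dimensional factor $A$ with the sub-Riemannian structure and does not depend on $N$.

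The main obstacle is precisely these $L^1(G)$-estimates for $h_t$ and $\nabla h_t$ (and their weighted versions), uniformly in $t>0$: on $G$ there is neither a dilation structure nor a doubling property to fall back on, and the bounds must be sharp enough near $t=0$ and, above all, as $t\to\infty$ for the summation at the threshold $s_0=\frac{3}{2}$ to close. The strategy is to exploit the semidirect-product structure $G=N\rtimes A$: one decomposes $\Delta$ along the fibration of $G$ over $A$ as $-\vfG_0^2$ plus the (position-dependent) lift to $G$ of a homogeneous sub-Laplacian on $N$, transfers to $G$ the heat-kernel and gradient estimates available on the stratified group $N$ --- where automorphic dilations and the classical bounds are at hand --- by means of a Duhamel/subordination analysis in the $A$-variable, and keeps careful track of the modular function of $G$ when passing between left-invariant kernels and the right Haar measure $\mu$.
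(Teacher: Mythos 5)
Your proposal is correct and follows essentially the same route as the paper: the dyadic spectral decomposition, the verification of the Hebisch--Steger/Vallarino \CZ\ hypotheses via an $L^1$--$L^2$ estimate based on finite propagation speed and a weighted Plancherel bound for the high-frequency regime, and the factorization through $e^{-t\Delta}$ combined with the $L^1$ bound $\left\|\,|\nabla_H h_t|_g\right\|_1\lesssim t^{-1/2}$ (obtained from the subordination formula linking $h_t$ to the heat kernel on $N$) for the low-frequency Hörmander differences. The only ingredient you leave largely implicit is that the large-scale size estimate also requires a weighted $L^2$ bound on $k_{F_j(\Delta)}$, which the paper obtains by transferring weighted Plancherel estimates from the real hyperbolic space $\RR^Q\rtimes\RR$ via Proposition \ref{prp:g_n_calculi}; your ``weighted refinements adapted to the distinguished sets'' gestures at exactly this.
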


Spectral multiplier theorems for Laplacians and sub-Laplacians have been obtained in many different contexts, so we do not attempt to give a complete account of the existing literature and we restrict our discussion to the works that are more closely related to our result. The interested reader is referred to the cited works and references therein for more details.

It was already known in the literature that, unlike other sub-Laplacians on solvable groups (see, e.g., \cite{christ_spectral_1996,hebisch_sub-Laplacians_2005}), the sub-Laplacian $\Delta$ on the group $G$ has $L^p$-differen\-tiable functional calculus. More precisely, Hebisch \cite{H3} proved that if $F$ is compactly supported and $F\in H^s(\RR)$ for some $s>\frac{Q+5}{2}$, then $F(\Delta)$ is bounded on $L^p(G)$ for all $p \in [1,\infty]$. Mustapha \cite{mustapha_multiplicateurs_1998} proved the same result pushing down the smoothness condition on the multiplier $F$, i.e., requiring that $F\in H^s(\RR)$ for some $s>2$. A further improvement with condition $s>3/2$ is stated in \cite[Theorem 6.1]{HS}. Subsequently Gnewuch \cite{G1} obtained similar results for sub-Laplacians on compact extensions of a class of solvable groups, which strictly include the groups we are considering here.

All these results
 are different from Theorem \ref{thm:moltiplicatori} because they only treat the case of compactly supported multipliers $F$ belonging to a Sobolev space of suitable order and show that, in that case, the convolution kernel $k_{F(\Delta)}$ is integrable on $G$.
Our result instead
is a genuine \MH\ theorem for multipliers $F$ which need not be compactly supported nor have bounded derivatives at $0$. In this case the convolution kernels $k_{F(\Delta)}$ need not be integrable; indeed, for the endpoint values $p=1$ and $p=\infty$ we prove boundedness only in the weak type $(1,1)$ sense and in terms of Hardy and BMO spaces.

Other multiplier theorems on solvable extensions of stratified groups were previously obtained in the literature for distinguished full Laplacians. More precisely, Cowling, Giulini, Hulanicki and Mauceri \cite{cowling_spectral_1994} proved a multiplier theorem for a distinguished Laplacian $L$ on $NA$ groups coming from the Iwasawa decomposition of a semisimple Lie group of arbitrary rank: they showed that if $F \in H^{s_0}_\loc(\RR)$ and $\|F\|_{\infty,s_{\infty}}<\infty$ for suitable orders $s_0,s_\infty$ 
depending on the topological dimension and the pseudodimension of the group, then $F(L)$ is of weak type $(1,1)$ and bounded on $L^p$ for all $p\in (1,\infty)$. An analogous result was then proved by Astengo \cite{A2} for a distinguished Laplacian on Damek--Ricci spaces, i.e., groups of the form $H\rtimes \RR$, where $H$ is a Heisenberg-type group \cite{DR}.

Hebisch and Steger \cite[Theorem 2.4]{HS} improved the results in \cite{cowling_spectral_1994} by proving a genuine \MH\ theorem for spectral multipliers of a distinguished Laplacian $L$ on the group $\RR^Q \rtimes \RR$, which corresponds to the case of real hyperbolic spaces (and coincides with our Theorem \ref{thm:moltiplicatori} in the case $N$ is abelian). Their theorem was generalized in \cite{V2} to a distinguished Laplacian on Damek--Ricci spaces. The results in \cite{HS, V2} hinge on a new abstract \CZ\ theory developed by Hebisch and Steger and $L^1$-estimates of the gradient of the heat kernel associated to $L$.

All the aforementioned results for multipliers of a full Laplacian $L$
make strong use of spherical analysis either on semisimple Lie groups or Damek--Ricci spaces. In particular, on Damek--Ricci spaces, the convolution kernels $k_{F(L)}$ have the property that $m^{-1/2} \, k_{F(L)}$ is radial, where $m$ is the modular function, and moreover an explicit formula for the heat kernel associated to $L$ is known.
These tools are not available for the analysis of the sub-Laplacian $\Delta$ on $G$ (unless $N$ is abelian). So we need new techniques to obtain weighted estimates of the convolution kernels of multipliers of $\Delta$ and to study the horizontal gradient of the heat kernel associated to $\Delta$.
A brief illustration of these techniques and of our strategy of proof follows.

In Section \ref{s:NAgeometry} we obtain a precise description of the left-invariant \CaC\ distance
on $G$ in terms of the analogous distance on $N$. This is done by relating solutions to the Hamilton--Jacobi equations on $G$ and $N$. These equations are analogous to the geodesic equations on Riemannian manifolds. However on sub-Riemannian manifolds there may exist ``strictly abnormal minimizers'', i.e., length-minimizing curves that do not correspond to solutions to the Hamilton--Jacobi equations.
Nevertheless a density result by Agrachev \cite{agrachev_subriemannian_2009} allows us to transfer information from solutions to the Hamilton--Jacobi equations to the corresponding sub-Riemannian distances.

Based on our analysis of distances, in Section \ref{s:CZ} we develop a \CZ\ theory adapted to the sub-Riemannian structure of $G$. More precisely, we show that the metric measure space $(G,\dist,\mu)$ satisfies the axioms of the abstract \CZ\ theory introduced in \cite{HS} and further developed in \cite{V1}. The crucial step is the construction of a suitable family of ``admissible sets'' that play the role that in the classical \CZ\ theory on spaces of homogeneous type would be played by balls or ``dyadic cubes'' (cf.\ \cite{C}). 
In this way, when we study spectral multipliers of the sub-Laplacian $\Delta$, we can use the theorems for singular integral operators proved in \cite{HS} for the boundedness of type $(1,1)$ and those contained in \cite{V1} for the boundedness on Hardy and BMO spaces.

In Section \ref{s:sub-Laplacian} we focus on the properties of $\Delta$ and its functional calculus. In particular 
Section \ref{subs:heat} is devoted to an $L^1$-estimate of the horizontal gradient of the heat kernel associated to $\Delta$ at any real time. This estimate is well-known (in much greater generality) for small time, but appears to be new for large time (and nonabelian $N$).
Our proof is based on a formula that relates the sub-Riemannian heat kernels on $G$ and $N$; this relation was already used in \cite{mustapha_multiplicateurs_1998,gnewuch_differentiable_2006} to estimate the heat kernel on $G$ at complex time $1+i\tau$, $\tau \in \RR$. 

Another important consequence of the relation between heat kernels on $G$ and $N$ is discussed in Section \ref{subs:plancherel}. It turns out that, for all multipliers $F$, the $L^2$-norm of the convolution kernel $k_{F(\Delta)}$ on $G$ coincides with the $L^2$-norm of the convolution kernel $k_{F(\tilde\Delta)}$ on the real hyperbolic space $\tilde G = \RR^Q \rtimes \RR$, where $\tilde\Delta$ is a full Laplacian on $\tilde G$.
In fact it is even possible to estimate weighted $L^2$-norms of $k_{F(\Delta)}$ on $G$ by weighted $L^2$-norms of $k_{F(\tilde\Delta)}$ on $\tilde G$, where spherical analysis can be applied. This crucial observation is already contained, with a different proof, in \cite{HEB}.

Finally in Section \ref{s:theorem} we combine all these ingredients to prove Theorem \ref{thm:moltiplicatori}.

A natural question is if the smoothness condition
 $s_\infty > (Q+1)/2$ 
on the multiplier
in Theorem \ref{thm:moltiplicatori} is sharp.
In fact, via transplantation (cf.\ \cite{kenig_divergence_1982}), Theorem \ref{thm:moltiplicatori} implies a similar theorem for a homogeneous sub-Laplacian on the nilpotent contraction $N \times A$ of $G$, with a smoothness condition of order $(Q+1)/2$.
This is just a particular case of the multiplier theorem of Christ \cite{christ_multipliers_1991} and Mauceri and Meda \cite{mauceri_vectorvalued_1990} on stratified groups, because $Q+1$ is the homogeneous dimension of $N\times A$.
If $N$ is abelian, then the transplanted result is sharp and a fortiori the condition $s_\infty > (Q+1)/2$ in Theorem \ref{thm:moltiplicatori} is sharp.
However, for many nonabelian stratified groups $N$ the transplanted result is not sharp: in fact, in several cases, it is possible to push down the smoothness condition to half the topological dimension of the group \cite{hebisch_multiplier_1993,MueS,martini_heisenbergreiter,MMue}. For this reason it might be expected that the smoothness condition $s_\infty > (Q+1)/2$ in Theorem \ref{thm:moltiplicatori} could also be pushed down, at least for some nonabelian $N$.

Recently the second and third named authors, extending a result in \cite{hebisch_spectral_2005}, have proved a multiplier theorem for some Laplacians with drift on Damek--Ricci spaces \cite{OV}; part of the proof of their result hinges on a \MH\ type theorem for a distinguished Laplacian without drift. Inspired by \cite{OV} we think that Theorem \ref{thm:moltiplicatori} could be an ingredient to prove a multiplier theorem for sub-Laplacians with drift on the solvable groups considered here. We recall that among these sub-Laplacians with drift there is the ``intrinsic hypoelliptic Laplacian'' associated with the sub-Riemannian structure on $G$ (see \cite{ABGR}).

Let us fix some notation that will be used throughout.
$\Rpos$ and $\Rnon$ denote the open and closed positive half-lines in $\RR$ respectively.
$\bigcup \mathcal{R}$ denotes the union of a family of sets $\mathcal{R}$, i.e., $\bigcup \mathcal{R} = \bigcup_{R \in \mathcal{R}} R$.
The letter $C$ and variants such as $C_s$ denote finite positive constants that may vary from place to place.
Given two expressions $A$ and $B$, $A\lesssim B$ means that there exists a finite positive constant $C$ such that $ A\le C \,B $. Moreover $A \sim B$ means $A \lesssim B$ and $B \lesssim A$.

\section{Solvable extensions of stratified groups}\label{s:NAgeometry}
In this section we shall introduce the class of Lie groups that we study in the sequel and recall their main properties. In particular, we shall discuss their metric properties in Subsection \ref{subs:metric} and some useful integral formulas in Subsection \ref{subs:integralradial}.

\subsection{Stratified groups and their extensions}\label{subs:NA}

Let $N$ be a stratified group. In other words, $N$ is a simply connected Lie group, whose Lie algebra $\lie{n}$ is endowed with a derivation $D$ such that the eigenspace of $D$ corresponding to the eigenvalue $1$ generates $\lie{n}$ as a Lie algebra. In particular the eigenvalues of $D$ are positive integers $1,\dots,\step$ and $\lie{n}$ is the direct sum of the eigenspaces of $D$, which are called layers: the $j$th layer corresponds to the eigenvalue $j$. Moreover $\lie{n}$ is $\step$-step nilpotent, where $\step$ is the maximum eigenvalue.

The exponential map $\exp_N : \lie{n} \to N$ is a diffeomorphism and provides global coordinates for $N$, that shall be used in the sequel without further mention. Any chosen Lebesgue measure on $\lie{n}$ is then a left and right Haar measure on $N$. Let us fix such a measure and write $|E|$ for the measure of a measurable subset $E \subset N$.

The formula $\delta_t = \exp((\log t) D)$ defines a family of automorphic dilations $(\delta_t)_{t>0}$ on $N$. For all measurable sets $E \subset N$ and $t > 0$, $|\delta_t E| = t^Q |E|$, where $Q = \tr D$ is the homogeneous dimension of $N$. Note that $Q \geq d$, where $d = \dim \lie{n}$ is the topological dimension of $N$, and in fact $Q = d$ if and only if $\step=1$, i.e., if and only if $N$ is abelian. Note moreover that, if $Q = 1$, then $N \cong \RR$. In the following we shall assume that $Q \geq 2$, since the case $Q=1$ has already been treated in \cite{HS}.

Let $A = \RR$, considered as an abelian Lie group. Again we identify $A$ with its Lie algebra $\lie{a}$. Then $A$ acts on $N$ by dilations, that is, we have a homomorphism $A \ni u \mapsto \delta_{e^u} \in \Aut(N)$ and we can define the corresponding semidirect product $G = N \rtimes A$, with operations
\[
(z,u) \cdot (z',u') = (z \cdot e^{uD} z', u+u'), \qquad (z,u)^{-1} = (-e^{-uD} z,-u)
\]
and identity element $0_G=(0_N,0)$. The Lie algebra $\lie{g}$ of $G$ is then canonically identified \cite[\S 3.14-3.15]{varadarajan_lie_1974} with the semidirect product of Lie algebras $\lie{n} \rtimes \lie{a}$, where
\[
[(z,u),(z',u')] = ([z,z']+uD z'-u'Dz,0).
\]

The group $G$ is not nilpotent, but is a solvable Lie group of topological dimension $d+1$.
The left and right Haar measures $\mu_\ell$ and $\mu$ on $G$ are given by
\[
\di\mu_\ell(z,u) = e^{-Qu} \di z \di u  \qquad \di\mu(z,u) = \di z \di u
\]
\cite[\S (15.29)]{hewitt_abstract_1979} and the modular function $m$ is given by $m(z,u) =  e^{-Qu}$. In particular $G$ is not unimodular and has exponential volume growth \cite[Lemme I.3]{guivarch_croissance_1973}. In the following, unless otherwise specified, the right Haar measure $\mu$ will be used to define Lebesgue spaces $L^p(G) = L^p(G,\di \mu)$ on $G$ and $\|f\|_p$ will denote the $L^p(G)$-norm of a function $f$ on $G$.

\subsection{Metric structure and geodesics}\label{subs:metric}

Consider a system $\vfN_1,\dots,\vfN_\fdim$ of left-invariant vector fields on $N$ that form a basis of the first layer of $\lie{n}$. These vector fields provide a global frame for a subbundle $HN$ of the tangent bundle $TN$ of $N$, called the horizontal distribution. Since $N$ is stratified, the first layer generates $\lie{n}$ as a Lie algebra and consequently the horizontal distribution is bracket-generating.

Let $g^N$ be the left-invariant sub-Riemannian metric on the horizontal distribution of $N$ which makes $\vfN_1,\dots,\vfN_\fdim$ into an orthonormal basis. By means of the metric $g^N$ we can define the length of horizontal curves on $N$ (i.e., absolutely continuous curves $\gamma : [a,b] \to N$ whose tangent vector $\dot\gamma(t)$ lies in the horizontal distribution for almost all $t \in [a,b]$) by integrating the $g^N$-norm of the tangent vector. The \CaC\ distance $\dist^N$ on $N$ associated to $g^N$ is then defined by
\[
\dist^N(z,z') = \inf \{ \text{lengths of horizontal curves joining $z$ to $z'$} \}
\]
for all $z,z' \in N$. Since the horizontal distribution is bracket-generating, the distance $\dist^N$ is finite and induces on $N$ the usual topology, by the Chow--Rashevskii theorem. Moreover, since $\vfN_1,\dots,\vfN_\fdim$ are left-invariant and belong to the first layer, the distance $\dist^N$ is left-invariant and homogeneous with respect to the automorphic dilations $\delta_t$. For every $z_0\in N$ and $r> 0$ we denote by $B_N(z_0,r)$ the ball in $N$ centered at $z_0$ of radius $r$, i.e., $B_N(z_0,r)=\{z\in N: \dist^N(z,z_0)<r\}$. Then
\[
|B_N(z_0,r)| =  r^Q |B_N(0_N,1)| \qquad \forall z_0\in N,\,\forall r> 0.
\]

Let $\vfA = \partial_u$ be the canonical basis of $\lie{a}$. The vector fields $\vfA$ on $A$ and $\vfN_1,\dots,\vfN_\fdim$ on $N$ can be lifted to left-invariant vector fields on $G$ given by
\[
\vfG_0|_{(z,u)} = \vfA|_z = \partial_u,  \qquad \vfG_j|_{(z,u)} = e^{u} \vfN_j|_z \qquad \text{ for $j=1,\dots,\fdim.$}
\]
Analogously as above, the system $\vfG_0,\dots,\vfG_\fdim$ generates the Lie algebra $\lie{g}$ and defines a sub-Riemannian structure on $G$ with associated left-invariant \CaC\ distance $\dist$. For all $(z_0,u_0)\in G$ and $r> 0$ we denote by $B_{\varrho}\big((z_0,u_0),r\big)$ the ball in $G$ centered at $(z_0,u_0)$ with radius $r$ with respect to the distance $\dist$.

We shall give a more precise description of the distance $\dist$ and precise asymptotics for the volume of balls by means of geodesics. Note that the characterization of length-minimizing curves in sub-Riemannian geometry is more complicated than in the Riemannian case, because a length-minimizing curve need not correspond to a solution of the Hamilton--Jacobi equations associated to the metric (see, e.g., \cite{Montgomery} for an insight). However, by means of a density result of Agrachev \cite{agrachev_subriemannian_2009}, we will be able to characterize the distance $\dist$ by studying the solutions of the Hamilton--Jacobi equations on $N$ and $G$.

The sub-Riemannian metric $g^N$ determines a dual metric $(g^N)^*$ on the cotangent bundle $T^*N$ of $N$. When $\step > 1$, $(g^N)^*$ is degenerate: its kernel at each point of $N$ is the annihilator of the horizontal distribution. If $N$ is identified as a manifold with the vector space $\lie{n}$ via the exponential map (see Section \ref{subs:NA}), then,
for all $z \in N$, the tangent space $T_z N$ at $z$ is identified with $\lie{n}$ and the cotangent space $T^*_z N$ is identified with $\lie{n}^*$. Let us in turn identify $\lie{n}^*$ with $\lie{n}$ by choosing an inner product $\langle \cdot, \cdot \rangle$ on $\lie{n}$
and let us fix orthonormal coordinates on $\lie{n}$. Then
\[
(g^N)^*_z(\zeta,\zeta') = \langle M_z \zeta, \zeta' \rangle,
\]
where $M_z : \lie{n} \to \lie{n}$ is a symmetric linear map depending smoothly on $z \in N$; moreover $H_z N$ is the range of $M_z$, the restriction $M_z|_{H_z N} : H_z N \to H_z N$ is invertible and
\[
g^N_z(Z,Z') = \langle (M_z|_{H_z N})^{-1} Z, Z' \rangle.
\]
In the chosen coordinates, the Hamilton--Jacobi equations associated to $g^N$ read
\begin{equation}\label{eq:HJ_N}
\dot z_j = \frac{\partial H^N}{\partial \zeta_j}, \qquad \dot \zeta_j = -\frac{\partial H^N}{\partial z_j}
\end{equation}
($j=1,\dots,d$), where the Hamiltonian $H^N : T^* N \to \RR$ is given by
\[
H^N(z,\zeta) = \frac{1}{2} (g^N)^*_z(\zeta,\zeta) = \frac{1}{2} \langle M_z \zeta, \zeta \rangle.
\]
A solution $(z,\zeta) : I \to T^* N$ to the Hamilton--Jacobi equations \eqref{eq:HJ_N}, where $I \subset \RR$ is an interval, will be called an \emph{HJ-curve} on $N$. It is known that the projection to $N$ of such a curve, namely $z : I \to N$, is horizontal and locally length-minimizing; moreover $z$ has constant speed, since $g^N_z(\dot z,\dot z) = 2 H^N(z,\zeta)$ is constant along the HJ-curve $(z,\zeta)$. We define the \emph{length} of an HJ-curve as the length of its projection. Analogously, we say that an HJ-curve joins two points on $N$ if its projection does.

Note that, if $\step \leq 2$, then all length-minimizing horizontal curves on $N$ are ``normal minimizers'', i.e., projections of HJ-curves (see, e.g., the argument after \cite[Theorem 4]{monti_regularity_2014}). However on higher-step groups $N$ there may exist ``strictly abnormal length-minimizers'' \cite{gole_note_1995}, that is, length-minimizers that are not projections of HJ-curves.

An analogous discussion can be conducted on $G$. If $G$ is identified as a manifold with the vector space $\lie{n} \times \lie{a}$ via the map $\lie{n} \times \lie{a} \ni (z,u) \mapsto (\exp_N(z),u) \in G$ (as in Section \ref{subs:NA}),
the left-invariant sub-Riemannian metric $g$ on the horizontal distribution of $TG$ is given by
\[
g_{(z,u)}((Z,U),(Z',U')) = e^{-2u} g^N_z(Z,Z') + U U'.
\]
Hence the dual metric $g^*$ on the cotangent bundle $T^* G$ of $G$ is
\[
g^*_{(z,u)}((\zeta,\nu),(\zeta',\nu')) = e^{2u} (g^N)^*_z(\zeta,\zeta') + \nu \nu'
\]
and the Hamilton--Jacobi equations on $G$ read
\begin{equation}\label{eq:HJ_G}\begin{aligned}
\dot z_j &= \frac{\partial H}{\partial \zeta_j}, & \dot \zeta_j &= -\frac{\partial H}{\partial z_j},\\
\dot u &= \frac{\partial H}{\partial \nu}, & \dot \nu &= -\frac{\partial H}{\partial u}
\end{aligned}\end{equation}
($j=1,\dots,d$), where the Hamiltonian $H : T^*G \to \RR$ is given by
\[
H(z,u,\zeta,\nu) = \frac{1}{2} g^*_{(z,u)}((\zeta,\nu),(\zeta,\nu)) = \frac{1}{2} \left( e^{2u} \langle M_z \zeta, \zeta \rangle + \nu^2 \right).
\]
A solution $(z,u,\zeta,\nu) : I \to T^* G$ to \eqref{eq:HJ_G} will be called an HJ-curve on $G$.

We now look for HJ-curves on $G$ of the form $(z,u,\zeta,\nu) = (z^N \circ v, u, \zeta^N \circ v, \nu)$ where $(z^N,\zeta^N)$ is an HJ-curve on $N$ and $v$ is a suitable change of variables. By plugging these expressions in the Hamilton--Jacobi equations for $G$ and using the fact that $(z^N,\zeta^N)$ satisfies the Hamilton--Jacobi equations for $N$, we obtain the following result.

\begin{lem}\label{lem:geodesic_N_G}
Let $(z^N,\zeta^N)$ be an HJ-curve on $N$. Then $(z^N \circ v, u, \zeta^N \circ v, \nu)$ is an HJ-curve on $G$ provided the functions $v,u,\nu$ satisfy the following conditions:
\begin{equation}\label{eq:reparametrization}
\dot v = e^{2u}, \qquad \dot u = \nu, \qquad \dot \nu = -2 H^N_0 e^{2u},
\end{equation}
where $H^N_0$ is the constant value of $H^N$ along $(z^N,\zeta^N)$. Moreover, $H_0^N$ is related to the constant value $H_0$ of $H$ along $(z^N \circ v, u, \zeta^N \circ v, \nu)$ as follows:
\[
H_0 = e^{2u} H_0^N + \nu^2/2.
\]
\end{lem}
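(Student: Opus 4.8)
The plan is to carry out exactly the substitution indicated just before the statement: insert the ansatz $(z,u,\zeta,\nu) = (z^N\circ v,\, u,\, \zeta^N\circ v,\, \nu)$ into the Hamilton--Jacobi system \eqref{eq:HJ_G} for $G$ and read off the conditions on $v,u,\nu$ under which each of the four equations holds, using that $(z^N,\zeta^N)$ solves \eqref{eq:HJ_N}. The whole computation rests on the factorization $H(z,u,\zeta,\nu) = e^{2u}\,H^N(z,\zeta) + \nu^2/2$, which yields
\[
\frac{\partial H}{\partial \zeta_j} = e^{2u}\frac{\partial H^N}{\partial \zeta_j}, \qquad
\frac{\partial H}{\partial z_j} = e^{2u}\frac{\partial H^N}{\partial z_j}, \qquad
\frac{\partial H}{\partial \nu} = \nu, \qquad
\frac{\partial H}{\partial u} = 2 e^{2u}\, H^N(z,\zeta).
\]

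First I would treat the $z$- and $\zeta$-equations. By the chain rule $\dot z_j = \dot v\,(z^N_j)'\!\circ v$ and $\dot\zeta_j = \dot v\,(\zeta^N_j)'\!\circ v$, and since $(z^N,\zeta^N)$ solves \eqref{eq:HJ_N} these are $\dot v\,\bigl(\partial H^N/\partial\zeta_j\bigr)$ and $-\dot v\,\bigl(\partial H^N/\partial z_j\bigr)$, evaluated along the curve. Comparing with the right-hand sides of \eqref{eq:HJ_G}, which involve the same partials of $H^N$ multiplied by $e^{2u}$, one sees that both families of equations are satisfied as soon as $\dot v = e^{2u}$. Next, the $u$-equation reads $\dot u = \partial H/\partial\nu = \nu$, and the $\nu$-equation reads $\dot\nu = -\partial H/\partial u = -2 e^{2u}\,H^N(z^N\circ v,\zeta^N\circ v)$; here I would invoke conservation of energy for the HJ-flow on $N$ (already used above to see that $z^N$ has constant speed) to replace $H^N(z^N\circ v,\zeta^N\circ v)$ by its constant value $H^N_0$, obtaining $\dot\nu = -2 H^N_0 e^{2u}$. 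This gives precisely the three identities in \eqref{eq:reparametrization}.

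For the last assertion, the factorization of $H$ gives directly, along the $G$-curve, $H_0 = e^{2u} H^N_0 + \nu^2/2$; if desired one may double-check that the right-hand side is indeed constant in $t$ by differentiating and using \eqref{eq:reparametrization}, which produces $2\nu e^{2u} H^N_0 + \nu(-2 H^N_0 e^{2u}) = 0$. There is no genuine obstacle in this argument, which is a direct verification; the only points needing care are the bookkeeping of the chain rule through the reparametrization $v$ and the (standard) appeal to energy conservation on $N$ in the $\nu$-equation. One might also remark in passing that the assertion is to be read formally, i.e.\ on any interval on which $v$ takes values in the domain of $(z^N,\zeta^N)$.
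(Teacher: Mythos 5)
Your verification is correct and is precisely the computation the paper itself invokes (the lemma is stated there as the outcome of "plugging these expressions in" the Hamilton--Jacobi equations, with no further proof given): the factorization $H = e^{2u}H^N + \nu^2/2$, the chain rule through $v$, and conservation of $H^N$ along the $N$-curve are exactly the ingredients needed. Nothing is missing.
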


This leads us to the following definition.

\begin{defi}\label{dfn:associated}
We say that an HJ-curve $(z^N,\zeta^N) : J \to T^* N$ on $N$ and an HJ-curve $(z,u,\zeta,\nu) : I \to T^* G$ on $G$ are \emph{associated} if there exists a diffeomorphism $v : I \to J$ such that $z = z^N \circ v$, $\zeta = \zeta^N \circ v$, and $(v,u,\nu) : I \to \RR^3$ solves \eqref{eq:reparametrization}.
\end{defi}

The Cauchy problem for the autonomous system of equations \eqref{eq:reparametrization} is solved as follows.

\begin{lem}\label{lem:variablechange}
Suppose that $u_0,\nu_0,H_0^N \in \RR$ and $H_0 \geq 0$. In the case $H^N_0 > 0$, the maximal solution $(v,u,\nu)$ to \eqref{eq:reparametrization} with initial data
\begin{equation}\label{eq:initialdata}
v(0)=0, \quad u(0) = u_0, \quad \nu(0) = \nu_0
\end{equation}
is given by
\begin{align*}
v(t) &= \frac{1}{2H_0^N} (\omega \tanh(\omega(t-t_*)) + \nu_0),\\
u(t) &= u_* - \log \cosh(\omega(t-t_*)), \\
\nu(t) &= -\omega \tanh(\omega(t-t_*)),
\end{align*}
where $u_*,t_*,\omega$ are determined so to satisfy the equations and the initial conditions:
\[
\omega = \sqrt{\nu_0^2 + 2H_0^N e^{2u_0}}, \quad u_* = \log \frac{\omega}{\sqrt{2H_0^N}}, \quad t_* = \frac{1}{\omega} \arctanh \frac{\nu_0}{\omega}.
\]
In the case $H^N_0 =0$, the solution with initial data \eqref{eq:initialdata} is given by
\[
v(t) = \begin{cases}
e^{2u_0} \frac{e^{2\nu_0 t}-1}{2\nu_0} & \text{if $\nu_0 \neq 0$,} \\
e^{2u_0} t & \text{if $\nu_0 = 0$,}
\end{cases}, \qquad u(t) = u_0+\nu_0 t, \qquad \nu(t) = \nu_0.
\]
All these solutions $(v,u,\nu)$ are defined globally in time and $v$ is always an increasing diffeomorphism onto its image. Moreover, for all $u_1 \in \RR$ and $T > 0$, the following conditions are equivalent:
\begin{enumerate}[label=(\roman*)]
\item\label{en:variablechange1} $T$ is in the range of $v$ and $u(v^{-1}(T)) = u_1$;
\item\label{en:variablechange2} $\nu_0 = (2T)^{-1} (e^{2u_1}-e^{2u_0}) + H_0^N T$.
\end{enumerate}
\end{lem}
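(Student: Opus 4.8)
The plan is to regard \eqref{eq:reparametrization}--\eqref{eq:initialdata} as a Cauchy problem for an autonomous system whose right-hand side is real-analytic in $(v,u,\nu)$, so that the Picard--Lindel\"of theorem yields a unique maximal solution, and then to produce it explicitly. Since $(g^N)^*_z$ is a nonnegative quadratic form we have $H_0^N\ge 0$, so the two cases of the statement are exhaustive. When $H_0^N=0$ the system decouples: $\nu\equiv\nu_0$, hence $u=u_0+\nu_0 t$, and a single quadrature (the constant fixed by $v(0)=0$) gives the stated $v$. When $H_0^N>0$ one observes that $u$ solves $\ddot u=-2H_0^N e^{2u}$, with first integral $\dot u^2+2H_0^N e^{2u}\equiv\omega^2$, where $\omega^2=\nu_0^2+2H_0^N e^{2u_0}>0$ is read off at $t=0$; integrating the resulting separable equation gives $u(t)=u_*-\log\cosh(\omega(t-t_*))$, and then $\nu=\dot u$ and one more quadrature for $v$ produce the displayed formulas, with $u_*,t_*$ forced by the initial conditions (this uses $\cosh(\arctanh x)=(1-x^2)^{-1/2}$). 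In the write-up it is quickest simply to substitute the displayed functions into \eqref{eq:reparametrization}--\eqref{eq:initialdata} and verify; being defined on all of $\RR$, they are the maximal solutions, which are therefore global. Finally $\dot v=e^{2u}>0$ everywhere, so $v$ is a strictly increasing smooth function on $\RR$, hence a diffeomorphism onto the open interval $v(\RR)$.

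For the equivalence of \ref{en:variablechange1} and \ref{en:variablechange2}, the engine is one algebraic identity along the solution. Differentiating and using \eqref{eq:reparametrization}, the quantity $\nu+2H_0^N v$ is constant, equal to $\nu_0$; substituting $\nu=\nu_0-2H_0^N v$ and differentiating once more shows that $e^{2u}-2\nu_0 v+2H_0^N v^2$ is constant, equal to $e^{2u_0}$. Hence, along the solution,
\[
e^{2u(t)}=P(v(t)),\qquad P(T):=e^{2u_0}+2\nu_0 T-2H_0^N T^2 .
\]
In particular $P>0$ on $v(\RR)$. Moreover $\{P>0\}$ is an interval containing $0$ (since $P(0)=e^{2u_0}>0$ and $P$ has nonpositive leading coefficient), and if its right endpoint were strictly larger than the right endpoint $\beta$ of $v(\RR)$, then $P(\beta)>0$ and $\dot v=e^{2u}=P(v(\cdot))\to P(\beta)>0$ as $t\to+\infty$, contradicting $v(t)\to\beta<\infty$; the symmetric argument at $-\infty$ gives $v(\RR)=\{T\in\RR:P(T)>0\}$ (this may also be read off the explicit formulas).

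Now fix $u_1\in\RR$ and $T>0$. Since $v$ is a bijection onto $v(\RR)$, condition \ref{en:variablechange1} says that $T\in v(\RR)$, i.e.\ $P(T)>0$, and that $e^{2u(v^{-1}(T))}=e^{2u_1}$, i.e.\ $P(T)=e^{2u_1}$; as $e^{2u_1}>0$, the equality subsumes the inequality, so \ref{en:variablechange1} is equivalent to $e^{2u_0}+2\nu_0 T-2H_0^N T^2=e^{2u_1}$, which upon dividing by $2T>0$ and rearranging is exactly \ref{en:variablechange2}. The parts requiring genuine care are pinning down the hyperbolic-function solution and its constants $\omega,u_*,t_*$ when $H_0^N>0$, and correctly identifying the endpoints of $v(\RR)$ (equivalently, the behaviour of $v$ as $t\to\pm\infty$); the rest is routine manipulation of the first integrals $\nu+2H_0^N v$ and $e^{2u}-2\nu_0 v+2H_0^N v^2$.
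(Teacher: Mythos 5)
Your proposal is correct, and its skeleton is the same as the paper's: verify the displayed formulas by substitution, conclude maximality from global existence and monotonicity of $v$ from $\dot v = e^{2u}>0$, and reduce the equivalence of \ref{en:variablechange1} and \ref{en:variablechange2} to the algebraic relation $e^{2u(t)} = e^{2u_0} + 2v(t)\,(\nu_0 - H_0^N v(t))$. The difference is in how you reach that relation and how you identify $v(\RR)$. The paper obtains the relation by manipulating the explicit hyperbolic-function formulas (via $1/\cosh^2 = 1-\tanh^2$) and reads off the range of $v$ directly from the formula for $v$, treating only $H_0^N>0$ in detail; you instead extract the two first integrals $\nu+2H_0^N v$ and $e^{2u}-2\nu_0 v+2H_0^N v^2$ of \eqref{eq:reparametrization} and determine $v(\RR)=\{P>0\}$ by an asymptotic argument ($\dot v \to P(\beta)>0$ would contradict $v\to\beta<\infty$). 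This buys uniformity across the cases $H_0^N>0$ and $H_0^N=0$ and makes the equivalence argument independent of the explicit solution; your closing observation that $P(T)=e^{2u_1}>0$ automatically places $T$ in $v(\RR)$ also replaces the paper's explicit inequality chain $(2H_0^N T-\nu_0)^2<\omega^2$ with something slightly slicker. Both routes are complete; the quantitative content is identical.
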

\begin{proof}
It is not difficult to check that the above formulas give solutions to \eqref{eq:reparametrization} with initial data \eqref{eq:initialdata}. Since they are globally defined in time, they must be the maximal solutions, and $v$ is an increasing diffeomorphism onto its image because $\dot v = e^{2u} > 0$. It remains to show the equivalence of the conditions \ref{en:variablechange1} and \ref{en:variablechange2}; we shall just consider the case $H_0^N > 0$, the other case being similar and easier.

Simple manipulations of the above formulas for $u$ and $v$, also by means of the identity $1/\cosh^2 x = 1-\tanh^2 x$, yield
\[
2 u(t) = \log \left ( \frac{\omega^2}{2H_0^N} \left(1-\frac{(2H_0^N v(t) - \nu_0)^2}{\omega^2}\right) \right),
\]
that is,
\begin{equation}\label{eq:rel_u_v}
e^{2u(t)} = e^{2u_0} + 2 v(t) \, (\nu_0 -H_0^N v(t)).
\end{equation}
In particular, if there exists $t \in \RR$ with $u(t) = u_1$ and $v(t) = T$, then by solving \eqref{eq:rel_u_v} for $\nu_0$ we obtain \ref{en:variablechange2} above. Vice versa, if \ref{en:variablechange2} holds, then
\[
2 H_0^N T^2 -2T\nu_0 = e^{2u_0} - e^{2u_1} < e^{2u_0},
\]
hence
\[
(2H_0^N T - \nu_0)^2 < 2 H_0^N e^{2u_0} + \nu_0^2 = \omega^2.
\]
Because of the explicit formula for $v$, this means that $T$ belongs to the range of $v$, so $v(t) = T$ for some $t \in \RR$ and \eqref{eq:rel_u_v} together with \ref{en:variablechange2} yields $u(t) = u_1$.
\end{proof}

From the above explicit solution we derive several consequences. First, we can construct HJ-curves on $G$ starting from HJ-curves on $N$.

\begin{prop}\label{prp:geodesic_N_G}
Suppose that $T >0$, $(z^N,\zeta^N) : [0,T] \to T^* N$ is an HJ-curve on $N$ and $u_0,u_1 \in \RR$. Then there exists an HJ-curve on $G$ associated to $(z^N,\zeta^N)$ that joins $(z^N(0),u_0)$ to $(z^N(T),u_1)$.
\end{prop}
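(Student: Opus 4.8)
The plan is to obtain the required HJ-curve on $G$ as one \emph{associated}, in the sense of Definition~\ref{dfn:associated}, to the given HJ-curve $(z^N,\zeta^N)$ on $N$. By Lemma~\ref{lem:geodesic_N_G} it is then enough to produce a solution $(v,u,\nu)$ of the reparametrization system~\eqref{eq:reparametrization} with $v$ an increasing diffeomorphism of some interval $[0,t_1]$ onto $[0,T]$ and with $u(0)=u_0$, $u(t_1)=u_1$; the curve on $G$ will then be $(z^N\circ v,\,u,\,\zeta^N\circ v,\,\nu)$. All of this will come from the explicit analysis in Lemma~\ref{lem:variablechange}, once the initial datum $\nu_0$ is chosen appropriately.

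First I would check that Lemma~\ref{lem:variablechange} is applicable, i.e.\ that the relevant constant $H_0$ is nonnegative. Since $(g^N)^*$ is a (possibly degenerate) dual metric, each symmetric map $M_z$ is positive semidefinite, so $H^N=\tfrac12\langle M_z\zeta,\zeta\rangle\ge 0$; hence the constant value $H_0^N$ of $H^N$ along $(z^N,\zeta^N)$ is nonnegative, and by the last identity in Lemma~\ref{lem:geodesic_N_G} the constant $H_0=e^{2u}H_0^N+\nu^2/2$ is nonnegative for any choice of $u_0,\nu_0$. I would then take
\[
\nu_0=\frac{e^{2u_1}-e^{2u_0}}{2T}+H_0^N\,T
\]
and let $(v,u,\nu)$ be the globally defined solution of~\eqref{eq:reparametrization} with initial data~\eqref{eq:initialdata} furnished by Lemma~\ref{lem:variablechange}. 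This choice is exactly condition~\ref{en:variablechange2} of that lemma, so condition~\ref{en:variablechange1} holds as well: $T$ lies in the range of the increasing diffeomorphism $v$, whence $t_1:=v^{-1}(T)$ is well defined, $t_1>0$ because $v(0)=0<T$, and $u(t_1)=u_1$. Restricting to $I:=[0,t_1]$, the map $v|_I\colon I\to[0,T]$ is an increasing diffeomorphism with $v(0)=0$ and $v(t_1)=T$.

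Finally, since $(v,u,\nu)$ solves~\eqref{eq:reparametrization} on $I$, Lemma~\ref{lem:geodesic_N_G} gives that $(z^N\circ v,\,u,\,\zeta^N\circ v,\,\nu)\colon I\to T^*G$ is an HJ-curve on $G$, and by construction it is associated to $(z^N,\zeta^N)$ via the diffeomorphism $v|_I$. Its projection to $G$ is $(z^N\circ v,u)$, which equals $(z^N(0),u_0)$ at $t=0$ and $(z^N(T),u_1)$ at $t=t_1$, so this curve joins the two prescribed points. I do not expect a genuine obstacle here: the substance is entirely contained in Lemmas~\ref{lem:geodesic_N_G} and~\ref{lem:variablechange}, and the only point needing a moment's care is the verification $H_0\ge 0$ that makes Lemma~\ref{lem:variablechange} applicable — after which the value of $\nu_0$ is forced by the equivalence \ref{en:variablechange1}$\Leftrightarrow$\ref{en:variablechange2}.
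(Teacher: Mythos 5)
Your proposal is correct and follows essentially the same route as the paper: choose $\nu_0 = (2T)^{-1}(e^{2u_1}-e^{2u_0}) + H_0^N T$, invoke Lemma~\ref{lem:variablechange} to see that $T$ lies in the range of $v$ with $u(v^{-1}(T))=u_1$, and then apply Lemma~\ref{lem:geodesic_N_G} on $[0,v^{-1}(T)]$. The extra remark verifying $H_0\ge 0$ (so that Lemma~\ref{lem:variablechange} applies) is a harmless and reasonable addition that the paper leaves implicit.
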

\begin{proof}
Set $\nu_0 = (2T)^{-1} (e^{2u_1}-e^{2u_0}) + H_0^N T$. If $(v,u,\nu)$ is the maximal solution to \eqref{eq:reparametrization} with initial data \eqref{eq:initialdata}, then, by Lemma \ref{lem:variablechange}, $T$ is in the range of $v$ and $u(v^{-1}(T)) = u_1$. Therefore, by Lemma \ref{lem:geodesic_N_G}, $(z^N \circ v,u,\zeta^N \circ v, \nu) : [0,v^{-1}(T)] \to T^* G$ is an HJ-curve on $G$ associated to $(z^N,\zeta^N)$, which clearly joins $(z^N(0),u_0)$ to $(z^N(T),u_1)$.
\end{proof}

Vice versa, HJ-curves on $G$ determine HJ-curves on $N$.

\begin{prop}\label{prp:geodesic_G_N}
Every HJ-curve on $G$ is associated to an HJ-curve on $N$.
\end{prop}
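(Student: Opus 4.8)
The plan is to run the construction of Lemma~\ref{lem:geodesic_N_G} in reverse, reading the required change of variables off the $u$-component of the given curve. So let $(z,u,\zeta,\nu) : I \to T^*G$ be an HJ-curve on $G$. Fix $t_0 \in I$ and let $v : I \to \RR$ be the primitive of $t \mapsto e^{2u(t)}$ with $v(t_0)=0$; since $\dot v = e^{2u} > 0$, the map $v$ is a strictly increasing smooth diffeomorphism of $I$ onto an interval $J$. Define $z^N := z \circ v^{-1}$ and $\zeta^N := \zeta \circ v^{-1}$ on $J$. By Definition~\ref{dfn:associated} it then suffices to check (a) that $(z^N,\zeta^N)$ is an HJ-curve on $N$, and (b) that $(v,u,\nu)$ solves \eqref{eq:reparametrization} with $H_0^N$ equal to the constant value of $H^N$ along $(z^N,\zeta^N)$.

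For (a), I would write out \eqref{eq:HJ_G} using $H(z,u,\zeta,\nu) = e^{2u} H^N(z,\zeta) + \nu^2/2$: the first two equations read $\dot z_j = e^{2u}\,\partial H^N/\partial\zeta_j$ and $\dot\zeta_j = -e^{2u}\,\partial H^N/\partial z_j$, the partials being evaluated at $(z,\zeta)$. Applying the chain rule with $s = v(t)$, so that $dt/ds = 1/\dot v = e^{-2u}$, the factors $e^{2u}$ cancel and one obtains exactly $dz^N_j/ds = \partial H^N/\partial\zeta_j$ and $d\zeta^N_j/ds = -\partial H^N/\partial z_j$ evaluated at $(z^N(s),\zeta^N(s))$, i.e.\ \eqref{eq:HJ_N}.

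For (b), the equation $\dot v = e^{2u}$ holds by the very definition of $v$, and $\dot u = \partial H/\partial\nu = \nu$ is the third equation of \eqref{eq:HJ_G}. For the last one, $\dot\nu = -\partial H/\partial u = -2 e^{2u} H^N(z,\zeta)$, so I need $H^N(z(t),\zeta(t))$ to be constant in $t$; this is the conservation of the Hamiltonian $H^N$ along its flow, which one may either quote as standard or verify in one line from the equations in (a) (the Poisson-bracket computation $\frac{d}{dt}H^N(z,\zeta) = \sum_j \partial_{z_j}H^N \dot z_j + \partial_{\zeta_j}H^N\dot\zeta_j = 0$). Denoting this constant by $H_0^N$ — which is precisely the constant value of $H^N$ along $(z^N,\zeta^N)$ — gives the third equation of \eqref{eq:reparametrization}, completing the proof.

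I do not expect a serious obstacle: the argument is essentially bookkeeping once one sees that $v$ must be taken as the primitive of $e^{2u}$ furnished by the given curve, rather than constructed independently. The only points needing a little care are that $v$ is genuinely a diffeomorphism onto its image (immediate from $\dot v = e^{2u}>0$) and the use of conservation of $H^N$ along the flow.
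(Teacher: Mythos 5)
Your proof is correct, but it is not the route the paper takes, and the difference is worth noting. The paper constructs the candidate curve on $N$ \emph{a priori}: it takes the maximal solution $(z^N,\zeta^N)$ of \eqref{eq:HJ_N} with initial data $(z(0),\zeta(0))$, solves the reparametrization system \eqref{eq:reparametrization} via the explicit formulas of Lemma~\ref{lem:variablechange}, observes that $(z^N\circ v,\tilde u,\zeta^N\circ v,\tilde\nu)$ and the given curve are both solutions of \eqref{eq:HJ_G} with the same initial condition, and then invokes uniqueness for ODEs --- after which it still has to run a maximality argument to rule out that the reconstructed curve is defined on a strictly smaller interval than $I$. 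You instead read the change of variables $v$ directly off the given $G$-curve as the primitive of $e^{2u}$ and transport $(z,\zeta)$ forward by $v^{-1}$; the chain-rule cancellation of the factor $e^{2u}$ and the conservation of $H^N(z(t),\zeta(t))$ (your Poisson-bracket line, with $\dot z_j=e^{2u}\partial_{\zeta_j}H^N$ and $\dot\zeta_j=-e^{2u}\partial_{z_j}H^N$, does give zero) then verify all the requirements of Definition~\ref{dfn:associated} directly. Your argument is more elementary and entirely avoids the domain-matching step, which is the only delicate point in the paper's proof; note only that your $(z^N,\zeta^N)$ is defined on $J=v(I)$ rather than on a maximal interval, which is harmless since the paper's notion of HJ-curve does not require maximality. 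The paper's version does have the minor advantage of explicitly reusing Lemmas~\ref{lem:geodesic_N_G} and~\ref{lem:variablechange}, whose explicit formulas are needed again later, but as a proof of this proposition yours is the cleaner one.
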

\begin{proof}
Let $(z,u,\zeta,\nu) : I \to T^* G$ be an HJ-curve on $G$. Without loss of generality we may assume that $0 \in I$. Let $(z^N,\zeta^N) : J \to \RR$ be the maximal solution to the Hamilton--Jacobi equations \eqref{eq:HJ_N} on $N$ with initial data $z^N(0) = z(0)$, $\zeta^N(0) = \zeta(0)$. Let $H^N_0$ be the constant value of $H^N$ along $(z^N,\zeta^N)$ and define
\[
u_0 = u(0), \qquad \nu_0 = \nu(0).
\]
Let $(v,\tilde u,\tilde\nu)$ be the solution to \eqref{eq:reparametrization} with initial data \eqref{eq:initialdata} given by Lemma \ref{lem:variablechange}. Then $(z,u,\zeta,\nu)$ and $(z^N \circ v,\tilde u,\zeta^N \circ v, \tilde \nu)$ are both solutions to \eqref{eq:HJ_G} with the same initial condition and in particular (by uniqueness of solutions to ODEs) they must coincide on the intersections of their intervals of definition.

In order to conclude, it will be sufficient to show that $I$ is contained in the domain $\tilde I$ of $(z^N \circ v,\tilde u,\zeta^N \circ v, \tilde \nu)$. Note that the solution $(v,\tilde u,\tilde \nu)$ to \eqref{eq:reparametrization} given by Lemma \ref{lem:variablechange} is defined globally in time, $v : \RR \to v(\RR)$ is an increasing diffeomorphism and $\tilde I = v^{-1}(J)$ is open. Therefore, if $I$ is not contained in $\tilde I$, then there is a (nonzero) element $t_0 \in I$ of minimum modulus that does not belong to $\tilde I$. Assume, without loss of generality, that $t_0>0$. Then $v(t_0)$ does not belong to the domain $J$ of $(z^N,\zeta^N)$, but $[0,v(t_0)) \subset J$. The equation
\[
(z^N(v(t)),\zeta^N(v(t))) = (z(t),\zeta(t)),
\]
valid for all $t \in [0,t_0)$, and the fact that $v$ is a diffeomorphism, show that
\[
\lim_{\tau \to v(t_0)} (z^N(\tau),\zeta^N(\tau)) = (z(t_0),\zeta(t_0)).
\]
This contradicts the fact that $(z^N,\zeta^N)$ is a maximal solution to \eqref{eq:HJ_N}.
\end{proof}

Finally, there is a relation between lengths of associated HJ-curves.

\begin{prop}
Let $I \subset \RR$ be a compact interval. Let $(z,u,\zeta,\nu) : I \to T^* G$ be an HJ-curve on $G$ of length $L$, which is associated to an HJ-curve on $N$ of length $L^N$. Let $u_0$ and $u_1$ be the values of $u$ at the endpoints of $I$. Then
\begin{equation}\label{eq:lengths}
\cosh L = \frac{1+e^{2(u_1-u_0)}+(e^{-u_0} L^N)^2}{2e^{u_1-u_0}}.
\end{equation}
\end{prop}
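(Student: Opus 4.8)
The plan is to reduce \eqref{eq:lengths} to the explicit solution of \eqref{eq:reparametrization} given by Lemma \ref{lem:variablechange} and then to a single hyperbolic identity. First I would record the two facts that make the two lengths computable: exactly as recalled for $N$, an HJ-curve on $G$ has constant speed $\sqrt{2H}$, where $H$ is the constant value of the Hamiltonian along the curve, so $L=\sqrt{2H_0}\,|I|$; likewise the associated HJ-curve on $N$ has length $L^N=\sqrt{2H_0^N}\,|J|$, where $J$ is its parameter interval. Since \eqref{eq:reparametrization} forces $\dot v=e^{2u}>0$, the diffeomorphism $v$ of Definition \ref{dfn:associated} is increasing and $J=v(I)$, so $|J|=v(b)-v(a)$ when $I=[a,b]$. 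Because the Hamilton--Jacobi equations are autonomous, and translating the time parameter of either curve (and subtracting a constant from $v$ accordingly) changes neither $L$, nor $L^N$, nor the endpoint values of $u$, I may assume $I=[0,T_0]$ and $v(0)=0$; then, by uniqueness for ODEs, $(v,u,\nu)$ agrees on $[0,T_0]$ with the maximal solution of \eqref{eq:reparametrization}--\eqref{eq:initialdata} produced by Lemma \ref{lem:variablechange} (with $u_0=u(0)$, $\nu_0=\nu(0)$; recall $H_0\ge0$ and $H_0^N\ge0$, since both Hamiltonians are nonnegative).

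Before the main computation I would dispose of the case $H_0^N=0$ (which also covers the degenerate case $L=0$): here $L^N=\sqrt{2H_0^N}\,|J|=0$ outright, while Lemma \ref{lem:variablechange} gives $u(t)=u_0+\nu_0 t$, so $u_1-u_0=\nu_0 T_0$, and $2H_0=\nu_0^2$, so $L=|\nu_0|\,T_0=|u_1-u_0|$; then \eqref{eq:lengths} collapses to $\cosh(u_1-u_0)=(1+e^{2(u_1-u_0)})/(2e^{u_1-u_0})$. So from now on I assume $H_0^N>0$ and $T_0>0$.

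In the main case I use the notation $\omega,u_*,t_*$ of Lemma \ref{lem:variablechange} and set $\sigma(t)=\omega(t-t_*)$, $\sigma_0=\sigma(0)$, $\sigma_1=\sigma(T_0)$. Reading off the explicit formulas — and using $H_0=e^{2u}H_0^N+\nu^2/2$ (Lemma \ref{lem:geodesic_N_G}) at the instant where $\nu=0$, which gives $\sqrt{2H_0}=\omega$ — I obtain
\[
L=\omega T_0=\sigma_1-\sigma_0,\qquad \cosh\sigma_0=e^{u_*-u_0},\qquad \cosh\sigma_1=e^{u_*-u_1},
\]
the last two from $u(t)=u_*-\log\cosh\sigma(t)$; in particular $e^{u_1-u_0}=\cosh\sigma_0/\cosh\sigma_1$. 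From $v(t)=(2H_0^N)^{-1}(\omega\tanh\sigma(t)+\nu_0)$ and $\omega/\sqrt{2H_0^N}=e^{u_*}$ I get $L^N=\sqrt{2H_0^N}\,(v(T_0)-v(0))=e^{u_*}(\tanh\sigma_1-\tanh\sigma_0)$, whence, multiplying by $e^{-u_0}=e^{-u_*}\cosh\sigma_0$,
\[
e^{-u_0}L^N=\cosh\sigma_0\,(\tanh\sigma_1-\tanh\sigma_0)=\frac{\sinh(\sigma_1-\sigma_0)}{\cosh\sigma_1}=\frac{\sinh L}{\cosh\sigma_1}.
\]
Substituting these expressions into \eqref{eq:lengths}, multiplying through by $2e^{u_1-u_0}=2\cosh\sigma_0/\cosh\sigma_1$ and clearing the denominator $\cosh^2\sigma_1$, the claim reduces to
\[
\cosh^2\sigma_0+\cosh^2\sigma_1+\sinh^2 L=2\cosh\sigma_0\cosh\sigma_1\cosh L,\qquad L=\sigma_1-\sigma_0,
\]
which I would verify by expanding $\cosh L$ and $\sinh L$ with the addition formulas and using $\cosh^2=1+\sinh^2$.

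The calculation is routine; the point needing care is the bookkeeping between the two parametrizations — in particular, turning the intrinsic length $L^N$ on $N$ into the $G$-side variable $v$ evaluated at the endpoints (via $v$ increasing, so $|J|=v(T_0)$), and noting that although $\sinh\sigma_i$ may have either sign, only $\cosh\sigma_i$ and the nonnegative difference $\sigma_1-\sigma_0=L$ enter the final identity, so no sign ambiguity remains. The normalizations $I=[0,T_0]$ and $v(0)=0$ also merit a brief justification, since Definition \ref{dfn:associated} fixes $v$ only up to these.
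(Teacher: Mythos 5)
Your proof is correct and follows essentially the same route as the paper's: normalize $I=[0,T_0]$ and $v(0)=0$, invoke the explicit solution of Lemma \ref{lem:variablechange}, identify $L=\omega T_0$ and $L^N=\sqrt{2H_0^N}\,v(T_0)$ via constant speeds, and finish with the $\cosh$ addition formula (your variables $\sigma_0,\sigma_1$ are exactly the paper's $-\arctanh(\nu_0/\omega)$ and $\arctanh((2H_0^NT-\nu_0)/\omega)$). The only difference is that you carry out explicitly the ``easy manipulations'' the paper leaves to the reader, and your treatment of the $H_0^N=0$ case and of the normalization is likewise consistent with the paper's.
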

\begin{proof}
Let $(z^N,\zeta^N) : J \to T^* N$ be the associated HJ-curve on $N$ and $v : I \to J$ be the diffeomorphism as in Definition \ref{dfn:associated}. Without loss of generality we may assume that $I = [0,\tau]$ with $\tau > 0$ and that $v(0) = 0$. Set moreover $T= v(\tau)$, $u_0 = u(0)$, $u_1 = u(\tau)$, $\nu_0 = \nu(0)$, and let $H_0^N$ be the constant value of $H^N$ along $(z^N,\zeta^N)$. Then, by Lemma \ref{lem:variablechange}, $\nu_0 = (2T)^{-1} (e^{2u_1}-e^{2u_0}) + H_0^N T$. Moreover, in the case $H_0^N \neq 0$,
\begin{equation}\label{eq:endtime}
\tau = v^{-1}(T) = \frac{1}{\omega} \left( \arctanh \frac{\nu_0}{\omega} + \arctanh \frac{2H_0^N T -\nu_0}{\omega} \right),
\end{equation}
where $\omega = \sqrt{2e^{u_0} H_0^N + \nu_0^2}$, whereas, in the case $H_0^N = 0$,
\[
\tau = \begin{cases}
u^{-1}(u_1) = \frac{u_1-u_0}{\nu_0} = \frac{2u_1-2u_0}{e^{2u_1}-e^{2u_0}} T  & \text{if $\nu_0\neq 0$,} \\
v^{-1}(T) = e^{-2u_0} T & \text{if $\nu_0=0$.}
\end{cases}
\]
Note that $L^N = T \sqrt{2H_0^N}$, whereas $L = \tau \sqrt{2e^{u_0} H_0^N + \nu_0^2}$. Easy manipulations of the above expressions then yield \eqref{eq:lengths}. For example, in the case $H_0^N > 0$, it is $L = \tau \omega$ and \eqref{eq:lengths} can be obtained by multiplying by $\omega$ both sides of \eqref{eq:endtime}, taking the $\cosh$ of both sides and applying the addition formula for $\cosh$.
\end{proof}

We can now turn the relation \eqref{eq:lengths} between lengths into a relation between sub-Riemannian distances.
We should mention that formula \eqref{eq:distanza} below was already given without any proof in \cite[p.~9]{HEB}. The argument given here can be thought of as a precise proof of it.

\begin{prop}\label{prp:distance}
For all $(z_0,u_0),(z_1,u_1) \in G$,
\begin{equation}\label{eq:distanza}\begin{split}
\dist((z_0,u_0),(z_1,u_1)) &= \arccosh \frac{1+e^{2(u_1-u_0)}+(e^{-u_0} \dist^N(z_0,z_1))^2}{2e^{u_1-u_0}} \\
&= \arccosh \left(\cosh(u_0-u_1) + e^{-(u_0+u_1)} \dist^N(z_0,z_1)^2/2 \right).
\end{split}\end{equation}
\end{prop}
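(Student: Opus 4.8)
The plan is to establish the two inequalities in \eqref{eq:distanza} separately, after recognising that its right-hand side is a distance on the hyperbolic plane. Write
\[
f(a,b,\ell) = \arccosh\!\left(\cosh(a-b) + e^{-(a+b)}\ell^2/2\right), \qquad a,b \in \RR,\ \ell \geq 0,
\]
so that the assertion reads $\dist((z_0,u_0),(z_1,u_1)) = f(u_0,u_1,\dist^N(z_0,z_1))$. Let $\mathbb{H}$ be $\RR^2$, with coordinates written $(\xi,u)$, equipped with the Riemannian length functional $(\xi,u) \mapsto \int_0^1 (e^{-2u}\dot\xi^2 + \dot u^2)^{1/2}\,\di t$, and let $\dist_{\mathbb{H}}$ be the associated distance. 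First I would record two elementary facts: (i) for fixed $a,b$ the map $\ell \mapsto f(a,b,\ell)$ is continuous and strictly increasing on $[0,\infty)$ (clearing denominators, the argument of $\arccosh$ equals $1 + ((e^{b}-e^{a})^2 + \ell^2)/(2e^{a+b})$, hence is $\geq 1$ and strictly increasing in $\ell$); and (ii) $f(a,b,\cdot)$ equals $\dist_{\mathbb{H}}((0,a),(\ell,b))$, this being the classical distance formula for the real hyperbolic plane, which is exactly the $N = \RR$ instance of the present construction (for $N=\RR$ the metric $g$ written above becomes $e^{-2u}\,\di z^2 + \di u^2$), cf.\ \cite{HS}. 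I would also recall that, in the usual upper half-plane coordinates $y = e^u > 0$, the minimising geodesics of $\mathbb{H}$ are vertical half-lines and semicircular arcs centred on the boundary $\{y = 0\}$; in particular $\xi$ is monotone along any minimising geodesic.

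For the lower bound I would take an arbitrary horizontal curve $c = (z,u) : [0,1] \to G$ joining $(z_0,u_0)$ to $(z_1,u_1)$. From the explicit form of the sub-Riemannian metric $g$ recalled above, horizontality of $c$ forces $z(\cdot)$ to be an absolutely continuous horizontal curve in $N$ joining $z_0$ to $z_1$, and $g_{c(t)}(\dot c(t),\dot c(t)) = e^{-2u(t)} g^N_{z(t)}(\dot z(t),\dot z(t)) + \dot u(t)^2$ for a.e.\ $t$. Putting $\xi(t) = \int_0^t g^N_{z(r)}(\dot z(r),\dot z(r))^{1/2}\,\di r$, the function $\xi$ is absolutely continuous, $\xi(0) = 0$, and $\xi(1)$ is the $g^N$-length of $z(\cdot)$, whence $\xi(1) \geq \dist^N(z_0,z_1)$. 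Since $\dot\xi = g^N_z(\dot z,\dot z)^{1/2}$ a.e., the $G$-length of $c$ equals the $\mathbb{H}$-length of the curve $t \mapsto (\xi(t),u(t))$, which joins $(0,u_0)$ to $(\xi(1),u_1)$; therefore, by (i) and (ii),
\[
\mathrm{length}_G(c) \;\geq\; \dist_{\mathbb{H}}((0,u_0),(\xi(1),u_1)) \;=\; f(u_0,u_1,\xi(1)) \;\geq\; f(u_0,u_1,\dist^N(z_0,z_1)).
\]
Taking the infimum over $c$ yields ``$\geq$'' in \eqref{eq:distanza}.

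For the upper bound I would run the reverse construction. Given $\varepsilon > 0$, I would choose a unit-speed horizontal curve $\gamma : [0,\ell] \to N$ joining $z_0$ to $z_1$ with $\ell < \dist^N(z_0,z_1) + \varepsilon$ (or $\ell = 0$, $\gamma$ constant, if $z_0 = z_1$), and take a minimising geodesic $\sigma = (\xi,u) : [0,S] \to \mathbb{H}$ from $(0,u_0)$ to $(\ell,u_1)$, so that $S = \dist_{\mathbb{H}}((0,u_0),(\ell,u_1)) = f(u_0,u_1,\ell)$. Because $\xi$ is monotone along $\sigma$ with endpoint values $0$ and $\ell$, one has $\xi([0,S]) \subset [0,\ell]$, so $c(t) := (\gamma(\xi(t)),u(t))$ is well defined. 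Then $\dot z(t) = \dot\xi(t)\,\dot\gamma(\xi(t)) \in H_{z(t)} N$, and since the horizontal distribution of $G$ at $(z,u)$ is $\RR\,\vfG_0 \oplus e^{u} H_z N$ it follows that $c$ is a horizontal curve in $G$ joining $(z_0,u_0)$ to $(z_1,u_1)$; a short computation using the unit speed of $\gamma$ shows $g_{c(t)}(\dot c(t),\dot c(t)) = e^{-2u(t)}\dot\xi(t)^2 + \dot u(t)^2$, so the $G$-length of $c$ equals the $\mathbb{H}$-length of $\sigma$, namely $f(u_0,u_1,\ell)$. Hence $\dist((z_0,u_0),(z_1,u_1)) \leq f(u_0,u_1,\ell)$, and letting $\varepsilon \to 0$ and using continuity of $f$ in $\ell$ gives ``$\leq$''. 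Together with the previous step this proves \eqref{eq:distanza}.

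The only step requiring genuine care is the claim $\xi([0,S]) \subset [0,\ell]$ for the lifting geodesic, which I would settle from the explicit description of the geodesics of $\mathbb{H}$ recalled above (along a vertical line or a semicircular arc the coordinate $\xi$ is monotone, so a minimising geodesic from $\xi = 0$ to $\xi = \ell$ cannot leave $[0,\ell]$); everything else is bookkeeping with the explicit metrics $g$ and $g^N$. I want to stress what this route avoids. An alternative derivation of the upper bound, more in the spirit of the Hamilton--Jacobi analysis of the preceding pages, would apply Proposition \ref{prp:geodesic_N_G} and the length identity \eqref{eq:lengths} to HJ-curves on $N$ of length approaching $\dist^N(z_0,z_1)$; but this needs the identity between $\dist^N(z_0,z_1)$ and the infimum of the lengths of HJ-curves on $N$ joining $z_0$ to $z_1$, which --- since $N$ may carry strictly abnormal length-minimisers --- is exactly the point where a density result such as Agrachev's \cite{agrachev_subriemannian_2009} has to be invoked. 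Working directly with arbitrary horizontal curves and reducing to the one-dimensional hyperbolic problem, as above, sidesteps this altogether; so I would view the hyperbolic reduction as the conceptual key and the strip-containment of geodesics as the sole technical lemma.
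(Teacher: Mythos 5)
Your proof is correct, and it takes a genuinely different route from the paper's. The paper derives \eqref{eq:distanza} from the Hamilton--Jacobi machinery of the preceding pages: it relates HJ-curves on $G$ to HJ-curves on $N$ (Propositions \ref{prp:geodesic_N_G} and \ref{prp:geodesic_G_N}), uses the length identity \eqref{eq:lengths}, and --- precisely because $N$ may carry strictly abnormal minimizers --- invokes Agrachev's density theorem to get the formula on a dense open set of endpoints, extending by continuity of $\dist$ and $\dist^N$. You instead work at the level of arbitrary horizontal curves: for the lower bound you project a horizontal curve of $G$ to its $N$-component, pass to the arclength coordinate $\xi$, and observe that the $G$-length equals the length of $(\xi,u)$ in the hyperbolic plane $\mathbb{H}$, then use monotonicity of $f(a,b,\cdot)$; for the upper bound you lift a minimising $\mathbb{H}$-geodesic over a nearly length-minimising unit-speed horizontal curve in $N$, the strip-containment $\xi([0,S])\subset[0,\ell]$ being the only point needing the explicit description of hyperbolic geodesics. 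This two-sided comparison with $\dist_{\mathbb{H}}$ is cleaner and entirely bypasses both the HJ analysis and Agrachev's theorem, which is a real simplification since the abnormal-minimizer issue is exactly what forces the paper's detour; the paper's route, on the other hand, yields as a by-product the correspondence between normal geodesics of $G$ and of $N$, which has independent geometric interest even though it is not used again in the paper. Two minor points you should spell out if writing this up: that a horizontal curve in $N$ of length $<\dist^N(z_0,z_1)+\varepsilon$ admits an arclength (or at least $1$-Lipschitz) reparametrisation --- note that $|\dot\gamma|_{g^N}\le 1$ a.e.\ only helps the upper bound --- and the classical hyperbolic distance formula in the coordinates $(\xi,e^u)$, which is a one-line computation in the upper half-plane model.
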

\begin{proof}
By left-invariance of $\dist$ and $\dist^N$, it is sufficient to check the above formula in the case $(z_0,u_0) = 0_G$.

By the results in \cite{agrachev_subriemannian_2009} there exists an open dense subset $\Omega$ of $G$ made of points which are joined to the origin $0_G$ by a unique length-minimizing curve and this curve is a projection of an HJ-curve; analogously there exists an open dense subset $\Omega^N$ of $N$ made of points which are joined to the origin $0_N$ by a unique length-minimizing curve and this curve is the projection of an HJ-curve.

Let $\tilde\Omega = \Omega \cap (\Omega^N \times A)$. Then $\tilde\Omega$ is a dense open subset of $G$. Moreover, for all $(z_1,u_1) \in \Omega$, if $(z,u,\zeta,\nu)$ is the length-minimizing HJ-curve on $G$ joining $0_G$ to $(z_1,u_1)$, then the length $L$ of this curve coincides with $\dist(0_G,(z_1,u_1))$. Moreover, by Proposition \ref{prp:geodesic_G_N}, $(z,u,\zeta,\nu)$ is of the form $(\zeta^N \circ v,u,\zeta^N \circ v,\nu)$ for some HJ-curve $(z^N,\zeta^N)$ on $N$, whose length $L^N$ is related to $L$ by \eqref{eq:lengths}.

We now claim that $L^N = \dist^N(0_N,z_1)$. If not, the length-minimizing HJ-curve on $N$ joining $0_N$ to $z_1$ (which exists because $z_1 \in \Omega^N$) would have length less than $L^N$. So, via Proposition \ref{prp:geodesic_N_G}, we could construct an HJ-curve on $G$ joining $0_G$ to $(z_1,u_1)$ with length less than $L$, which would lead to a contradiction.

The relation \eqref{eq:lengths} between lengths yields \eqref{eq:distanza} for all $(z_1,u_1) \in \tilde \Omega$. Since $\tilde\Omega$ is dense and $\dist,\dist^N$ are continuous, \eqref{eq:distanza} holds for all $(z_1,u_1) \in G$.
\end{proof}

\subsection{Volume asymptotics and integral formulas for radial functions}\label{subs:integralradial}
The expression \eqref{eq:distanza} for the sub-Riemannian distance $\dist$ allows us to give precise formulas and asymptotics for the volume of the corresponding balls. It should be noted that detailed information on the local behavior of $\dist$ could be deduced by the ball-box theorem (see \cite{nagel_balls_1985} or \cite{Montgomery}). For the global behavior, however, sufficiently precise general results seem not to be available and formula \eqref{eq:distanza} becomes crucial.

We shall obtain the volume formulas as corollaries of integral formulas for radial functions. By radial function on $G$ we mean a function of the form $x \mapsto f(|x|_\dist)$, where $f : \Rnon \to \CC$ and $|x|_\dist = \dist(x,0_G)$ is the distance of $x \in G$ from the origin. Analogously by radial function on $N$ we mean a function of the form $z \mapsto f(|z|_N)$, where $|z|_N  = \dist^N(z,0_N)$ is the distance of $z \in N$ from the origin.

The homogeneity of $\dist^N$ yields immediately the following integral formula for radial functions on $N$: for all Borel functions $f : \Rnon \to \Rnon$,
\begin{equation}\label{eq:radial_N}
\int_N f(|z|_N) \di z = V_N Q \int_0^\infty f(s) \, s^{Q-1} \di s,
\end{equation}
where $V_N = |B_N(0_N,1)|$. Clearly such a formula can be extended to complex-valued functions $f$, as soon as the integrals make sense. We now obtain a similar formula on $G$.

\begin{prop}\label{prp:radialdensity}
For all Borel functions $f : \Rnon \to \Rnon$,
\begin{equation}\label{eq:radialdensity}
\int_G f(|x|_\dist) \di\mu(x) = \int_G f(|x|_\dist) \,m(x)\di\mu(x)
= c_N Q \int_{0}^\infty f(r) \, \sinh^Q r \di r,
\end{equation}
where $c_N = V_N 2^{Q-1} \frac{\Gamma(Q/2)^2}{\Gamma(Q)}$. In particular
\begin{equation}\label{eq:volumeasymptotics}
\mu\big(B_\dist(0,r)\big) = c_N Q \int_0^r \sinh^Q s \di s \sim \begin{cases}
r^{Q+1} &\text{if $0 < r \leq 1$,}\\
e^{Qr} &\text{if $r \geq 1$.}\end{cases}
\end{equation}
\end{prop}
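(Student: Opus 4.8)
The plan is to reduce the left-hand side of \eqref{eq:radialdensity} to an iterated integral over $N$ and $A=\RR$, apply the radial integration formula \eqref{eq:radial_N} on $N$, and then perform two changes of variable, the second of which turns an auxiliary one-dimensional integral into Euler's Beta integral. First I would specialize Proposition~\ref{prp:distance} to the case $(z_0,u_0)=0_G$ (using the symmetry of $\dist$), which gives
\[
|(z,u)|_\dist=\arccosh\bigl(\cosh u+e^{-u}|z|_N^2/2\bigr),
\]
and in particular $|(z,u)|_\dist>|u|$ whenever $z\neq 0_N$. Writing $\di\mu(z,u)=\di z\di u$ and applying \eqref{eq:radial_N} for each fixed $u$,
\[
\int_G f(|x|_\dist)\di\mu(x)=V_N Q\int_\RR\int_0^\infty f\bigl(\arccosh(\cosh u+e^{-u}s^2/2)\bigr)\,s^{Q-1}\di s\di u.
\]
For each fixed $u$, I would then substitute $s\mapsto r$ through $\cosh r=\cosh u+e^{-u}s^2/2$, so that $r$ ranges over $(|u|,\infty)$, $s=\bigl(2e^u(\cosh r-\cosh u)\bigr)^{1/2}$ and $s\di s=e^u\sinh r\di r$; hence $s^{Q-1}\di s=2^{(Q-2)/2}e^{Qu/2}(\cosh r-\cosh u)^{(Q-2)/2}\sinh r\di r$. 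Since $f\geq0$, Tonelli's theorem lets us exchange the order of integration (the region $\{r>|u|\}$ becoming $r\in(0,\infty)$, $u\in(-r,r)$), leaving
\[
\int_G f(|x|_\dist)\di\mu(x)=V_N Q\,2^{(Q-2)/2}\int_0^\infty f(r)\,\sinh r\;I(r)\di r,
\]
where $I(r):=\int_{-r}^r e^{Qu/2}(\cosh r-\cosh u)^{(Q-2)/2}\di u$.

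The main obstacle is the evaluation of $I(r)$, which I expect to equal $2^{Q/2}\,\Gamma(Q/2)^2\,\Gamma(Q)^{-1}\,\sinh^{Q-1}r$. To prove this I would substitute $x=e^u$ and use the factorization $\cosh r-\cosh u=(e^r-x)(x-e^{-r})/(2x)$, valid for $e^{-r}<x<e^r$: the powers of $x$ cancel, leaving $I(r)=2^{-(Q-2)/2}\int_{e^{-r}}^{e^r}(e^r-x)^{(Q-2)/2}(x-e^{-r})^{(Q-2)/2}\di x$. The affine substitution $x=e^{-r}+2(\sinh r)\,y$, $y\in(0,1)$, then reduces this to $2^{Q/2}(\sinh r)^{Q-1}\int_0^1 y^{Q/2-1}(1-y)^{Q/2-1}\di y=2^{Q/2}(\sinh r)^{Q-1}B(Q/2,Q/2)$, and $B(Q/2,Q/2)=\Gamma(Q/2)^2/\Gamma(Q)$. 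Substituting back into the previous display and noting that $V_N Q\,2^{(Q-2)/2}\cdot2^{Q/2}\,\Gamma(Q/2)^2/\Gamma(Q)=c_N Q$ yields $\int_G f(|x|_\dist)\di\mu(x)=c_N Q\int_0^\infty f(r)\sinh^Q r\di r$. For the weighted integral $\int_G f(|x|_\dist)\,m(x)\di\mu(x)$, I would run the identical computation with the extra factor $m(z,u)=e^{-Qu}$; the only change is that $e^{Qu/2}$ is replaced by $e^{-Qu/2}$ in $I(r)$, and the substitution $u\mapsto-u$ together with the evenness of $\cosh$ shows the value is unchanged. (Alternatively, inversion $x\mapsto x^{-1}$ preserves $|\cdot|_\dist$ and pushes $\mu$ forward to $\mu_\ell=m\,\mu$, giving the first equality in \eqref{eq:radialdensity} at once.)

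Finally, \eqref{eq:volumeasymptotics} follows by taking $f=\mathbf{1}_{[0,r)}$ in \eqref{eq:radialdensity}, which gives $\mu(B_\dist(0,r))=c_N Q\int_0^r\sinh^Q s\di s$. The stated asymptotics are then elementary: for $0<r\leq1$ one uses $s\leq\sinh s\leq(\sinh 1)\,s$ on $[0,1]$, so the integral is comparable to $r^{Q+1}$; for $r\geq1$ one bounds the integrand above by $(e^s/2)^Q$ and below on a unit subinterval near $s=r$ by a constant multiple of $e^{Qs}$, so the integral is comparable to $e^{Qr}$.
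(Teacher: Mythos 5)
Your proposal is correct and follows essentially the same route as the paper: expand the integral over $G$ as an iterated integral, apply the radial formula \eqref{eq:radial_N} on $N$, change variables via \eqref{eq:distanza}, and reduce to the one-dimensional integral $I(r)=\int_{-r}^r e^{Qu/2}(\cosh r-\cosh u)^{Q/2-1}\di u$. The only difference is that you actually carry out the evaluation of $I(r)$ (via the factorization $\cosh r-\cosh u=(e^r-x)(x-e^{-r})/(2x)$ and Euler's Beta integral), which the paper merely asserts; your computation is correct and supplies a detail the paper omits.
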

\begin{proof}
Since $|x|_\dist=|x^{-1}|_\dist$ by left-invariance of $\dist$ (cf.\ \cite[\S III.4, p.\ 40]{varopoulos_analysis_1992}),
\[
\int_G f(|x|_\dist) \di \mu(x) =
\int_G f(|x^{-1}|_\dist) \di \mu(x) 
=\int_G f(|x|_\dist) \,m(x)\di \mu(x).
\]
Moreover, by formulas \eqref{eq:distanza} and \eqref{eq:radial_N},
\[\begin{split}
&\int_G f(|x|_\dist) \di \mu(x) \\
&= V_N Q\int_{-\infty}^\infty \int_0^\infty f(\arccosh(\cosh u + e^{-u} s^2/2)) \,s^{Q-1}\di s \di u \\
&= V_N Q \, 2^{Q/2-1} \int_{-\infty}^\infty \int_0^\infty f(\arccosh(\cosh u + s)) \, e^{Qu/2} \,s^{Q/2-1}\di s \di u \\
&= V_N Q \, 2^{Q/2-1} \int_{0}^\infty f(r) \, \sinh r \int_{-r}^r e^{Qu/2} \,(\cosh r - \cosh u)^{Q/2-1} \di u \di r
\end{split}\]
(in the last step the change of variable $s = \cosh r - \cosh u$ was used). One can explicitly evaluate the inner integral in the last formula and obtain
\[
\int_{-r}^r e^{Qu/2} \,(\cosh r - \cosh u)^{Q/2-1} \di u =2^{Q/2} \frac{\Gamma(Q/2)^2}{\Gamma(Q)} \sinh^{Q-1} r.
\]
This gives \eqref{eq:radialdensity} and \eqref{eq:volumeasymptotics} follows by taking $f = \chi_{[0,r)}$.
\end{proof}

Similar computations give us expressions for weighted integrals of radial functions, that will be useful in the sequel. Define the weight $w$ on $G$ by $w(z,u) = |z|_N^Q$. Then the following result holds.

\begin{prop}
There exists a constant $C_Q > 0$ such that for all Borel functions $f : \Rnon \to \Rnon$,
\begin{equation}\label{eq:weightedradialcomparison}
\int_G m(x) f(|x|_\dist) \, w(x) \di \mu(x) \leq C_Q \int_G f(|x|_\dist) \, |x|_\dist \di \mu  (x).
\end{equation}
Moreover
\begin{equation}\label{eq:inverseweight}
\int_{B_\dist(0,r)} (1+w)^{-1} \di \mu \lesssim \begin{cases}
r^{Q+1} &\text{if $0 < r \leq 1$,} \\
r^{2}   &\text{if $r \geq 1$.}
\end{cases}
\end{equation}
\end{prop}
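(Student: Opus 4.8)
The plan is to derive both estimates from the radial integration formula of Proposition \ref{prp:radialdensity}, combined with the distance formula \eqref{eq:distanza} and the homogeneous integration formula \eqref{eq:radial_N} on $N$. The key observation is that both integrals are integrals of functions depending only on $|x|_\dist$ together with the weight $w(z,u) = |z|_N^Q$, so one should redo the computation in the proof of Proposition \ref{prp:radialdensity} but keeping track of the extra factor $s^Q$ coming from $w$. Recall that in those coordinates $m(x) \di\mu(x) = e^{-Qu}\di z\, \di u$ and, by \eqref{eq:distanza} with $(z_0,u_0)=0_G$, one has $|x|_\dist = \arccosh(\cosh u + e^{-u}|z|_N^2/2)$.

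For \eqref{eq:weightedradialcomparison}, I would start from
\[
\int_G m(x)\, f(|x|_\dist)\, w(x)\di\mu(x)
= V_N Q \int_{-\infty}^\infty \int_0^\infty f(\arccosh(\cosh u + e^{-u}s^2/2))\, s^{2Q-1} e^{-Qu}\di s\,\di u,
\]
using \eqref{eq:radial_N} with $f$ replaced by $s\mapsto f(\cdots)\,s^Q$ (which accounts for the weight $w = |z|_N^Q$) and recalling that $w(x)\,m(x)$ contributes $s^Q \cdot e^{-Qu}$ against the factor $s^{Q-1}$ from \eqref{eq:radial_N}. Substituting $\sigma = e^{-u}s^2/2$ to linearize the argument of $\arccosh$, then switching to the radial variable $r$ via $\cosh r = \cosh u + \sigma$ exactly as in the proof of Proposition \ref{prp:radialdensity}, the $u$-integral becomes an elementary integral of the form $\int_{-r}^r e^{\beta u}(\cosh r - \cosh u)^{Q-1}\di u$; evaluating it (again reducible to a Beta integral) yields a density proportional to $\sinh^{2Q-1} r$ on the left side. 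Comparing with the right side, where Proposition \ref{prp:radialdensity} applied to $s\mapsto f(s)\,s$ gives density $\sim \sinh^Q r \cdot r$, the inequality \eqref{eq:weightedradialcomparison} reduces to the pointwise bound $\sinh^{2Q-1} r \lesssim r \sinh^Q r$ for all $r>0$, i.e. $\sinh^{Q-1} r \lesssim r$ for $0<r\le 1$ and an exponential-versus-linear comparison for $r\ge 1$ — wait, that goes the wrong way for large $r$, so the honest approach is to compute both sides directly as integrals in $r$ and compare the resulting one-dimensional kernels, where for large $r$ the left integrand behaves like $f(r)e^{(2Q-1)r}\cdot$(something) and must be dominated; the correct bookkeeping of the Jacobians will show the $s^Q$ weight is compensated by the $e^{-Qu}$ and the geometry, leaving exactly an extra factor $r$ (not $e^{Qr}$) relative to the unweighted formula. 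This Jacobian bookkeeping is the main obstacle: one has to be careful that the weight $|z|_N^Q$ translates, after the $\sigma$- and $r$-substitutions, into powers of $(\cosh r - \cosh u)$ that integrate against $e^{-Qu}$ to give precisely $\sinh^Q r$ times a bounded-by-$r$ factor, and not an exponentially larger one.

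For \eqref{eq:inverseweight}, I would apply Proposition \ref{prp:radialdensity} with $f = \chi_{[0,r)}$ but now with the extra weight $(1+w)^{-1}$; since $(1+w)^{-1} \le 1$ we immediately get the bound $\lesssim r^{Q+1}$ for $0<r\le 1$ from \eqref{eq:volumeasymptotics}, so only the regime $r\ge 1$ needs work. There I would split $B_\dist(0,r)$ according to whether $|z|_N \le 1$ or $|z|_N > 1$: on the first piece $(1+w)^{-1}\sim 1$ and the constraint $\arccosh(\cosh u + e^{-u}|z|_N^2/2) < r$ with $|z|_N$ bounded forces $|u| \lesssim r$, contributing $\lesssim r \cdot |B_N(0_N,1)| \lesssim r$; on the second piece I use $(1+w)^{-1} \le |z|_N^{-Q}$ and the radial formula \eqref{eq:radial_N} to convert $\int_{|z|_N>1,\, x\in B_\dist(0,r)} |z|_N^{-Q}\di z\, \di u$, which, after the same change of variables as above, is comparable to $\int_0^r (\text{bounded density in }u)\,\di u \lesssim r$ — the weight $|z|_N^{-Q}$ cancelling the $\sinh^Q r$-type growth and leaving only the $u$-range of length $\sim r$. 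Summing the two pieces gives $\lesssim r$, hence $\lesssim r^2$ for $r\ge 1$ as claimed (the weaker $r^2$ bound leaving room and matching the $r^{Q+1}$ small-scale bound at $r=1$). The only delicate point is the second piece: one must check that restricting to $|z|_N>1$ and weighting by $|z|_N^{-Q}$ really does kill all the exponential volume growth, which again comes down to the same Jacobian computation as for \eqref{eq:weightedradialcomparison}, so in practice I would prove \eqref{eq:weightedradialcomparison} first and deduce a good part of \eqref{eq:inverseweight} from it by a dyadic decomposition in $|z|_N$.
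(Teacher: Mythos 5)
Your overall strategy --- rerunning the radial-integration computation of Proposition \ref{prp:radialdensity} with the extra weight --- is the same as the paper's, but the crucial computation for \eqref{eq:weightedradialcomparison} is never actually completed, and the one evaluation you do attempt is wrong. After the substitution $\sigma = e^{-u}s^2/2$ one has $s^{2Q-1}\di s = 2^{Q-1}e^{Qu}\sigma^{Q-1}\di\sigma$, so the modular factor $e^{-Qu}$ in your first display cancels the Jacobian's exponential exactly, and the $u$-integral you are left with is $\int_{-r}^r(\cosh r-\cosh u)^{Q-1}\di u$ with \emph{no} exponential factor (your $\beta$ equals $0$). This is not a Beta integral and does not evaluate to a power of $\sinh r$; the claimed density $\sinh^{2Q-1}r$ is incorrect, which is precisely why you then run into the false comparison $\sinh^{2Q-1}r\lesssim r\sinh^Q r$. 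At that point you only assert that ``the correct bookkeeping \dots will show'' that the weight costs a factor $r$ rather than $e^{Qr}$ --- but that assertion is the entire content of the estimate, and it is left unproved. The missing step is elementary once the formula is written correctly: since $\cosh r-\cosh u\le\cosh r-1\le\sinh r$, one gets $\int_{-r}^r(\cosh r-\cosh u)^{Q-1}\di u\le 2r\sinh^{Q-1}r$, so the weighted integral has density $\lesssim r\sinh^{Q}r$ in $r$, which by \eqref{eq:radialdensity} (applied to $s\mapsto f(s)s$) is exactly the density of the right-hand side of \eqref{eq:weightedradialcomparison}. This is how the paper concludes.

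For \eqref{eq:inverseweight} your splitting according to $|z|_N\le 1$ or $|z|_N>1$ works, but the intermediate claim that the second piece contributes $O(r)$ is false: after bounding $(1+w)^{-1}\le|z|_N^{-Q}$ and applying \eqref{eq:radial_N}, the inner integral is $\int_1^{s_{\max}(u)}s^{-1}\di s=\log s_{\max}(u)$ with $s_{\max}(u)^2=2e^u(\cosh r-\cosh u)\lesssim e^{2r}$, so the density in $u$ is of size $r$ (not bounded), and integrating over $u\in(-r,r)$ gives $\sim r^2$, not $r$. Fortunately $r^2$ is exactly the bound the proposition asserts, so this slip does not affect the conclusion; the paper obtains the same $r\cdot r$ count directly, without splitting in $|z|_N$.
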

\begin{proof}
A simple modification of the proof of Proposition \ref{prp:radialdensity} gives the following integral formula:
\begin{multline}\label{eq:integralweighted}
\int_G m(x) f(|x|_\dist) \, w(x) \di \mu(x) \\
= 2^{Q-1} V_N Q \int_{0}^\infty f(r) \, \sinh {r} \int_{-r}^r (\cosh r - \cosh u)^{Q-1} \di u  \di r.
\end{multline}
Since $\int_{-r}^r (\cosh r - \cosh u)^{Q-1} \di u \lesssim r \sinh^{Q-1} r$, the estimate \eqref{eq:weightedradialcomparison} follows by comparison of \eqref{eq:radialdensity} and \eqref{eq:integralweighted}.

As for \eqref{eq:inverseweight}, this is clear by \eqref{eq:volumeasymptotics} in the case $r \leq 1$. If instead $r \geq 1$, then
\[\begin{split}
&\int_{B_\dist(0,r)} (1+w)^{-1} \di \mu \\
&= V_N Q\int_{-\infty}^\infty \int_0^\infty \chi_{[0,r)}\big(\arccosh(\cosh u + e^{-u} s^2/2)\big) \,\frac{s^{Q-1}}{1+s^Q} \di s \di u \\
&\sim \int_{-r}^{r} \int_0^{2e^u (\cosh r - \cosh u)} \,\frac{s^{Q/2-1}}{1+s^{Q/2}}\di s \di u\\
&\lesssim \int_{-r}^{r} \int_0^{2e^{2r}} \,\frac{1}{1+s}\di s \di u \sim r^2
\end{split}\]
and we are done.
\end{proof}

\section{\CZ\ theory}\label{s:CZ}

\subsection{Abstract \CZ\ theory}\label{subs:abstractCZ}

It is well known that in spaces of homogeneous type integrable functions admit a \CZ\ decomposition and that in this context the classical \CZ\ theory for singular integrals and the theory of Hardy and $BMO$ spaces \cite{S} can be generalized \cite{CW,CW2}. However, because of exponential volume growth, the group $G$ under consideration is not a space of homogeneous type and a further generalization of the \CZ\ theory is necessary. This generalization was introduced by Hebisch and Steger \cite{HS} and further developed by Vallarino \cite{V1}. Here we summarize some of the results of this theory that will be used in the sequel.

\begin{defi}\label{dfn:CZdef}
A \emph{CZ-space} is a metric measure space $(X,d,\mu)$ such that there exist a positive constant $\kappa_0$ and a family $\mathcal{R}$ of measurable subsets of $X$ with the following properties: for all $R \in \mathcal{R}$̧, there exist $x \in X$ and $r > 0$ such that
\begin{enumerate}[label=(\roman*)]
\item\label{en:CZadm1} $R \subset B(x,\kappa_0 r)$,
\item\label{en:CZadm2} $\mu(R^*) \leq \kappa_0 \mu(R)$, where $R^* = \{x \in X \tc d(x,R) < r\}$;
\end{enumerate}
moreover, for all $f \in L^1(X)$ and for all $\alpha>\kappa _0~\frac{\|f\|_1}{\mu(X)}~$ ($\alpha >0~{\rm if}~\mu(X)=\infty$) there exists a decomposition $f=g+\sum_{i\in \NN}b_i$ and sets $R_i \in \mathcal{R}$ such that
\begin{enumerate}[label=(\roman*),resume]
\item\label{en:CZdec1} $\|g\|_\infty \leq \kappa_0 \alpha$,
\item\label{en:CZdec2} $\supp b_i \subset R_i$ and $\int b_i \di\mu=0$ for all $i\in \NN$,
\item\label{en:CZdec3} $\sum_i \mu(R_i)\leq \kappa_0\,\frac{\|f\|_1}{\alpha}$,
\item\label{en:CZdec4} $\sum_i {\|b_i\|_1}\leq \kappa_0\,\|f\|_{1}$.
\end{enumerate}
\end{defi}

The constant $\kappa_0$ is called the \emph{CZ-constant} of $(X,d,\mu)$. A decomposition $f=g+\sum_{i\in \NN}b_i$ which has properties \ref{en:CZdec1}-\ref{en:CZdec4} of Definition \ref{dfn:CZdef} is said to be a \emph{\CZ\ decomposition} of $f$ at height $\alpha$. The elements of the family $\mathcal{R}$ are called \emph{admissible sets} and, for each $R \in \mathcal{R}$, the point $x \in X$ and the number $r > 0$ satisfying properties \ref{en:CZadm1}-\ref{en:CZadm2} of Definition \ref{dfn:CZdef} are called the \emph{center} and the \emph{radius} of $R$ respectively.

Note that the above definition of CZ-space is more restrictive than the definition of ``\CZ\ space'' given by Hebisch and Steger in \cite{HS}. Hence the following boundedness theorem for a class of linear operators on CZ-spaces is a consequence of \cite[Theorem 2.1]{HS}.

\begin{teo}\label{thm:Teolim}
Let $(X,d,\mu)$ be a CZ-space.
Let $T$ be a linear operator bounded on $L^2(X)$ such that $T=\sum_{j\in \ZZ}T_j$, where
\begin{enumerate}[label=(\roman*)]
\item the series converges in the strong topology of operators on $L^2(X)$;
\item every $T_j$ is an integral operator with kernel $K_j$;
\item there exist positive constants $b,B,\varepsilon$ and $c>1$ such that
\begin{gather*}
\int_X|K_j(x,y)|\,\big(1+c^j d(x,y)\big)^{\varepsilon}\,{\rm{d}}\mu (x) \leq B\qquad\forall y\in X,\\
\int_X|K_j(x,y)-K_j(x,z)| \,{\rm{d}}\mu (x) \leq B\,\big(c^j d(y,z)\big)^b\qquad\forall y,z\in X.
\end{gather*}
\end{enumerate}
Then $T$ extends from $L^1(X)\cap L^2(X)$ to an operator of weak type $(1,1)$ and bounded on $L^p(X)$, for $1<p\leq 2$.
\end{teo}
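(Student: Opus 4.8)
The plan is to reduce the statement to the singular integral theorem of Hebisch--Steger \cite[Theorem 2.1]{HS} by verifying that their hypotheses follow from ours. The main point is that our Definition \ref{dfn:CZdef} of a CZ-space is stronger than the notion of ``\CZ\ space'' used in \cite{HS}: we are assuming a fixed CZ-constant $\kappa_0$ controlling simultaneously the geometric properties of the admissible sets \ref{en:CZadm1}--\ref{en:CZadm2} and all four quantitative bounds \ref{en:CZdec1}--\ref{en:CZdec4} of the \CZ\ decomposition, uniformly over the height $\alpha$ (in the range dictated by $\mu(X)$). In \cite{HS} the existence of such a decomposition, together with the doubling-type estimate $\mu(R^*) \leq \kappa_0 \mu(R)$ on the enlarged sets, is precisely what is needed to run the Calder\'on--Zygmund argument. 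So the first step is simply to observe that $(X,d,\mu)$ with the family $\mathcal{R}$ is a ``\CZ\ space'' in the sense of \cite{HS}: given $f \in L^1(X) \cap L^2(X)$ and $\alpha$ above the admissible threshold, apply the decomposition $f = g + \sum_i b_i$ provided by Definition \ref{dfn:CZdef}, which yields a ``good'' part $g$ with $\|g\|_\infty \lesssim \alpha$ (and $\|g\|_1 \lesssim \|f\|_1$, by \ref{en:CZdec4} and $f = g + \sum b_i$, whence $\|g\|_2^2 \lesssim \alpha \|f\|_1$), and ``bad'' parts $b_i$ supported in admissible sets $R_i$, with cancellation, satisfying the packing condition \ref{en:CZdec3} and the $L^1$-bound \ref{en:CZdec4}.

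The second step is the standard splitting of $Tf = Tg + \sum_i T b_i$. For the good part one uses $L^2$-boundedness of $T$ and Chebyshev: $\mu(\{|Tg| > \alpha/2\}) \lesssim \alpha^{-2} \|Tg\|_2^2 \lesssim \alpha^{-2} \|T\|_{2\to 2}^2 \|g\|_2^2 \lesssim \alpha^{-1} \|f\|_1$. For the bad part, one fixes for each $i$ the center $x_i$ and radius $r_i$ of $R_i$, sets $R_i^* = \{x : d(x,R_i) < r_i\}$, and estimates $\mu(\bigcup_i R_i^*) \leq \sum_i \mu(R_i^*) \leq \kappa_0 \sum_i \mu(R_i) \leq \kappa_0^2 \alpha^{-1} \|f\|_1$ using \ref{en:CZadm2} and \ref{en:CZdec3}; it therefore remains to bound $\mu(\{x \notin \bigcup_i R_i^* : |Tb_i(x)| \text{ summed} > \alpha/2\})$, for which Chebyshev in $L^1$ reduces matters to showing $\sum_i \int_{X \setminus R_i^*} |Tb_i(x)| \di\mu(x) \lesssim \|f\|_1$. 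Here one writes $T b_i = \sum_j T_j b_i$ and exploits the mean-zero property $\int b_i = 0$ to replace $K_j(x,y)$ by $K_j(x,y) - K_j(x, y_i')$ for a suitable reference point; combined with the two kernel estimates in hypothesis (iii) — the weighted integrability bound and the H\"older-type regularity bound, with their geometric decay in $c^j$ — one sums the geometric series in $j$ (splitting according to whether $c^j$ times the relevant distance is $\lesssim 1$ or not, as in \cite{HS}) against $\sum_i \|b_i\|_1 \leq \kappa_0 \|f\|_1$. This yields the weak type $(1,1)$ bound; the $L^p$ bound for $1 < p \leq 2$ then follows by Marcinkiewicz interpolation with the $L^2$ bound.

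The genuinely delicate part of the argument is the bad-part estimate $\sum_i \int_{X\setminus R_i^*} |T_j b_i| \di\mu \lesssim (\text{geometrically decaying in } |j|)\,\|b_i\|_1$, because it must be done uniformly in $j$ and then summed over $\ZZ$; this is exactly where hypothesis (iii) is tailored so that the weighted bound handles the ``$j$ large'' regime (short-range kernels, where one integrates $|K_j|$ directly over $X \setminus R_i^*$ using the extra factor $(1 + c^j d(x,y))^\varepsilon$ to gain decay from $c^j d(x,y) \gtrsim c^j r_i$) while the regularity bound handles the ``$j$ small'' regime (using cancellation and that $d(y,z) \lesssim r_i$ for $y,z \in R_i$, so $(c^j d(y,z))^b$ is the small quantity). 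Since all of this is carried out in \cite[Theorem 2.1]{HS} under hypotheses implied by ours, the proof amounts to the verification that a CZ-space in the sense of Definition \ref{dfn:CZdef} is a ``\CZ\ space'' in the sense of \cite{HS} and an appeal to that theorem; we record this reduction and refer to \cite{HS} for the details of the singular integral estimate itself.
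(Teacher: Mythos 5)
Your proposal is correct and follows essentially the same route as the paper, which proves this theorem simply by observing that Definition \ref{dfn:CZdef} is more restrictive than the notion of ``\CZ\ space'' in \cite{HS} and then invoking \cite[Theorem 2.1]{HS}. Your additional sketch of the good/bad splitting and of how the two kernel bounds in hypothesis (iii) are used in the two regimes of $j$ is an accurate account of the argument carried out in \cite{HS}, so nothing is missing.
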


In \cite{V1} it was noticed that if a CZ-space satisfies an additional condition, then one can develop an $H^1$-$BMO$ theory on it.

\begin{defi}
We say that the CZ-space $(X,d,\mu)$ with family of admissible sets $\mathcal{R}$ satisfies condition \condC\ if there exists a subfamily $\mathcal{R}'$ of $\mathcal{R}$ with the following properties:
\begin{enumerate}[label=(\roman*),resume]
\item given $R_1,\,R_2$ in $\mathcal R'$ such that $R_2\cap  R_1\neq \emptyset$, then either $R_1\subset R_2$ or $R_2\subset R_1$;
\item for every set $R$ in $\mathcal R$ there exists a set $R'$ in $\mathcal R'$ which contains $R$.
\end{enumerate}
\end{defi}

Suppose now that $(X,d,\mu)$ is a CZ-space with family of admissible sets $\mathcal{R}$ which satisfies condition \condC. Then we introduce an atomic Hardy space $H^{1}$ and a space of bounded mean oscillation functions on $X$ as follows.

\begin{defi}\label{dfn:atom}
An \emph{atom} is a function $a$ in $L^1(X)$ such that
\begin{enumerate}[label=(\roman*)]
\item $a$ is supported in an admissible set $R \in \mathcal{R}$;
\item $\|a\|_{2}\leq \mu({R})^{-1/2}\,;$
\item $\int_S a\di\mu =0$\,.
\end{enumerate}
\end{defi}

\begin{defi}\label{dfn:Hardy}
The Hardy space $H^{1}(X)$ is the space of all functions $f$ in $L^1(X)$ such that $f=\sum_j \lambda_j\, a_j$, where the $a_j$ are atoms and $\lambda _j$ are complex numbers such that $\sum _j |\lambda _j|<\infty$. It is a Banach space with the norm
\[
\|f\|_{H^{1}} = \inf \left\{ \sum_j|\lambda_j| \tc f=\sum_j \lambda_j \,a_j, \,\, a_j \text{ atoms}, \, \lambda_j \in \CC\right\}.
\]
By $H^{1}_{\rm{fin}}(X)$ we denote the subspace of $H^{1}(X)$ of finite linear combinations of atoms.
\end{defi}

\begin{defi}\label{dfn:BMO}
The space $\mathcal{B}\mathcal{M}\mathcal{O}(X)$ is the space of all functions $f$ in $L^2_{\loc}(X)$ such that
\[
\sup_{R\in \mathcal{R}} \left(\frac{1}{\mu(R)}\int_R|f-f_R|^2\di\mu \right)^{1/2}<\infty,
\]
where $f_R=\frac{1}{\mu(R)}\int_Rf\di \mu$. The space $BMO(X)$ is the quotient of $\mathcal{B}\mathcal{M}\mathcal{O}(X)$ modulo constant functions. It is a Banach space with the norm
\[
\|f\|_{BMO}=\sup\left\{\left(\frac{1}{\mu(R)}\int_R|f-f_R|^2\di\mu \right)^{1/2}\,:~R\in\mathcal R \right\}.
\]
\end{defi}

For more details on the spaces $H^1(X)$ and $BMO(X)$ we refer the reader to \cite{V1}. In particular, the space $BMO(X)$ can be identified with the the dual of $H^{1}(X)$ \cite[Theorem 3.9]{V1}.

\begin{prop}\label{prp:H1BMOduality}
The following hold.
\begin{enumerate}[label=(\roman*)]
\item For any $g$ in $BMO(X)$ the functional $\Lambda$ defined on $H^{1}_{\rm{fin}}(X)$ by
\[
\Lambda(f)=\int f\,g\di\mu\qquad \forall f\in H^{1}_{\rm{fin}}(X) ,
\]
extends to a bounded functional on $H^{1}(X)$. Furthermore, there exists a constant $C$ such that $\|\Lambda\|_{(H^{1}(X))^*}\leq C\,\|f\|_{BMO}$.
\item For any bounded linear functional $\Lambda$ on $H^{1}(X)$ there exists a function $g$ in $BMO(X)$ such that $\Lambda(f)=\int g\,f\di\mu$ for all $f$ in $ H^{1}_{\rm{fin}}(X)$ and $\|g \|_{BMO}\leq C\,\|\Lambda\|_{(H^{1}(X))^*}$, with $C$ independent of $g$ and $\Lambda$.
\end{enumerate}
\end{prop}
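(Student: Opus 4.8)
The plan is to establish the two directions of the $H^{1}$--$BMO$ duality separately, following the classical scheme of Fefferman--Stein and Coifman--Weiss adapted to the present abstract framework; in fact the statement coincides with \cite[Theorem 3.9]{V1}, so one may simply invoke that result, but we outline the argument for completeness. Throughout one uses that $H^{1}_{\rm{fin}}(X)$ is dense in $H^{1}(X)$ by construction, that every admissible set has finite measure, and that the orthogonal complement inside $L^{2}(R)$ of the mean-zero subspace $L^{2}_{0}(R) = \{ f\in L^{2}(X) \tc \supp f\subset R,\ \int f\di\mu=0 \}$ consists of the constant functions.

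For part (i), fix $g\in BMO(X)$ and let $a$ be an atom supported in an admissible set $R$. Since $\int a\di\mu=0$ we may subtract the average $g_{R}$ and apply the Cauchy--Schwarz inequality to get $\bigl|\int a\,g\di\mu\bigr| = \bigl|\int_{R} a\,(g-g_{R})\di\mu\bigr| \le \|a\|_{2}\,\|g-g_{R}\|_{L^{2}(R)} \le \mu(R)^{-1/2}\cdot\mu(R)^{1/2}\|g\|_{BMO} = \|g\|_{BMO}$. Hence for $f=\sum_{j}\lambda_{j}a_{j}\in H^{1}_{\rm{fin}}(X)$ one has $|\Lambda(f)|\le \|g\|_{BMO}\sum_{j}|\lambda_{j}|$; taking the infimum over all atomic representations of $f$ yields $|\Lambda(f)|\le \|g\|_{BMO}\,\|f\|_{H^{1}}$, and by density $\Lambda$ extends to a bounded functional on $H^{1}(X)$ with the asserted norm bound.

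For part (ii), fix a bounded functional $\Lambda$ on $H^{1}(X)$. For each admissible set $R$, normalizing a nonzero $f\in L^{2}_{0}(R)$ to an atom shows $\|f\|_{H^{1}}\le\mu(R)^{1/2}\|f\|_{2}$, so $\Lambda$ restricts to a bounded linear functional on the Hilbert space $L^{2}_{0}(R)$ of norm at most $\|\Lambda\|\,\mu(R)^{1/2}$; by the Riesz representation theorem there is a unique $g_{R}\in L^{2}_{0}(R)$ with $\Lambda(f)=\int f\,g_{R}\di\mu$ for all $f\in L^{2}_{0}(R)$ and $\|g_{R}\|_{L^{2}(R)}\le\|\Lambda\|\,\mu(R)^{1/2}$. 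If $R\subset S$ are admissible, then $g_{R}$ and $g_{S}|_{R}$ induce the same functional on $L^{2}_{0}(R)$, hence differ by a constant on $R$. Now condition \condC\ enters: choosing an increasing sequence $R_{1}\subset R_{2}\subset\cdots$ in $\mathcal{R}'$ with $\bigcup_{n}R_{n}=X$, the functions $g_{R_{n}}$ agree on overlaps after subtracting suitable telescoping constants and therefore patch to a function $g\in L^{2}_{\loc}(X)$, well defined modulo constants. For any admissible set $R$, which by condition \condC\ is contained in some $R_{n}$, the function $g-g_{R}$ lies in $L^{2}_{0}(R)$ and represents $\Lambda|_{L^{2}_{0}(R)}$, so $\|g-g_{R}\|_{L^{2}(R)}\le\|\Lambda\|\,\mu(R)^{1/2}$, that is $\|g\|_{BMO}\le\|\Lambda\|$; moreover every atom lies in some $L^{2}_{0}(R_{n})$, so linearity gives $\Lambda(f)=\int f\,g\di\mu$ for all $f\in H^{1}_{\rm{fin}}(X)$.

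The routine direction is part (i); the main point is the patching argument in part (ii), where condition \condC\ is exactly what allows one to assemble the locally defined Riesz representatives $g_{R}$ into a single $BMO$ function, since without a nested cofinal subfamily of admissible sets the local pieces cannot be glued consistently. The remaining care concerns the density/extension step in part (i) and the measurability and finiteness issues, all of which are handled as in \cite[Section~3]{V1}.
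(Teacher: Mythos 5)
The paper does not actually prove this proposition: it is quoted verbatim from \cite[Theorem 3.9]{V1}, and your outline is essentially the standard Fefferman--Stein/Coifman--Weiss duality argument that the cited source carries out in this abstract setting. Part (i) is fine as far as it goes (the only point you correctly defer to \cite{V1} is the justification that $\int f\,g\di\mu=\sum_j\lambda_j\int a_j\,g\di\mu$ for \emph{infinite} atomic decompositions of an $f\in H^1_{\rm fin}(X)$, which is needed before taking the infimum defining $\|f\|_{H^1}$, since $g$ is only locally square-integrable).

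There is, however, one step in part (ii) that does not follow from condition \condC\ as stated: you assert the existence of an increasing sequence $R_1\subset R_2\subset\cdots$ in $\mathcal{R}'$ with $\bigcup_n R_n=X$. Condition \condC\ only says that $\mathcal{R}'$ is a nested family (any two members are disjoint or comparable) which is cofinal in $\mathcal{R}$; in the paper's concrete construction $\mathcal{R}'=\{Q^k_\alpha\times(-r_k,r_k)\}$ is a \emph{forest}, not a chain --- at each level $k$ the dyadic sets $Q^k_\alpha$ are pairwise disjoint, and two of them need never acquire a common ancestor (already for $N=\RR$ with standard dyadic intervals, $[0,1)$ and $[-1,0)$ do not). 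So a single exhausting chain need not exist. The repair is routine and is what \cite{V1} does in effect: patch the Riesz representatives $g_R$ along each maximal chain of $\mathcal{R}'$ separately, fixing the additive constant arbitrarily on each chain. Since every admissible set is contained in a single element of $\mathcal{R}'$ (hence in a single chain), the oscillation bound $\|g-g_R\|_{L^2(R)}\le\|\Lambda\|\,\mu(R)^{1/2}$ and the identity $\Lambda(a)=\int a\,g\di\mu$ for atoms are unaffected by the relative constants between different chains, and the union of all chains has full measure because the admissible sets do. With this adjustment your argument is complete and agrees with the cited proof.
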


Moreover, the following $H^1$-$L^1$ boundedness result holds for singular integral operators on CZ-spaces \cite[Theorem 3.10]{V1}.
\begin{teo}\label{thm:TeolimH1}
Let $(X,d,\mu)$ be a CZ-space which satisfies condition \condC. Let $T$ be a linear operator which satisfies the hypotheses of Theorem \ref{thm:Teolim}. Then $T$ is bounded from $H^{1}(X)$ to $L^1(X)$.
\end{teo}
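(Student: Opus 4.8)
The plan is to reduce the statement to a single uniform estimate, namely $\|Ta\|_1 \le C$ for every atom $a$, and then to recover the $H^1(X)\to L^1(X)$ bound by a soft limiting argument. The uniform estimate on atoms runs in close parallel to the weak type $(1,1)$ bound already packaged in Theorem~\ref{thm:Teolim}: one splits space into the dilate of the supporting admissible set and its complement, using $L^2$-boundedness near the set and the two kernel estimates of the hypotheses of Theorem~\ref{thm:Teolim} away from it. Condition \condC\ intervenes only to ensure that the atomic space $H^1(X)$ of Definition~\ref{dfn:Hardy} is the ``right'' space with the properties recalled above; it plays no active role in the estimates.

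\emph{Reduction to atoms.} Let $a$ be an atom (Definition~\ref{dfn:atom}) supported in an admissible set $R\in\mathcal R$ with centre $x_R$ and radius $\rho$, so that $R\subset B(x_R,\kappa_0\rho)$, $\mu(R^*)\le\kappa_0\mu(R)$ with $R^*=\{x:d(x,R)<\rho\}$, and $\|a\|_2\le\mu(R)^{-1/2}$, $\int a\di\mu=0$. I would split $\|Ta\|_1=\|Ta\|_{L^1(R^*)}+\|Ta\|_{L^1(X\setminus R^*)}$. For the first term, the Cauchy--Schwarz inequality, the $L^2$-boundedness of $T$, the admissibility inequality $\mu(R^*)\le\kappa_0\mu(R)$, and the normalisation of $a$ give
\[
\|Ta\|_{L^1(R^*)}\le\mu(R^*)^{1/2}\,\|Ta\|_2\le(\kappa_0\mu(R))^{1/2}\,\|T\|_{L^2\to L^2}\,\mu(R)^{-1/2}=\kappa_0^{1/2}\,\|T\|_{L^2\to L^2},
\]
which is independent of $a$. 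For the second term, I fix a point $y_R\in R$ and use the cancellation $\int a\di\mu=0$ to write, for $x\notin R^*$, $Ta(x)=\sum_{j\in\ZZ}\int_X\bigl(K_j(x,y)-K_j(x,y_R)\bigr)a(y)\di\mu(y)$, whence by Tonelli's theorem
\[
\|Ta\|_{L^1(X\setminus R^*)}\le\int_X|a(y)|\sum_{j\in\ZZ}\int_{X\setminus R^*}\bigl|K_j(x,y)-K_j(x,y_R)\bigr|\di\mu(x)\,\di\mu(y).
\]

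\emph{The double sum.} The inner sum over $j$ is the crux of the proof. Choosing $j_0\in\ZZ$ with $c^{-j_0}$ comparable to $\rho$, I split the sum at $j_0$. For the coarse scales $j\le j_0$ I use the Hölder-type kernel estimate of Theorem~\ref{thm:Teolim} together with $d(y,y_R)\le\diam R\le 2\kappa_0\rho$ for $y\in R$, which gives $\int_X|K_j(x,y)-K_j(x,y_R)|\di\mu(x)\lesssim(c^j\rho)^b$, summable over $j\le j_0$ since $b>0$ and $c>1$. For the fine scales $j>j_0$ I bound $|K_j(x,y)-K_j(x,y_R)|\le|K_j(x,y)|+|K_j(x,y_R)|$ and note that $x\notin R^*$ and $y,y_R\in R$ force $d(x,y)\ge\rho$ and $d(x,y_R)\ge\rho$; absorbing the factor $(1+c^jd(x,w))^{-\varepsilon}$ then turns the first kernel estimate into $\int_{\{d(x,w)\ge\rho\}}|K_j(x,w)|\di\mu(x)\lesssim(c^j\rho)^{-\varepsilon}$ for $w\in\{y,y_R\}$, summable over $j>j_0$ since $\varepsilon>0$. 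Adding up, the inner sum is $\lesssim 1$ uniformly in $y\in R$, so $\|Ta\|_{L^1(X\setminus R^*)}\lesssim\|a\|_1\le\mu(R)^{1/2}\|a\|_2\le 1$; combined with the local part this yields $\|Ta\|_1\le C$ with $C$ independent of $a$.

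\emph{Passage to $H^1(X)$.} By Cauchy--Schwarz every atom satisfies $\|a\|_1\le\mu(R)^{1/2}\|a\|_2\le 1$. Let $f\in H^1(X)$ and fix a decomposition $f=\sum_k\lambda_k a_k$ with $\sum_k|\lambda_k|<\infty$; the partial sums $f_M=\sum_{k\le M}\lambda_k a_k$ converge to $f$ in $L^1(X)$ and lie in $H^1_{\mathrm{fin}}(X)\subset L^1(X)\cap L^2(X)$, where $T$ is defined. By the uniform atomic estimate, $\|Tf_M-Tf_{M'}\|_1\le C\sum_{k>\min(M,M')}|\lambda_k|\to 0$, so $(Tf_M)_M$ is Cauchy in $L^1(X)$ and converges to some $g\in L^1(X)$ with $\|g\|_1\le C\sum_k|\lambda_k|$. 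Since $T$ extends to a bounded operator $L^1(X)\to L^{1,\infty}(X)$ by Theorem~\ref{thm:Teolim} and $f_M\to f$ in $L^1(X)$, we also have $Tf_M\to Tf$ in $L^{1,\infty}(X)$; as $Tf_M\to g$ in $L^{1,\infty}(X)$ as well, we conclude $Tf=g\in L^1(X)$ and $\|Tf\|_1\le C\sum_k|\lambda_k|$. Taking the infimum over all atomic decompositions of $f$ gives $\|Tf\|_1\le C\|f\|_{H^1}$. The only genuinely delicate step is the double sum over $j$: one must split it at the scale set by the radius of the supporting admissible set and balance the Hölder kernel estimate against the $L^1$-with-decay kernel estimate. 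The uniformity of the resulting bound over all atoms is precisely what the $L^2$-normalisation in Definition~\ref{dfn:atom} provides; the remaining steps are routine functional analysis.
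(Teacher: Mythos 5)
Your argument is correct and is essentially the argument by which the cited reference \cite[Theorem 3.10]{V1} proves this statement (the paper itself only quotes the result): one shows $\|Ta\|_1\le C$ uniformly over atoms by splitting into $R^*$ and its complement, using $L^2$-boundedness and $\mu(R^*)\le\kappa_0\mu(R)$ near the support, and balancing the H\"older-type kernel estimate against the weighted $L^1$ kernel estimate across the scale $c^{-j_0}\sim\rho$ away from it, before passing to general $f\in H^1(X)$ by the weak type $(1,1)$ extension. The only point worth a sentence more in a final write-up is the pointwise identification $Ta(x)=\sum_j\int\bigl(K_j(x,y)-K_j(x,y_R)\bigr)a(y)\di\mu(y)$ a.e.\ on $X\setminus R^*$, which follows from the $L^2$ strong convergence of $\sum_j T_j$ together with the absolute convergence in $L^1(X\setminus R^*)$ that your estimates provide.
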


\subsection{\CZ\ theory on \texorpdfstring{$(G,\varrho,\mu)$}{G}}\label{subs:CZG}

We shall prove that the space $(G,\varrho,\mu)$ is a CZ-space in the sense defined in the previous subsection. This fact was already announced and proved by Hebisch in \cite{HEB2} in a more general class of amenable Lie groups, including the groups we are considering here. However, for these groups the construction of the \CZ\ decomposition becomes more transparent than the one given in \cite{HEB2} and we think that it is worthwhile to see the explicit construction in our setting. Moreover this construction allows us to show that the CZ-space $(G,\varrho,\mu)$ satisfies condition \condC\ and consequently a theory of Hardy spaces can be developed on $G$.

The difficulty in the construction consists in the definition of a suitable family $\mathcal{R}$ of admissible sets on $G$. We cannot use balls as in the classical case, because their measure increases exponentially and condition \ref{en:CZadm2} of Definition \ref{dfn:CZdef} would not be satisfied. To define admissible sets we adapt to the sub-Riemannian distance the ideas of \cite{HS} and \cite{V2}.

Christ \cite[Theorem 11]{C} proved the existence of a family of dyadic sets in a space of homogeneous type, which can be formulated for the stratified group $N$ as follows.
\begin{teo}\label{thm:dyadic}
There exist constants $\rd,C_N>1$, an integer $J \geq 2$ and a collection of Borel subsets $Q_{\alpha}^k \subset N$ and points $n_\alpha^k \in N$, where $k \in \ZZ$, $\alpha \in I_k$ and $I_k$ is a countable index set,
such that, for all $k \in \ZZ$, the following hold:
\begin{enumerate}[label=(\roman*)]
\item\label{en:dyadic1} $|N-\bigcup_{\alpha\in I_k}Q_{\alpha}^k|=0$;
\item\label{en:dyadic2} $B_N(n_{\alpha}^k,C_N^{-1}\,\rd^k)\subset Q_{\alpha}^k\subset B_N(n_{\alpha}^k,C_N\,\rd^k)$ for all $\alpha \in I_k$;
\item\label{en:dyadic3} $Q_{\alpha}^k \cap Q_{\beta}^k =\emptyset$ for all $\alpha,\beta \in I_k$ with $\alpha \neq \beta$;
\item\label{en:dyadic4} for all $\alpha \in I_k$, $Q_{\alpha}^k$ has at most $J$ subsets of the form $Q_{\beta}^{k-1}$ for $\beta \in I_{k-1}$;
\item\label{en:dyadic5} for all $\ell\leq k$ and $\beta \in I_{\ell}$ there is a unique $\alpha \in I_k$ such that $Q_{\beta}^{\ell}\subset Q_{\alpha}^k $;
\item\label{en:dyadic6} for all $\ell\leq k$, $\alpha \in I_k$ and $\beta \in I_\ell$, either $Q_{\alpha}^k\cap Q_{\beta}^{\ell}=\emptyset$ or $Q_{\beta}^{\ell}\subset Q_{\alpha}^k $.
\end{enumerate}
\end{teo}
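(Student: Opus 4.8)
The plan is to obtain Theorem \ref{thm:dyadic} as a specialization of Christ's general dyadic-cube construction \cite[Theorem 11]{C}, supplemented by an elementary packing argument for the bound on the number of children.

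First I would check that $(N,\dist^N,|\cdot|)$ is a space of homogeneous type. By the Chow--Rashevskii theorem $\dist^N$ is a genuine distance inducing the manifold topology on $N$; Lebesgue measure on $N$ is a Radon measure, positive and finite on balls; and it is doubling since $|B_N(z,r)| = r^Q\,|B_N(0_N,1)|$ uniformly in $z\in N$, so that $|B_N(z,2r)| = 2^Q\,|B_N(z,r)|$. Christ's theorem then yields, for each generation $k$, a countable index set $I_k$ together with Borel ``cubes'' $Q_\alpha^k$ and ``centres'' $n_\alpha^k$ ($\alpha\in I_k$) satisfying the almost-covering property \ref{en:dyadic1}, the inner/outer ball inclusions \ref{en:dyadic2}, the disjointness \ref{en:dyadic3}, and the consistency and nesting properties \ref{en:dyadic5}--\ref{en:dyadic6}; his theorem produces in addition a ``small boundary'' estimate, which plays no role here. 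One is free to fix $\rd>1$ (taken large enough in terms of the doubling constant $2^Q$), after which $C_N>1$ depends only on $N$ and $\rd$. A point of bookkeeping: Christ states his result with cubes of generation $k$ of size $\delta^k$ for a fixed $\delta\in(0,1)$, getting finer as $k\to+\infty$, so one applies his theorem with $\delta=\rd^{-1}$ and flips the sign of the generation index, so that in our convention $Q_\alpha^k$ has size $\sim\rd^k$ and the cubes get finer as $k\to-\infty$, consistently with $\rd^\ell\le\rd^k$ for $\ell\le k$ in \ref{en:dyadic5}.

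It then remains only to establish \ref{en:dyadic4}. Here I would argue as follows. By \ref{en:dyadic1}, \ref{en:dyadic5} and \ref{en:dyadic6}, each $Q_\alpha^k$ coincides, up to a null set, with the disjoint union of those $Q_\beta^{k-1}$ ($\beta\in I_{k-1}$) that it contains. By \ref{en:dyadic2} each such child contains the ball $B_N(n_\beta^{k-1},C_N^{-1}\rd^{k-1})$; by \ref{en:dyadic3} these balls, over distinct children, are pairwise disjoint; and they all lie inside $B_N(n_\alpha^k,C_N\rd^k)$, since $Q_\beta^{k-1}\subset Q_\alpha^k\subset B_N(n_\alpha^k,C_N\rd^k)$. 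Comparing volumes via $|B_N(z,r)| = r^Q\,|B_N(0_N,1)|$, the number of children of $Q_\alpha^k$ is at most $(C_N\rd^k)^Q/(C_N^{-1}\rd^{k-1})^Q = (C_N^2\,\rd)^Q$, a finite constant depending only on $N$ and $\rd$. Taking $J$ to be any integer with $J\ge\max\{2,(C_N^2\rd)^Q\}$ then gives \ref{en:dyadic4} and also guarantees $J\ge 2$.

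I do not expect a genuine obstacle: the theorem is in essence a restatement of a classical result, and the only features not immediately read off from \cite[Theorem 11]{C} --- namely \ref{en:dyadic4} and the lower bound $J\ge 2$ --- follow from the exact ($Q$-dimensional) doubling of $N$ by the packing argument above. The one thing requiring a little care is the relabelling of generations already mentioned, so that the direction of refinement and the inequalities appearing in \ref{en:dyadic5}--\ref{en:dyadic6} match the conventions used elsewhere in the paper.
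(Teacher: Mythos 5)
Your proposal is correct and matches the paper's approach: the paper gives no proof of this statement, presenting it simply as the reformulation of Christ's dyadic-cube theorem for the space of homogeneous type $(N,\dist^N,|\cdot|)$, exactly as you do. The one point Christ's theorem does not state explicitly is the bound (iv) on the number of children, and your packing argument via the disjoint inner balls and the exact volume formula $|B_N(z,r)|=r^Q|B_N(0_N,1)|$ correctly supplies it (indeed the resulting constant $(C_N^2\rd)^Q$ is precisely the one the paper later uses in Lemma \ref{lem:tagli}).
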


Let us fix a system of dyadic sets $Q_{\alpha}^k$, points $n_{\alpha}^k$, index sets $I_k$ and constants $\rd,C_N,J$ as in Theorem \ref{thm:dyadic}.
Further let us fix two positive constants $M$ and $r_0$
such that the following conditions are satisfied:
\begin{align}
\label{eq:Mr0_A} 1 &< r_0<2\log 2 \\
\label{eq:Mr0_B} M &>1 \\
\label{eq:Mr0_C} e^{r_0}e^{2M}r_0 &\leq e^{2Mr_0} \\
\label{eq:Mr0_D} 6M &>\log\rd-\log 2+\frac{r_0}{2}\\
\label{eq:Mr0_E} \rd e^{4Mr_0} &< 2e^{8M} \inf\{ re^{-r/2}:\, r_0<r\leq 2r_0\}  \\
\label{eq:Mr0_F} \rd &< 4e^{(4M-1)r_0}\,.
\end{align}
We define admissible sets as the product of dyadic sets in $N$ and intervals in $A$ as follows.
\begin{defi}\label{dfn:admissibility}
An \emph{admissible set} in $G$ is a set of the form
\[
Q_{\alpha}^k\times (u_0-r,u_0+r),
\]
where $k \in \ZZ$, $\alpha \in I_k$, $u_0\in\RR$, $r>0$ are such that
\begin{equation}\label{eq:adm}
\begin{aligned}
r \, e^{2M}\,e^{u_0}&\leq \rd^k < 4 \, r \, e^{8M} \,e^{u_0} &&\text{if } 0<r\leq r_0,\\
e^{2Mr}\,e^{u_0} &\leq \rd^k < 4 \, e^{8Mr} \,e^{u_0} &&\text{if } r>r_0.
\end{aligned}
\end{equation}
We shall call \emph{small admissible set} an admissible set corresponding to a parameter $r\in (0,r_0]$ and \emph{big admissible set} an admissible set corresponding to a parameter $r\in(r_0,\infty)$.
We denote by $\mathcal R$ the family of all admissible sets in $G$.
\end{defi}

Proposition \ref{prp:distance} allows us to obtain precise relations between balls and ``rectangles'' on $G$, which will be important in the following.

\begin{prop}\label{prp:balls}
There exists a positive constant $C_1$ such that
\begin{enumerate}[label=(\roman*)]
\item\label{en:balls1} $B_N\big(0_N, 4C_N  e^{8M} \,  r  \big)\times (-r, r)\subset B_{\varrho}(0_G,C_1 r)$ for every $r\in (0,\infty)$;
\item\label{en:balls2} $B_N\big(0_N, 4C_N  e^{8Mr}  \big)\times (-r, r)\subset B_{\varrho}(0_G,C_1 r)$ for every $r\in (r_0,\infty)$;
\item\label{en:balls3} $B_{\varrho}(0_G,r)\subset B_N\big(0_N, e^r  \big)\times (-r,r)$ for every $r\in (0,\infty)$;
\item\label{en:balls4} $B_{\varrho}(0_G,r)\subset B_N\big(0_N, C_1 r  \big)\times (-r,r)$ for every $r\in (0,r_0]$.
\end{enumerate}
\end{prop}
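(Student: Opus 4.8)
The plan is to use the explicit distance formula \eqref{eq:distanza} with $(z_0,u_0)=0_G$, which reads
\[
\dist((0_N,0),(z,u)) = \arccosh\!\left(\cosh u + |z|_N^2/2 \right),
\]
and to read off the containments by comparing the size of $\dist((0_N,0),(z,u))$ with the "box parameters" $|z|_N$ and $|u|$. For the \emph{inclusions of a box into a ball} (parts \ref{en:balls1} and \ref{en:balls2}), I would take $(z,u)$ with $|u|<r$ and $|z|_N < 4C_N e^{8M} r$ (resp.\ $|z|_N < 4C_N e^{8Mr}$ for $r>r_0$), and bound $\arccosh(\cosh u + |z|_N^2/2)$ from above. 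Using $\cosh u \le \cosh r$ and $\arccosh(\cosh r + t) \le r + \sqrt{2t}$ for $t\ge 0$ (which follows since $\cosh(r+\sqrt{2t}) \ge \cosh r + \sinh r\,\sqrt{2t}\cdot\text{(something)} + t \ge \cosh r + t$ for $r$ bounded below away from $0$, and an even simpler estimate $\arccosh(y) \le \log(2y)$ for $y\ge 1$ when $r$ is large), one obtains $\dist((0_N,0),(z,u)) \lesssim r + e^{4M} r$ in the first regime and $\lesssim r + e^{4Mr}$… — here I must be careful: a crude $\arccosh(\cosh u + |z|_N^2/2) \le \log(2\cosh u + |z|_N^2) \le \log(4 e^r + 16 C_N^2 e^{16Mr})$, and for $r>r_0>1$ and $M>1$ this is $\le \log(C e^{16Mr}) = \log C + 16Mr \le C_1 r$ by choosing $C_1$ large (absorbing the additive constant using $r>r_0>1$). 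For $r\le r_0$ (bounded), the box has $|z|_N \lesssim r$ and $|u|\lesssim r$, so $\arccosh(\cosh u + |z|_N^2/2) \le \arccosh(1 + C r^2) \lesssim r$ since $\arccosh(1+s) \sim \sqrt{2s}$ near $s=0$; for $r_0 < r$ not handled by the large-$r$ log bound I simply note $r_0 < r$ is exactly the large regime, so this is fine. Thus parts \ref{en:balls1}, \ref{en:balls2}, \ref{en:balls4} follow with a single constant $C_1$ chosen at the end.

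For the \emph{inclusion of a ball into a box} (parts \ref{en:balls3} and \ref{en:balls4}), suppose $\dist((0_N,0),(z,u)) < r$, i.e.\ $\cosh u + |z|_N^2/2 < \cosh r$. Immediately $\cosh u < \cosh r$ gives $|u| < r$, and $|z|_N^2 < 2(\cosh r - \cosh u) \le 2\cosh r < 2 e^r$, so $|z|_N < \sqrt{2}\, e^{r/2} < e^r$ (valid for all $r>0$ since $\sqrt 2 < e^{r/2}$ fails only for very small $r$ — actually $\sqrt 2 e^{r/2} \le e^r \iff e^{r/2} \ge \sqrt 2 \iff r \ge \log 2$, so for $r < \log 2$ I instead use $|z|_N^2 < 2(\cosh r - 1) \le r^2 \cosh r \le 2 r^2 < e^{2r}$ via $\cosh r - 1 \le r^2/2 \cdot \cosh(r)$… more simply $\cosh r - 1 \le (e^r - 1)^2/2 \le \ldots$); in any case a clean bound $|z|_N < e^r$ holds for all $r>0$ after splitting at $r=\log 2$, giving \ref{en:balls3}. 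For \ref{en:balls4}, when $0 < r \le r_0$ we have $\cosh r - \cosh u \le \cosh r - 1 \le C r^2$ with $C$ depending on $r_0$, so $|z|_N \le C' r$; enlarging $C_1$ if necessary gives $B_\dist(0_G,r) \subset B_N(0_N, C_1 r) \times (-r,r)$.

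The main obstacle is purely bookkeeping: one must choose the \emph{single} constant $C_1$ to work simultaneously in all four statements and across the regimes $r \le \log 2$, $\log 2 < r \le r_0$, $r > r_0$, handling the transition between the $\arccosh(1+s)\sim\sqrt{s}$ behaviour near the origin and the $\arccosh(y)\sim\log(2y)$ behaviour at infinity, and absorbing additive constants into multiplicative ones using $r > r_0 > 1$ in the large regime. No genuinely hard estimate is involved — everything reduces to the elementary inequalities $\sqrt{2s} \le \arccosh(1+s) \le \min(2\sqrt{s},\,\log(2+2s))$ and $\cosh r - 1 \asymp r^2$ for bounded $r$ — but the case analysis must be organized carefully so that the constants chosen in \eqref{eq:Mr0_A}--\eqref{eq:Mr0_F} are not needed here (indeed parts \ref{en:balls1}--\ref{en:balls4} only involve $M$, not $r_0$ beyond its role as a threshold, and not $\rd$). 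I would present the upper bound $\dist \le C_1 r$ for boxes and the lower bound (equivalently, the box containment) for balls as two short lemmatic computations, then collect the constant.
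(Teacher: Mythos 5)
Your strategy is the same as the paper's: specialize the distance formula \eqref{eq:distanza} to $(z_0,u_0)=0_G$ and deduce the four containments from elementary estimates on $\arccosh$ (the paper does exactly this, including the contrapositive form of \ref{en:balls3} and \ref{en:balls4}). However, you have misquoted the formula. With $(z_0,u_0)=0_G$ it reads
\[
\dist\big(0_G,(z,u)\big)=\arccosh\left(\cosh u + e^{-u}\,|z|_N^2/2\right),
\]
not $\arccosh\big(\cosh u+|z|_N^2/2\big)$: the conformal factor $e^{-(u_0+u_1)}$ in \eqref{eq:distanza} does not vanish, and all of your subsequent computations are carried out with the wrong expression.

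The error is recoverable, because in every part one already knows $|u|<r$, so $e^{-u}$ lies between $e^{-r}$ and $e^{r}$. In parts \ref{en:balls1} and \ref{en:balls2} the factor $e^{-u}\le e^{r}$ only multiplies the argument of $\arccosh$ by $e^{r}$, which is bounded for $r\le r_0$ and contributes an extra additive $r$ inside the logarithm for large $r$; this is precisely the factor $e^{r}$ appearing in the paper's estimate $\dist<\arccosh\big(\cosh r+16e^{r}C_N^2e^{16M}r^2/2\big)$. In parts \ref{en:balls3} and \ref{en:balls4} the relevant bound is $e^{-u}\ge e^{-r}$, which is what makes the paper's contrapositive work: $|z|_N\ge e^{r}$ forces $\dist\ge\arccosh\big(1+e^{-r}e^{2r}/2\big)\ge\arccosh\cosh r=r$. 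So the conclusions survive with adjusted constants, but the specific inequalities you wrote (e.g.\ $|z|_N^2<2(\cosh r-\cosh u)$ and the case split at $r=\log 2$) are not the right ones and must be redone with the factor $e^{\pm u}$ reinstated; with the correct formula one in fact gets $|z|_N^2<2e^{u}(\cosh r-\cosh u)\le e^{r}(e^{r}+e^{-r}-2)=(e^{r}-1)^2$ for all $r>0$, so no case split is needed for \ref{en:balls3}. Your observation that conditions \eqref{eq:Mr0_A}--\eqref{eq:Mr0_F} play no role here is correct; the paper's proof likewise uses only $M$ and the threshold $r_0$.
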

\begin{proof}
We first prove \ref{en:balls1}. If $(z,u)\in B_N\big(0_N, 4C_N  e^{8M} \,  r  \big)\times (-r, r)$, then, by formula \eqref{eq:distanza},
\[\begin{split}
\varrho\big((z,u),0_G\big)&< \arccosh \left(\cosh r+\frac{16e^{r}C_N^2e^{16M}r^2   }{2}\right)\\
&\leq \arccosh \cosh (C_1r),
\end{split}\]
for a sufficiently large $C_1$ and for every $r\in (0,\infty)$.

We now prove \ref{en:balls2}. If $(z,u)\in B_N\big(0_N, 4C_N  e^{8Mr} \big)\times (-r, r)$, then, by formula \eqref{eq:distanza},
\[
\varrho\big((z,u),0_G\big) < \arccosh \left(\cosh r+\frac{16e^{r}C_N^2e^{16Mr}}{2}\right)\leq \arccosh \cosh (C_1r),
\]
for a sufficiently large $C_1$ and for every $r\in (r_0,\infty)$.

We now consider any point $(z,u)\in B_{\varrho}(0_G,r)$. By formula \eqref{eq:distanza} it is obvious that $\cosh u < \cosh r$ and then $u\in (-r,r)$. Suppose now that $|z|\geq e^r$. Then
\[
\varrho\big((z,u),0_G\big) \geq \arccosh \left(1+\frac{e^{-r}e^{2r}   }{2}\right)\geq \arccosh \cosh r=r.
\]
Then $|z| < e^r$ and \ref{en:balls3} is proved. Take now any point $(z,u)\in B_{\varrho}(0_G,r)$ and suppose that $|z|\geq C_1\,r$. Then
\[
\varrho\big((z,u),0_G\big) \geq \arccosh \left(1+\frac{e^{-r}C_1^2r^{2}   }{2}\right)\geq \arccosh \cosh r=r,
\]
for every $r\in (0,r_0]$, if $C_1$ is chosen sufficiently large. Then $|z| < C_1r$ and \ref{en:balls4} is proved.
\end{proof}

We now investigate some properties of admissible sets.

\begin{prop}\label{prp:proprieta}
There exists a positive constant $C^*$ such that, for every admissible set $R=Q_{\alpha}^k\times (u_0-r,u_0+r)$, the following hold:
\begin{enumerate}[label=(\roman*)]
\item\label{en:proprieta1} $R\subset B_{\varrho}\big((n_{\alpha}^k, u_0),C_1\, r\big)$, where $C_1$ is the constant which appears in Proposition \ref{prp:balls};
\item\label{en:proprieta2} $\mu\big(R^*\big)\leq C^*\mu\big(R\big)\,,$ where $R^*=\{(z,u)\in G: \varrho\big((z,u),R\big)< r\}$.
\end{enumerate}
\end{prop}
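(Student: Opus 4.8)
The plan is to verify the two properties of admissible sets by direct comparison with balls, using the explicit formula for $\dist$ in Proposition \ref{prp:distance} together with the containments in Proposition \ref{prp:balls} and the defining inequalities \eqref{eq:adm}. For property \ref{en:proprieta1}, by left-invariance of $\dist$ and $\dist^N$ we may translate so that the center $(n_\alpha^k,u_0)$ becomes $0_G$; then $R$ becomes $Q_\alpha^k \cdot (n_\alpha^k)^{-1} \times (-r,r)$, which by Theorem \ref{thm:dyadic}\ref{en:dyadic2} is contained in $B_N(0_N, C_N \rd^k) \times (-r,r)$. In the small case $0 < r \le r_0$, the admissibility bound $\rd^k < 4 r e^{8M} e^{u_0}$ (after the translation $u_0 = 0$) shows this is contained in $B_N(0_N, 4 C_N e^{8M} r) \times (-r,r)$, and Proposition \ref{prp:balls}\ref{en:balls1} gives containment in $B_\dist(0_G, C_1 r)$. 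In the big case $r > r_0$, the bound $\rd^k < 4 e^{8Mr} e^{u_0}$ gives containment in $B_N(0_N, 4 C_N e^{8Mr}) \times (-r,r)$, and Proposition \ref{prp:balls}\ref{en:balls2} finishes it. So property \ref{en:proprieta1} follows with the same constant $C_1$.

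For property \ref{en:proprieta2}, the doubling-type estimate, I would again translate so $u_0 = 0$ and estimate $\mu(R^*)$ from above and $\mu(R)$ from below. Since $\mu$ is the product measure $\di z \di u$, we have $\mu(R) = |Q_\alpha^k| \cdot 2r$, and by Theorem \ref{thm:dyadic}\ref{en:dyadic2} together with the volume homogeneity of $\dist^N$, $|Q_\alpha^k| \sim (\rd^k)^Q$, so $\mu(R) \sim (\rd^k)^Q r$. For the enlarged set $R^* = \{(z,u) : \dist((z,u),R) < r\}$, I would show $R^*$ is contained in a product of a dilated dyadic-type region in $N$ and a slightly longer interval in $A$. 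The key point is that a point $(z,u)$ within $\dist$-distance $r$ of some $(z',u') \in R$ must, by formula \eqref{eq:distanza}, satisfy $\cosh(u-u') \le \cosh r$, hence $|u-u'| < r$, so $u \in (-2r,2r)$; and the $N$-component satisfies $e^{-(u+u')}\dist^N(z,z')^2/2 \le \cosh r - \cosh(u-u') \le \cosh r$, whence $\dist^N(z,z') \lesssim e^{\max(|u|,|u'|)/2} \cdot e^{r/2} \lesssim e^{Cr}$ in the relevant regime. Combined with $\dist^N(z', n_\alpha^k) \le C_N\rd^k$, this bounds $\dist^N(z, n_\alpha^k)$, so $R^* \subset B_N(n_\alpha^k, C'\rd^k) \times (-2r,2r)$ for a suitable $C'$ depending on $M$ and the admissibility constraints; then $\mu(R^*) \lesssim (\rd^k)^Q r$, matching $\mu(R)$ up to a constant. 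The constants $M, r_0$ and the conditions \eqref{eq:Mr0_A}--\eqref{eq:Mr0_F} are designed precisely so that the various exponential factors ($e^{r/2}$, $e^{2Mr}$, $e^{8Mr}$, and $\rd$ itself) balance and the enlargement by $R^*$ only inflates $\rd^k$ by a bounded factor.

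The main obstacle, and the place requiring care, is the big-admissible case ($r > r_0$): there the enlargement $\dist^N(z,z') \lesssim e^{r/2}\cdot(\text{something})$ competes against the admissibility window $e^{2Mr} e^{u_0} \le \rd^k < 4 e^{8Mr} e^{u_0}$, and one must check that after adding the $r$-neighborhood the resulting $N$-ball radius is still comparable to $\rd^k$ (not merely to $\rd^{k+O(1)}$ with an $r$-dependent shift). This is exactly why condition \eqref{eq:Mr0_E}, $\rd e^{4Mr_0} < 2 e^{8M}\inf\{re^{-r/2} : r_0 < r \le 2r_0\}$, and condition \eqref{eq:Mr0_C} are imposed: they ensure that doubling $r$ (from the interval enlargement $(-r,r) \mapsto (-2r,2r)$, or rather from needing an admissible set of radius $\sim 2r$ to contain $R^*$) keeps us inside a controlled band of dyadic scales. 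A clean way to organize this is to prove that $R^*$ is contained in a single admissible set of radius $Cr$ (for a universal $C \ge 1$), invoke property \ref{en:proprieta1}-type reasoning to put that inside a $\dist$-ball, and then use the volume asymptotics \eqref{eq:volumeasymptotics} — but since $\mu$ is a product measure it is in fact cleaner to estimate the two factors separately as above. I expect the proof to be a somewhat lengthy but entirely elementary case analysis driven by \eqref{eq:distanza} and \eqref{eq:adm}, with the inequalities \eqref{eq:Mr0_A}--\eqref{eq:Mr0_F} invoked at the appropriate junctures; no new idea beyond Propositions \ref{prp:distance} and \ref{prp:balls} should be needed.
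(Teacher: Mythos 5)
Your proposal is correct and follows essentially the same route as the paper: part \ref{en:proprieta1} via the admissibility upper bound on $\rd^k$ and Proposition \ref{prp:balls}\ref{en:balls1}/\ref{en:balls2}, and part \ref{en:proprieta2} by trapping $R^*$ inside $B_N(n_\alpha^k, C'\rd^k)\times(u_0-2r,u_0+2r)$ using Proposition \ref{prp:balls}\ref{en:balls3}/\ref{en:balls4} together with the admissibility lower bound, then comparing product measures. The only small correction is that conditions \eqref{eq:Mr0_C} and \eqref{eq:Mr0_E} are not needed here (only \eqref{eq:adm} and $M>1$ enter); those inequalities are reserved for Lemma \ref{lem:tagli}, where one must check that children of admissible sets are admissible.
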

\begin{proof}
\emph{Case $0<r\leq r_0$.} By Theorem \ref{thm:dyadic} and Definition \ref{dfn:admissibility},
\[\begin{split}
R &\subset B_N\big(n^k_{\alpha},4C_N   \, e^{8M} \,e^{u_0}\, r    \big)\times (u_0-r,u_0+r) \\
&=(n^k_{\alpha},u_0)\cdot B_N\big(0_N,4C_N  e^{8M} \,  r   \big)\times (-r, r)\,.
\end{split}\]
By Proposition \ref{prp:balls} $B_N\big(0_N, 4C_N  e^{8M} \,  r  \big)\times (-r, r)\subset B_{\varrho}(0_G,C_1 r)$, which implies \ref{en:proprieta1}.

To prove \ref{en:proprieta2} we remark that $R^*=\bigcup_{(z,u)\in R} B_{\varrho}\big((z,u),r\big)\,.$
By the left-invariance of the metric and Proposition \ref{prp:balls}, for every $(z,u)\in R$,
\[\begin{split}
B_{\varrho}\big((z,u), r\big)&=(z,u)\cdot B_{\varrho}(0_G, r)\\
&\subset (z,u)\cdot B_N\big(0_N,C_1\, r\big)\times (-r,r)\\
&=B_N\big(z,C_1\,e^{u} r\big)\times (u-r,u+r)\\
&\subset B_N\big(n^k_{\alpha},C_1\,e^{u} r +C_N \rd^k  \big)\times (u_0-2r, u_0+2r)\\
&\subset B_N\big(n^k_{\alpha},C\,e^{u_0}  r    \big)\times ( u_0-2r, u_0+2r),
\end{split}\]
where $C = C_1 e^{r_0} + 4C_N e^{8M}$ and we have applied the triangle inequality in $N$ and the admissibility condition. This implies that
\[
\mu(R^*)\lesssim e^{Qu_0} r^Q \,r\sim \rd^{Qk} \,r\sim \mu( R),
\]
which gives \ref{en:proprieta2}.

\bigskip

{\emph{Case $r> r_0$.}} To prove \ref{en:proprieta1} note that by Theorem \ref{thm:dyadic}
\[
R \subset B_N\big(n_{\alpha}^k,C_N\,\rd^k\big)\times (u_0-r,u_0+r)\,,
\]
which is contained in $B_N\big(n_{\alpha}^k, 4C_N e^{8Mr} e^{u_0})\times  (u_0-r,u_0+r)$ by the admissibility condition \eqref{eq:adm}. By the left-invariance of the metric  and Proposition \ref{prp:balls}
\[\begin{split}
R &\subset (n_{\alpha}^k,u_0)\cdot B_N\big(0_N,4C_N\,  e^{8Mr})\times (-r,r) \\
&\subset (n_{\alpha}^k, u_0)\cdot B_{\varrho}(0_G,C_1 \, r)\\
&=B_{\varrho}\big((n_{\alpha}^k, u_0), C_{1}\, r \big).
\end{split}\]
To prove \ref{en:proprieta2} we remark that $R^*=\bigcup_{(z,u)\in R} B_{\varrho}\big((z,u),  r\big)$. By the left-invariance of the metric and Proposition \ref{prp:balls} for every $(z,u)\in R$
\[\begin{split}
B_{\varrho}\big((z,u),  r\big)&=(z,u)\cdot B_{\varrho}(0_G, r)\\
&\subset (z,u)\cdot B_{\varrho}\big(0_N, e^r \big)\times (-r,r)\\
&=B_N\big( z,    e^ue^r\big)\times (u-r,u+r).
\end{split}\]
Using the fact that $(z,u)\in R$ and the admissibility condition on $R$, we see that
\[
(u-r,u+r) \subset(u_0-2r,u_0+2r)
\]
and
\[\begin{split}
B_N\big(z, e^{u}e^r\big)& \subset B_N\big(z,\,e^{u_0+r}e^r\big)\\
&\subset B_N\big(n_{\alpha}^k, e^{u_0+2r}+C_N\,\rd^k\big)\\
&\subset B_N\big(n_{\alpha}^k,(1+C_N)\,\rd^k\big) .
\end{split}\]
Thus
\[
R^*\subset B_N\big(n_{\alpha}^k,(1+C_N)\rd^k\big)\times (u_0-2r,u_0+2r),
\]
and so
\[
\mu\big(R^*\big) \lesssim |B_N\big(n_{\alpha}^k, \,\rd^k\big)|\, r\sim \mu\big(R\big),
\]
as required.
\end{proof}

We now define a way of splitting an admissible set into at most $J$ disjoint admissible subsets, where $J$ is the constant which appears in Theorem \ref{thm:dyadic}.

\begin{defi}
An admissible set $R=Q_{\alpha}^k\times (u_0-r,u_0+r)$ is called \emph{strongly admissible} if \eqref{eq:adm} also holds with $k-1$ in place of $k$, that is, if
\[\begin{aligned}
r \, e^{2M}\,e^{u_0}&\leq \rd^{k-1} < 4 \, r \, e^{8M} \,e^{u_0} &&\text{when } 0<r\leq r_0,\\
e^{2Mr}\,e^{u_0} &\leq \rd^{k-1} < 4 \, e^{8Mr} \,e^{u_0} &&\text{when } r>r_0.
\end{aligned}\]
\end{defi}

Note that the upper bound for $\eta^{k-1}$ in the above inequalities is automatically satisfied because $R$ is admissible and $\eta^{k-1} < \eta^k$; the additional requirement for $R$ to be strongly admissible is the lower bound for $\eta^{k-1}$.

\begin{defi}\label{dfn:children}
For all admissible sets $R=Q_{\alpha}^k\times (u_0-r,u_0+r)$, we define the \emph{children} of $R$ as follows: if $R$ is strongly admissible, then the children of $R$ are all the sets of the form
\[
Q_\beta^{k-1}\times (u_0-r,u_0+r)
\]
where $\beta \in I_{k-1}$ and $Q_\beta^{k-1} \subset Q_\alpha^k$; otherwise the children of $R$ are the sets
\[
Q_{\alpha}^k\times (u_0-r,u_0) \qquad\text{and}\qquad Q_{\alpha}^k\times (u_0,u_0+r).
\]
We denote by $\chil(R)$ the set of the children of $R$.
\end{defi}

\begin{defi}
Let $E$ be a measurable subset of a measure space. A \emph{quasi-partition} of $E$ is an at most countable family of non-negligible, pairwise disjoint measurable subsets of $E$, whose union has full measure in $E$.
\end{defi}

\begin{lem}\label{lem:tagli}
Let $C_2 = \max \{2, (C_N^{2} \rd)^Q\}$. Then, for all admissible sets $R$, the following hold:
\begin{enumerate}[label=(\roman*)]
\item\label{en:tagli1} $R$ has at most $J$ children.
\item\label{en:tagli2} $\chil(R)$ is a quasi-partition of $R$.
\item\label{en:tagli3} $C_2^{-1} \mu( R ) \leq \mu(R') \leq \mu(R)$ for all $R' \in \chil(R)$.
\item\label{en:tagli4} All the children of $R$ are admissible.
\end{enumerate}
\end{lem}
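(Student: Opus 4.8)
The plan is to prove the four properties of Lemma~\ref{lem:tagli} by separately analyzing the two cases in Definition~\ref{dfn:children}, since the structure of the children depends on whether $R$ is strongly admissible or not. Throughout, write $R = Q_\alpha^k \times (u_0-r, u_0+r)$.

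\emph{Case 1: $R$ is strongly admissible.} Here the children are the sets $Q_\beta^{k-1} \times (u_0-r, u_0+r)$ with $\beta \in I_{k-1}$ and $Q_\beta^{k-1} \subset Q_\alpha^k$. Property~\ref{en:tagli1} is immediate from property~\ref{en:dyadic4} of Theorem~\ref{thm:dyadic}. For property~\ref{en:tagli2}, the sets $Q_\beta^{k-1}$ appearing are pairwise disjoint by property~\ref{en:dyadic3}, and their union covers $Q_\alpha^k$ up to a null set: indeed, by property~\ref{en:dyadic6}, every $Q_\beta^{k-1}$ is either disjoint from $Q_\alpha^k$ or contained in it, while property~\ref{en:dyadic1} says the $Q_\beta^{k-1}$ cover $N$ up to measure zero; intersecting with $Q_\alpha^k$ gives the claim (one should also note each child is non-negligible by the lower inclusion in property~\ref{en:dyadic2}). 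For property~\ref{en:tagli4}, the child $Q_\beta^{k-1} \times (u_0-r,u_0+r)$ has the same interval as $R$ but index $k-1$; its admissibility is exactly the strong admissibility hypothesis on $R$ (with $k-1$ in place of $k$), so it holds. For property~\ref{en:tagli3}, note $\mu(R') = |Q_\beta^{k-1}|\cdot 2r$ and $\mu(R) = |Q_\alpha^k| \cdot 2r$, so we must compare $|Q_\beta^{k-1}|$ and $|Q_\alpha^k|$; using $B_N(n_\beta^{k-1}, C_N^{-1}\rd^{k-1}) \subset Q_\beta^{k-1} \subset Q_\alpha^k \subset B_N(n_\alpha^k, C_N \rd^k)$ and the exact volume scaling $|B_N(z_0,\rho)| = \rho^Q V_N$, we get $|Q_\beta^{k-1}| \geq (C_N^{-1}\rd^{k-1})^Q V_N$ and $|Q_\alpha^k| \leq (C_N \rd^k)^Q V_N$, whence $|Q_\alpha^k|/|Q_\beta^{k-1}| \leq (C_N^2 \rd)^Q \leq C_2$; the inequality $\mu(R') \leq \mu(R)$ is clear from $Q_\beta^{k-1} \subset Q_\alpha^k$.

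\emph{Case 2: $R$ is not strongly admissible.} The two children are $Q_\alpha^k \times (u_0-r, u_0)$ and $Q_\alpha^k \times (u_0, u_0+r)$, so \ref{en:tagli1} (at most $J$, since $J \geq 2$) and \ref{en:tagli2} are trivial, and \ref{en:tagli3} holds with constant $2 \leq C_2$ because each child has exactly half the measure of $R$. The only real content is \ref{en:tagli4}: we must check that $Q_\alpha^k \times (u_0-r/2, u_0)$, after recentering, is an admissible set with radius $r/2$ and center-height $u_1 := u_0 - r/2$, i.e.\ that the pair $(k, u_1, r/2)$ satisfies~\eqref{eq:adm}. One considers the subcases $r/2 \leq r_0 < r$, $r/2 > r_0$, and $r \leq r_0$ and checks the two-sided bound on $\rd^k$ in each; the lower bound on $\rd^k$ follows from the corresponding lower bound for $R$ together with $e^{u_1} \leq e^{u_0}$ and $r/2 < r$ (and, in the mixed subcase, the comparison of $e^{2M(r/2)}$ with $(r/2)e^{2M}$ via~\eqref{eq:Mr0_C}), while the upper bound on $\rd^k$ is where the failure of strong admissibility enters: not being strongly admissible means $\rd^{k-1}$ violates the relevant \emph{lower} bound, i.e.\ $\rd^{k-1} < r e^{2M} e^{u_0}$ (resp.\ $< e^{2Mr} e^{u_0}$), which gives $\rd^k < \rd \cdot r e^{2M} e^{u_0}$, and one uses~\eqref{eq:Mr0_F} (resp.\ \eqref{eq:Mr0_D}) together with $e^{u_1} = e^{u_0} e^{-r/2}$ to bound this by $4 (r/2) e^{8M} e^{u_1}$ (resp.\ $4 e^{8M(r/2)} e^{u_1}$). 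The same computation applies to the other child with $u_1 = u_0 + r/2 \geq u_0$, where now $e^{u_1} \geq e^{u_0}$ helps with the upper bound and one must instead use $e^{u_1} = e^{u_0} e^{r/2}$ carefully in the lower bound; here constraints~\eqref{eq:Mr0_D} and~\eqref{eq:Mr0_E} are designed to make things work.

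\emph{The main obstacle} is the bookkeeping in Case~2, property~\ref{en:tagli4}: one must verify the admissibility inequalities~\eqref{eq:adm} for the halved intervals across several subcases, and this is precisely the reason the constants $M$, $r_0$, $\rd$ were subjected to the list of constraints~\eqref{eq:Mr0_A}--\eqref{eq:Mr0_F}. The strategy is to treat each subcase ($r \leq r_0$; $r > r_0$; and the boundary case $r/2 \leq r_0 < r$ where the child changes from ``big'' to ``small'') separately, always splitting the claim into the lower and upper bound on $\rd^k$, and invoking the appropriate constraint from the list. Everything else is routine: \ref{en:tagli1} and \ref{en:tagli2} are direct consequences of Theorem~\ref{thm:dyadic}, and \ref{en:tagli3} reduces to the exact homogeneity of the volume in $N$.
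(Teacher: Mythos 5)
Your proposal is correct and follows essentially the same route as the paper: strong admissibility handles the dyadic-splitting case directly, and in the non-strongly-admissible case the failure of the lower bound at level $k-1$ supplies the needed upper bound on $\rd^k$, after which admissibility of the two halved children is checked in the three subcases $r\le r_0$, $r_0<r\le 2r_0$, $r>2r_0$ using the constraints \eqref{eq:Mr0_A}--\eqref{eq:Mr0_F}, exactly as in the paper. The only blemish is that your attributions of which constraint serves which subcase are partly scrambled (the paper uses \eqref{eq:Mr0_D} for the upper bound when $r\le r_0$, \eqref{eq:Mr0_E} for the upper bound when $r_0<r\le 2r_0$, and \eqref{eq:Mr0_F} when $r>2r_0$), but this does not affect the validity of the argument.
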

\begin{proof}
Let $R = Q_\alpha^k \times (u_0-r,u_0+r)$. Since $R$ is admissible, \eqref{eq:adm} holds. Suppose that $R$ is strongly admissible. Then the children of $R$, that is, the sets of the form
\[
Q_\beta^{k-1}\times (u_0-r,u_0+r),
\]
where $\beta \in I_{k-1}$ and $Q_\beta^{k-1} \subset Q_\alpha^k$, are admissible too. Moreover, from the properties of dyadic sets given by Theorem \ref{thm:dyadic} it is clear that properties \ref{en:tagli1},\ref{en:tagli2},\ref{en:tagli3} hold.

Suppose instead that $R$ is not strongly admissible. Then, when $r\leq r_0$, it must be
\begin{equation}\label{eq:noadmisrpic}
r \, e^{2M} e^{u_0} \leq \rd^k <\rd\,  r \, e^{2M} e^{u_0},
\end{equation}
while, when $r>r_0$,
\begin{equation}\label{eq:noadmisrgr}
e^{2Mr} e^{u_0} \leq \rd^k < \rd\, e^{2Mr} e^{u_0}.
\end{equation}
Moreover the children of $R$ are the sets
\[
R_1 = Q_{\alpha}^k\times (u_0-r,u_0) \qquad\text{and}\qquad R_2 = Q_{\alpha}^k\times (u_0,u_0+r),
\]
which are ``centered'' at $(n_{\alpha}^k,{u_0}-r/2)$ and $(n_{\alpha}^k,u_0+ r/2)$ respectively, and it is clear that properties \ref{en:tagli1},\ref{en:tagli2},\ref{en:tagli3} hold.
We shall prove that $R_1$ and $R_2$ are admissible: to do so, we distinguish three cases.

\emph{Case $r\leq r_0$.} In this case $R$ is a small admissible set and we must prove that $R_1,R_2$ are both small admissible sets, because $r/2\leq r_0$. Notice that
\[\begin{split}
e^{u_0-r/2}e^{2M}\frac {r}{2}&\leq e^{u_0+r/2}e^{2M}\frac{r}{2}\\
&\leq \frac12 \rd^k\,e^{r/2}\\
&\leq \frac12 \rd^k\,e^{\frac{r_0}{2}} \\
&\leq \rd^k,
\end{split}\]
since $r_0\leq 2\log 2$ by \eqref{eq:Mr0_A}. Moreover,
\[\begin{split}
\rd^k   &<\rd\,  e^{u_0}e^{2M}\,r\\
&<   \,4e^{8M}e^{u_0-r/2}\,\frac{r}{2}\\
&<4e^{8M}e^{u_0+r/2}\,\frac{r}{2},
\end{split}\]
since $\rd<2e^{6M}e^{-\frac{r_0}{2}}$ by condition \eqref{eq:Mr0_D}. This proves that $R_1$ and $R_2$ are admissible in this case.

\emph{Case $r_0<r\leq 2r_0$.} In this case $R$ is a big admissible set and we must prove that $R_1,R_2$ are both small admissible sets, because $r/2\leq r_0$. Notice that
\[\begin{split}
e^{u_0-r/2}e^{2M}\frac{r}{2}&\leq e^{u_0+r/2}e^{2M} \frac{r}{2}\\
&\leq  e^{u_0 }e^{2Mr}\\
&\leq \rd^k,
\end{split}\]
since $e^{r_0}e^{2M}r_0\leq e^{2Mr_0}$ by condition \eqref{eq:Mr0_C}. Moreover,
\[\begin{split}
\rd^k   &<\rd\,  e^{u_0}e^{2Mr}\\
&< 4e^{8M}e^{u_0-r/2}r/2\\
&<4e^{8M}e^{u_0+r/2}r/2\,,
\end{split}\]
since $\rd e^{4Mr_0}<2e^{8M} \inf_{r_0<r\leq 2r_0} re^{-r/2}$ by condition \eqref{eq:Mr0_E}. This proves that $R_1$ and $R_2$ are admissible in this case.

\emph{Case $r>2r_0$.} In this case $R$ is a big admissible set and we must prove that $R_1$, $R_2$ are both big admissible sets because $\frac{r}{2}> r_0$. Notice that
\[\begin{split}
e^{u_0-r/2}e^{2Mr/2}&\leq e^{u_0+r/2}e^{2Mr/2}\\
&\leq  e^{u_0 }e^{2Mr}\\
&\leq \rd^k,
\end{split}\]
since $M>\frac12$ by \eqref{eq:Mr0_B}. Moreover,
\[\begin{split}
\rd^k   &<\rd\,  e^{u_0}e^{2Mr}\\
&< 4e^{8Mr/2}e^{u_0-r/2}\\
&<4e^{8Mr/2}e^{u_0+r/2},
\end{split}\]
since $\rd<4e^{(4M-1)r_0}$ by condition \eqref{eq:Mr0_F}. This proves that $R_1$ and $R_2$ are admissible also in this case.
\end{proof}

By adapting the proof of \cite[Lemma 3.16]{V1}, we can construct a quasi-partition of $G$ in big admissible sets whose measure is as large as we want.

\begin{lem}\label{lem:partizionegrande}
For all $\sigma>0$ there exists a quasi-partition $\mathcal P$ of $G$ in big admissible sets whose measure is greater than $\sigma$.
\end{lem}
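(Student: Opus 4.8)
The plan is to construct, for a given $\sigma > 0$, a quasi-partition of $G$ by big admissible sets each of measure exceeding $\sigma$, by first fixing a single ``large'' scale in the $A$-direction and then tiling $G$ by products of dyadic sets in $N$ with translates of a fixed-length interval in $A$. Concretely, fix $r > r_0$ large enough (to be specified). Partition $A = \RR$ into the consecutive half-open intervals $A_\ell = (\ell \cdot 2r - r, \ell\cdot 2r + r]$, $\ell \in \ZZ$, so that each $A_\ell$ has the form $(u_\ell - r, u_\ell + r)$ with $u_\ell = 2r\ell$. For each $\ell$, the admissibility condition \eqref{eq:adm} in the big case requires a dyadic level $k = k(\ell)$ with $e^{2Mr} e^{u_\ell} \leq \rd^{k} < 4 e^{8Mr} e^{u_\ell}$; since $4 e^{8Mr} > \rd \cdot e^{2Mr}$ holds once $r$ is large (because $8Mr - 2Mr = 6Mr \to \infty$), the interval $[e^{2Mr+u_\ell}, 4 e^{8Mr+u_\ell})$ has length bigger than a factor $\rd$, hence contains a power $\rd^{k(\ell)}$, and we may pick such a $k(\ell)$.

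Then I would set $\mathcal{P} = \{ Q_\alpha^{k(\ell)} \times A_\ell : \ell \in \ZZ,\ \alpha \in I_{k(\ell)} \}$. By Definition \ref{dfn:admissibility} every element of $\mathcal{P}$ is a big admissible set (note $A_\ell$ is half-open rather than open, but it differs from the open interval $(u_\ell - r, u_\ell+r)$ by a null set, so this causes no measure-theoretic difficulty; alternatively one uses the open intervals and observes their union still has full measure). Pairwise disjointness follows from property \ref{en:dyadic3} of Theorem \ref{thm:dyadic} within a fixed level and from disjointness of the $A_\ell$ across levels. That the union has full measure follows from property \ref{en:dyadic1}: for each fixed $\ell$, $\bigcup_{\alpha \in I_{k(\ell)}} Q_\alpha^{k(\ell)}$ is co-null in $N$, so $\bigcup_\alpha Q_\alpha^{k(\ell)} \times A_\ell$ is co-null in $N \times A_\ell$, and summing over $\ell$ covers $G$ up to a null set. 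Thus $\mathcal{P}$ is a quasi-partition of $G$ by big admissible sets.

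Finally I would check the lower bound on the measure. By \ref{en:dyadic2}, $|Q_\alpha^{k(\ell)}| \geq |B_N(n_\alpha^{k(\ell)}, C_N^{-1}\rd^{k(\ell)})| = (C_N^{-1}\rd^{k(\ell)})^Q V_N \geq (C_N^{-1} e^{2Mr+u_\ell})^Q V_N$, using the lower admissibility bound $\rd^{k(\ell)} \geq e^{2Mr+u_\ell}$. Hence
\[
\mu\big(Q_\alpha^{k(\ell)} \times A_\ell\big) = |Q_\alpha^{k(\ell)}| \cdot 2r \geq 2r\, V_N\, C_N^{-Q} e^{Q(2Mr + u_\ell)}.
\]
Since $e^{Q u_\ell} \geq$ (bounded below over the relevant $\ell$ — in fact one only needs it bounded below, and for $\ell \geq 0$ it is $\geq 1$; for $\ell < 0$ one instead uses a coarser reasoning, or simply notes that the factor $e^{2QMr}$ already dominates). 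The cleanest route: the worst case is not uniform in $\ell$, so instead I will not insist on a single $r$ for all $\ell$, but rather allow $k(\ell)$ to depend on $\ell$ as above and observe $\mu(Q_\alpha^{k(\ell)} \times A_\ell) \geq 2r V_N C_N^{-Q} \rd^{Q k(\ell)} \cdot \rd^{-Q\cdot?}$... the honest statement is $|Q_\alpha^{k(\ell)}| \sim \rd^{Q k(\ell)}$ and $\rd^{k(\ell)} \asymp e^{2Mr + u_\ell}$, so the measure is $\gtrsim r e^{Q(2Mr+u_\ell)}$, which for $\ell \geq 0$ (hence $u_\ell \geq 0$) is $\geq r\, V_N\, C_N^{-Q}\, e^{2QMr} > \sigma$ once $r$ is large; for $\ell < 0$ one replaces the interval family by the single choice that keeps $u_\ell$ bounded below — i.e. one tiles only the half-line $\{u \geq -r\}$ by such sets and, for the complementary half-line, subdivides it instead into big admissible sets of the first type using large $r$ as well. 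I expect the main obstacle to be exactly this book-keeping: ensuring that every admissible set in the quasi-partition simultaneously (a) satisfies the big-admissibility inequalities \eqref{eq:adm} and (b) has measure $> \sigma$, uniformly over the unbounded range of $A$; the resolution is that enlarging $r$ makes the factor $e^{2QMr}$ in the measure as large as desired while only strengthening the admissibility constraints in a way that remains satisfiable (the gap $6Mr$ between the two sides of \eqref{eq:adm} grows), so a suitable choice of $r = r(\sigma)$, together with following the pattern of \cite[Lemma 3.16]{V1}, closes the argument.
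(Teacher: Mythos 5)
Your construction works, and essentially coincides with the paper's, on the part of $G$ where $u$ is bounded below: there a single large $r=r(\sigma)$ suffices, since the lower bound in \eqref{eq:adm} together with Theorem \ref{thm:dyadic}\ref{en:dyadic2} gives $\mu\bigl(Q_\alpha^{k(\ell)}\times A_\ell\bigr)\geq 2r\,V_N C_N^{-Q} e^{Q(2Mr+u_\ell)}$, which exceeds $\sigma$ for large $r$ when $u_\ell\geq -r$ (as $M>1$). The genuine gap is the half-line $u\to-\infty$, and your proposed resolution --- ``a suitable choice of $r=r(\sigma)$'' --- cannot work there. Indeed, for a \emph{fixed} $r$ the admissibility condition also caps the dyadic scale from above, $\rd^{k(\ell)}<4e^{8Mr}e^{u_\ell}$, so $\mu\bigl(Q_\alpha^{k(\ell)}\times A_\ell\bigr)\leq 2r\,V_N (4C_N)^{Q} e^{Q(8Mr+u_\ell)}\to 0$ as $u_\ell\to-\infty$, whatever $k(\ell)$ you pick. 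Enlarging $r$ uniformly only postpones the failure; your text acknowledges the difficulty and then defers to ``following the pattern of \cite[Lemma 3.16]{V1}'', which is precisely the step that has to be supplied.

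The missing idea --- and what the paper's inductive construction actually does --- is to let the interval half-lengths grow as one descends. Tile $(-\infty,-r_1)$ by consecutive intervals $(u_{n+1}-r_{n+1},u_{n+1}+r_{n+1})$ with $u_{n+1}=u_n-r_n-r_{n+1}$, choosing $r_{n+1}>r_0$ so large that $(2M-1)r_{n+1}\geq |u_n-r_n|+c$ for a constant $c=c(\sigma)$; then $2Mr_{n+1}+u_{n+1}\geq c$, so every tile at that level has measure at least $2r_{n+1}V_N C_N^{-Q}e^{Qc}>\sigma$, while a suitable $k_{n+1}$ exists because the ratio of the two sides of \eqref{eq:adm} is $4e^{6Mr_{n+1}}>\rd$. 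Since $\sum_n r_{n+1}=\infty$, these intervals exhaust the half-line, and the rest of your argument (disjointness from Theorem \ref{thm:dyadic}\ref{en:dyadic3}, full measure from \ref{en:dyadic1}, the harmless open/half-open distinction) goes through unchanged.
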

\begin{proof}
Choose $r_1>r_0$ and $k_1\in \mathbb Z$ such that $e^{2Mr_1}\leq \rd^{k_1} <4e^{8Mr_1}$. Then the sets $R^1_{\alpha}=Q^{k_1}_{\alpha}\times  (-r_1,r_1)$, $\alpha\in I_{k_1}$,  are a quasi-partition of $N\times (-r_1,r_1)$ made of big admissible sets. It is possible to choose $k_1$ and $r_1$ in such a way that $|B_N(0_N,C_N^{-1} \rd^{k_1})| 2r_1 >\sigma$, so that $\mu(R^1_{\alpha})>\sigma$ for all $\alpha\in I_{k_1}$.

Suppose that a quasi-partition of $N\times (r_1+\dots +2r_{n-1},r_1+\dots+ 2r_{n-1}+2r_n)$ made of big admissible sets of measure greater than $\sigma$ has been constructed. Choose $r_{n+1}>r_0$ and $k_{n+1}\in \mathbb Z$ such that $e^{2Mr_{n+1}} e^{u_{n+1}}  \leq \rd^{k_{n+1}} <4e^{8Mr_{n+1}}e^{u_{n+1}}$, where $u_{n+1}=r_1+\dots+ 2r_{n}+r_{n+1}$. Then the sets $R^{n+1}_{\alpha}=Q^{k_{n+1}}_{\alpha}\times  ( u_{n+1}-r_{n+1}, u_{n+1}+r_{n+1} )$, $\alpha\in I_{k_{n+1}}$, are a quasi-partition of $N\times (r_1+\dots +2r_{n}, r_1+\dots +2r_{n}+2r_{n+1})$  made of big admissible sets. It is possible to choose $k_{n+1}$ and $r_{n+1}$ in such a way that $|B_N(0_N,C_N^{-1} \rd^{k_{n+1}})| 2r_{n+1} >\sigma$, so that $\mu(R^{n+1}_{\alpha})>\sigma$ for all $\alpha\in I_{k_{n+1}}$.

By iterating this process we get a quasi-partition of $N\times (-r_1,\infty)$ made of big admissible sets with measure greater than $\sigma$. By a similar procedure we get a quasi-partition of $N\times (-\infty,-r_1)$ made of big admissible sets with measure greater than $\sigma$, as required.
\end{proof}

Lemma \ref{lem:tagli} shows that we can iteratively consider children, children of children, children of children of children, ..., that is, \emph{descendants} of an admissible set and all these sets are admissible. In this way we can also define subsequent refinements of a quasi-partition of $G$ in admissible sets.  Namely, let $\mathcal{P}$ be a quasi-partition of $G$ in admissible sets and define $\desc^n(\mathcal{P})$ iteratively for all $n \in \NN$ as follows:
\[
\desc^0(\mathcal{P}) = \mathcal{P}, \qquad \desc^{n+1}(\mathcal{P}) = \bigcup_{R \in \desc^n(\mathcal{P})} \chil(R).
\]
Finally define $G_{\mathcal{P}} = \bigcap_{n \in \NN} \bigcup \desc^n(\mathcal{P})$ and $\desc(\mathcal{P}) = \bigcup_{n \in \NN} \desc^n(\mathcal{P})$. $\desc(\mathcal{P})$ is the set of descendants of elements of $\mathcal{P}$.

\begin{lem}\label{lem:partitionsplitting}
Let $\mathcal{P}$ be a quasi-partition of $G$ in admissible sets. Then the following hold:
\begin{enumerate}[label=(\roman*)]
\item\label{en:partitionsplitting1} For all $n \in \NN$, $\desc^n(\mathcal{P})$ is a quasi-partition of $G$ in admissible sets.
\item\label{en:partitionsplitting0} For all $R,R' \in \desc(\mathcal{P})$, either $R \cap R' = \emptyset$ or $R \subset R'$ or $R' \subset R$.
\item\label{en:partitionsplitting2} $G_{\mathcal{P}}$ has full measure in $G$.
\item\label{en:partitionsplitting3} For all $x \in G_{\mathcal{P}}$ and $n \in \NN$, there is a unique $R_x^n \in \desc^n(\mathcal{P})$ such that $x \in R_x^n$.
\item\label{en:partitionsplitting4} For all $x \in G_{\mathcal{P}}$ and all neighborhoods $U$ of $x$, there exists $n \in \NN$ such that $R_x^n \subset U$.
\end{enumerate}
\end{lem}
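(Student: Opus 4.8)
I would prove the five assertions roughly in the order listed, since each one builds on the previous ones; the proof is essentially a careful bookkeeping exercise given Lemma \ref{lem:tagli}. The only genuinely non-obvious point is \ref{en:partitionsplitting4}, for which I need a quantitative statement that the admissible sets in $\desc^n(\mathcal{P})$ shrink as $n \to \infty$; I will address this by tracking both the $N$-diameter and the $A$-width of a descendant.

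\emph{Proof of \ref{en:partitionsplitting1}.} Argue by induction on $n$, the case $n=0$ being the hypothesis. If $\desc^n(\mathcal{P})$ is a quasi-partition of $G$ in admissible sets, then $\desc^{n+1}(\mathcal{P}) = \bigcup_{R \in \desc^n(\mathcal{P})} \chil(R)$: by Lemma \ref{lem:tagli}\ref{en:tagli4} every set in it is admissible, and by Lemma \ref{lem:tagli}\ref{en:tagli2} the children of each $R \in \desc^n(\mathcal{P})$ form a quasi-partition of $R$, so (since the $R$ themselves are pairwise disjoint with union of full measure in $G$) the union of all these children is again a quasi-partition of $G$; countability is preserved because $\chil(R)$ is finite by Lemma \ref{lem:tagli}\ref{en:tagli1}.

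\emph{Proof of \ref{en:partitionsplitting0}.} Suppose $R \in \desc^m(\mathcal{P})$ and $R' \in \desc^n(\mathcal{P})$ with, say, $m \geq n$. By \ref{en:partitionsplitting1}, $R'$ lies in a quasi-partition; moreover, unravelling the definition of $\desc$, the set $R$ is obtained from some $R_0 \in \desc^n(\mathcal{P})$ by $m-n$ successive passages to children, and at each such passage a child is contained in its parent (immediate from Definition \ref{dfn:children}). Hence $R \subset R_0$. Now $R_0$ and $R'$ both belong to the quasi-partition $\desc^n(\mathcal{P})$, so either $R_0 = R'$ (whence $R \subset R'$) or $R_0 \cap R' = \emptyset$ up to a null set. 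In the latter case I need genuine disjointness, not just up to measure zero; this follows because the sets $R_0, R'$ are products of a Christ dyadic set and an interval, and the Christ dyadic sets $Q^k_\alpha$ with fixed $k$ are pairwise disjoint by Theorem \ref{thm:dyadic}\ref{en:dyadic3} while at different scales property \ref{en:dyadic6} gives either containment or disjointness, so in fact $\desc^n(\mathcal{P})$ is a genuine partition up to a fixed null set $Z_n$; taking $Z = \bigcup_n Z_n$ and restricting attention to $G \setminus Z$ handles this. (Alternatively one simply notes that two admissible sets are either disjoint or one contains the other, since the intervals in $A$ that arise as second factors of descendants of a fixed $R \in \mathcal P$ are nested or disjoint by the bisection rule, and similarly for the $N$-factors.)

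\emph{Proof of \ref{en:partitionsplitting2}, \ref{en:partitionsplitting3}.} For each $n$, $\bigcup \desc^n(\mathcal{P})$ has full measure by \ref{en:partitionsplitting1}, hence so does the countable intersection $G_{\mathcal{P}}$, giving \ref{en:partitionsplitting2}. For \ref{en:partitionsplitting3}, fix $x \in G_{\mathcal{P}}$ and $n \in \NN$; since $\desc^n(\mathcal{P})$ is a quasi-partition whose union contains $x$, there is some $R \in \desc^n(\mathcal{P})$ with $x \in R$, and it is unique because by \ref{en:partitionsplitting0} two such sets would have to be nested, contradicting that both lie in the pairwise disjoint family $\desc^n(\mathcal{P})$ unless they coincide. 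Moreover the $R_x^n$ are nested: $R_x^{n+1} \subset R_x^n$, because $R_x^{n+1} \in \chil(R')$ for some $R' \in \desc^n(\mathcal{P})$, and $R_x^{n+1} \subset R' $ forces $R' = R_x^n$ by uniqueness.

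\emph{Proof of \ref{en:partitionsplitting4}.} This is the main point. Write $R_x^n = Q^{k_n}_{\alpha_n} \times (u_n - r_n, u_n + r_n)$. By Definition \ref{dfn:children}, passing from $R_x^n$ to $R_x^{n+1}$ either halves the $A$-width ($r_{n+1} = r_n/2$, same $k_n$) or keeps $r$ fixed and decreases $k$ by one (when $R_x^n$ is strongly admissible). If the second alternative occurs only finitely often, then from some point on $r_n \to 0$, and since $R_x^n \subset B_N(n^{k_n}_{\alpha_n}, C_N \eta^{k_n}) \times (u_n - r_n, u_n + r_n)$ with $\eta^{k_n} \sim r_n e^{2M} e^{u_n}$ (admissibility, small case) and $u_n \to u_\infty := \dist(x,0_G)$-component staying bounded, the $N$-diameter of $R_x^n$ is $\lesssim \eta^{k_n} \to 0$ too, so $\diam_\dist R_x^n \to 0$ by Proposition \ref{prp:distance} and eventually $R_x^n \subset U$. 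If instead the second alternative (decrease of $k$) occurs infinitely often, then $k_n \to -\infty$; but $r_n$ is non-increasing and bounded below, so $r_n \to r_\infty > 0$, and then the admissibility bound $\eta^{k_n} \geq r_\infty e^{2M} e^{u_n}$ (or $\geq e^{2Mr_\infty}e^{u_n}$ in the big case) forces $e^{u_n} \to 0$, i.e. $u_n \to -\infty$; but the intervals $(u_n - r_n, u_n + r_n)$ are nested and all contain the fixed $A$-coordinate of $x$, a contradiction. Hence the first alternative must dominate and we are done. I expect the bookkeeping distinguishing ``small'' and ``big'' admissible sets along the chain $(R_x^n)$ — and checking that $k_n \to -\infty$ is impossible — to be the only delicate part; everything else is a direct consequence of Lemma \ref{lem:tagli} and the nesting property \ref{en:partitionsplitting0}.
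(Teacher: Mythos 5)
Parts \ref{en:partitionsplitting1}--\ref{en:partitionsplitting3} of your argument are correct and essentially the same as the paper's; your detour in \ref{en:partitionsplitting0} about disjointness ``up to a null set'' is unnecessary, since a quasi-partition is by definition genuinely pairwise disjoint and the children produced by Definition \ref{dfn:children} are genuinely pairwise disjoint (Theorem \ref{thm:dyadic}\ref{en:dyadic3} for the dyadic factors, trivially for the halved intervals).

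Your proof of \ref{en:partitionsplitting4}, however, has a genuine gap: the dichotomy is set up incorrectly and both branches contain errors. In your first branch ($k$ decreases only finitely often) the index $k_n$ is eventually \emph{constant}, so $\eta^{k_n}$ is eventually a fixed positive number and the $N$-diameter of $R_x^n$ does \emph{not} tend to $0$; your deduction ``$\eta^{k_n}\sim r_n e^{2M}e^{u_n}\to 0$'' contradicts that constancy. What this inconsistency actually shows is that the branch is impossible (the upper bound $\eta^{k_n}<4r_ne^{8M}e^{u_n}$ of \eqref{eq:adm}, with $r_n\to 0$ and $u_n$ bounded, is incompatible with $\eta^{k_n}$ constant), not that $R_x^n\subset U$. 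In your second branch you assert that $r_n\to r_\infty>0$, but a non-increasing positive sequence need only satisfy $r_\infty\geq 0$: if halvings also occur infinitely often then $r_\infty=0$, the lower bound $r_ne^{2M}e^{u_n}\leq\eta^{k_n}$ degenerates (both sides tend to $0$), and no contradiction arises. That sub-case --- both kinds of steps occurring infinitely often --- is precisely the one that actually occurs and the one that yields the conclusion; your argument instead ``refutes'' it and falls back on the impossible first branch. The correct structure, as in the paper, is to show by contradiction that $R_x^n$ is strongly admissible for infinitely many $n$ (otherwise the upper bound of \eqref{eq:adm} fails after repeated halvings of $r$ with $k$ fixed) \emph{and} not strongly admissible for infinitely many $n$ (otherwise the lower bound of \eqref{eq:adm} fails as $\eta^\ell\to 0$ when $\ell\to-\infty$ with $r,u_0$ fixed); then $r_n\to 0$ and $k_n\to-\infty$ simultaneously, so both projections of $R_x^n$ shrink to the corresponding coordinates of $x$ and $R_x^n\subset U$ for $n$ large.
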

\begin{proof}
\mbox{\ref{en:partitionsplitting1}} is an immediate consequence of Lemma \ref{lem:tagli} and \ref{en:partitionsplitting2} follows because $G_\mathcal{P}$ is a countable intersection of sets of full measure in $G$.

About \ref{en:partitionsplitting0}, take $R \in \desc^n(\mathcal{P})$ and $R' \in \desc^{n'}(\mathcal{P})$ for some $n,n' \in \NN$. If $R \cap R' \neq \emptyset$, then necessarily $n \neq n'$. Suppose that $n < n'$. Then by construction $R'$ is descendant of exactly one element $R'' \in \desc^n(\mathcal{P})$. Consequently: either $R'' = R$ and therefore $R' \subset R$, or $R'' \cap R = \emptyset$ and then also $R' \cap R = \emptyset$.

As for \ref{en:partitionsplitting3}, since $x$ belongs to the union of $\desc^n(\mathcal{P})$ and $\desc^n(\mathcal{P})$ is a quasi-partition of $G$, clearly a set $R_x^n \in \desc^n(\mathcal{P})$ such that $x \in R_x^n$ exists and is unique. In fact from the construction it is clear that $R_x^{n+1}$ is a child of $R_x^n$ for all $n \in \NN$. In particular the sets $R_x^n$ for fixed $x$ form a decreasing sequence as $n$ grows in $\NN$ and, at each step, in the passage from $R_x^n = Q^k_\alpha \times (u_0-r,u_0+r)$ to its child $R_x^{n+1}$, either the first factor $Q^k_\alpha$ is replaced by one of its children $Q^{k-1}_\beta$, or the second factor $(u_0-r,u_0+r)$ is halved. In order to prove \ref{en:partitionsplitting4}, it will be then sufficient to show that each of these two alternatives does happen infinitely many times, i.e., that $R_x^n$ is strongly admissible for infinitely many $n$ and also that $R_x^n$ is not strongly admissible for infinitely many $n$: in fact, in this case, the diameter of both projections of $R_x^n$ onto the two factors $N$ and $A$ of $G$ tends to $0$ as $n \to \infty$.

By contradiction, suppose that, for all $n$ greater than some $n_0$, $R_x^n$ is strongly admissible. This means that, if $R_x^{n_0} = Q^k_\alpha \times (u_0-r,u_0+r)$, then $R_x^n$ has the form $Q^{k+n_0-n}_{\alpha_n} \times (u_0-r,u_0+r)$ for all $n \geq n_0$, where $\alpha_n \in I_{k+n_0-n}$. Since the $R_x^n$ are all admissible, \eqref{eq:adm} must hold when $k$ is replaced by $\ell$ for all integers $\ell \leq k$, while $u_0$ and $r$ remain the same, and when $\ell$ tends to $-\infty$ one obtains a contradiction. Similarly one obtains a contradiction by assuming that, for all $n \geq n_0$, $R_x^n$ is not strongly admissible: in this case one would have $\eta^k < 4(2^{-\ell} r) e^{8M} e^{u_0+r}$ for fixed $k$,$u_0$,$r$ and for all sufficiently large $\ell$, which is clearly impossible.
\end{proof}

For all quasi-partitions  $\mathcal{P}$  of $G$ in admissible sets, we define the maximal operator $M^{\mathcal{P}}$ as follows:
for all functions $f$ in $L_{\loc}^1(G)$ and $x \in G$,
\[
M^{\mathcal{P} }f(x)= \begin{dcases}
\sup_{\substack{R\in \desc(\mathcal{P})\\R \ni x}}\frac{1}{\mu({R})}\int_R|f|\di\mu & \text{if $x \in \bigcup \desc(\mathcal{P})$,}\\
0 & \text{otherwise}.
\end{dcases}
\]

\begin{prop}\label{prp:maximal}
Let $\mathcal{P}$ be a quasi-partition of $G$ in admissible sets.
\begin{enumerate}[label=(\roman*)]
\item\label{en:maximal1} $M^\mathcal{P} f$ is measurable for all $f \in L^1_\loc(G)$ and
\begin{equation}\label{eq:sublinear}
M^\mathcal{P} (\lambda f + \lambda' f') \leq |\lambda| M^\mathcal{P} f + |\lambda'| M^\mathcal{P} f'
\end{equation}
for all $\lambda,\lambda' \in \CC$ and $f,f' \in L^1_\loc(G)$.
\item\label{en:maximal2} The maximal operator $M^{\mathcal{P}}$ is of weak type $(1,1)$.
\item\label{en:maximal3} For all $f \in L^1_\loc(G)$, $|f| \leq M^{\mathcal{P}} f$ almost everywhere.
\end{enumerate}
\end{prop}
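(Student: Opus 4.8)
The plan is to mimic the classical proof that the dyadic maximal function is of weak type $(1,1)$, using the tree structure of $\desc(\mathcal P)$ provided by Lemma 4.22.

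First I would address \ref{en:maximal1}. Sublinearity \eqref{eq:sublinear} is immediate from the pointwise inequality $\int_R |\lambda f+\lambda' f'| \le |\lambda|\int_R|f| + |\lambda'|\int_R|f'|$ inside the supremum defining $M^{\mathcal P}$. For measurability, I would note that $\desc(\mathcal P)$ is a countable family (each $\desc^n(\mathcal P)$ is countable by Lemma \ref{lem:tagli}\ref{en:tagli1} and Theorem \ref{thm:dyadic}), so $M^{\mathcal P}f$ is a countable supremum of functions each of which is constant on an admissible (hence measurable) set and zero elsewhere; thus $M^{\mathcal P}f$ is measurable. Item \ref{en:maximal3} is the Lebesgue differentiation statement relative to the partition: by Lemma \ref{lem:partitionsplitting}\ref{en:partitionsplitting2}--\ref{en:partitionsplitting4}, for a.e.\ $x$ there is a decreasing sequence $R_x^n \in \desc^n(\mathcal P)$ with $x \in R_x^n$ and $\diam R_x^n \to 0$; since $R_x^n \subset B_\varrho((n_\alpha^k,u_0),C_1 r)$ has comparable measure to a ball (Proposition \ref{prp:proprieta}), the standard argument (approximating $f$ in $L^1_\loc$ by continuous functions, for which the averages over $R_x^n$ converge to the value at $x$, and controlling the error term by the weak type bound for the maximal function restricted to a fixed big admissible set from $\mathcal P$) yields $f(x) = \lim_n \frac{1}{\mu(R_x^n)}\int_{R_x^n} f \di\mu$ a.e., hence $|f(x)| \le M^{\mathcal P}f(x)$. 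Alternatively, and more cleanly, I would prove \ref{en:maximal3} directly from \ref{en:maximal4} in the statement (the density of points covered) together with the martingale-type observation that the averages $\frac{1}{\mu(R_x^n)}\int_{R_x^n}f$ form a martingale along the tree and converge a.e.\ to $f$ by the martingale convergence theorem.

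The heart is \ref{en:maximal2}. Fix $\alpha > 0$ and $f \in L^1(G)$; let $E_\alpha = \{x : M^{\mathcal P}f(x) > \alpha\}$. By definition, for each $x \in E_\alpha$ there is some $R \in \desc(\mathcal P)$ with $x \in R$ and $\frac{1}{\mu(R)}\int_R|f|\di\mu > \alpha$. Call such an $R$ \emph{$\alpha$-heavy}. Then $E_\alpha \subset \bigcup\{R : R \text{ is $\alpha$-heavy}\}$. By Lemma \ref{lem:partitionsplitting}\ref{en:partitionsplitting0}, any two distinct descendants are either disjoint or nested, so the $\alpha$-heavy sets that are maximal with respect to inclusion among $\alpha$-heavy sets form a pairwise disjoint subfamily $\{R_i\}_i$ whose union contains $E_\alpha$. (Maximal elements exist: along any increasing chain $R \subset R' \subset \cdots$ of $\alpha$-heavy descendants, the "generation index" $n$ with $R \in \desc^n(\mathcal P)$ strictly decreases, but also each $R \subset B_\varrho(x_R, C_1 r_R)$ with $\mu(R) \ge \alpha^{-1}\int_R|f| $ bounded below away from... ) — here I need to be a little careful, so the cleanest route is: the union of all $\alpha$-heavy descendants equals the union of those that are maximal, provided every $\alpha$-heavy descendant is contained in a maximal one; this holds because an $\alpha$-heavy descendant $R$ is contained in at most finitely many descendants (the ancestors $R_x^0 \supset R_x^1 \supset \cdots$ of a point $x \in R$ that contain $R$ correspond to generations $0,1,\dots,n$ where $R \in \desc^n(\mathcal P)$, a finite set), so a maximal $\alpha$-heavy ancestor exists. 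Then
\[
\mu(E_\alpha) \le \sum_i \mu(R_i) \le \sum_i \frac{1}{\alpha}\int_{R_i}|f|\di\mu = \frac{1}{\alpha}\int_{\bigcup_i R_i}|f|\di\mu \le \frac{1}{\alpha}\|f\|_1,
\]
where the disjointness of the $R_i$ was used in the middle equality. This gives the weak type $(1,1)$ bound with constant $1$.

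The main obstacle I anticipate is the bookkeeping in \ref{en:maximal2} around the \emph{existence of maximal $\alpha$-heavy descendants}: unlike the classical dyadic case, $\desc(\mathcal P)$ is not indexed so that "larger set $\Rightarrow$ smaller generation" in an absolute sense across different branches, but Lemma \ref{lem:partitionsplitting}\ref{en:partitionsplitting3} tells us that the ancestors of a fixed $\alpha$-heavy set $R$ within $\desc(\mathcal P)$ form a finite chain (those $R_x^m$, $0 \le m \le n$, for any $x \in R$), so a topmost $\alpha$-heavy ancestor always exists, and two such topmost sets for different $R$'s are either equal or disjoint. Once this is pinned down, the argument is the standard stopping-time / Vitali-free covering argument and the rest is routine. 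For \ref{en:maximal3}, the martingale convergence approach sidesteps any need for the doubling property (which fails on $G$), which is why I would prefer it over a Lebesgue-differentiation-via-continuous-functions argument; the only input is that $G_{\mathcal P}$ has full measure and that $\{R_x^n\}_n$ is a decreasing sequence generating (in the measure-theoretic sense) the point $x$, both from Lemma \ref{lem:partitionsplitting}.
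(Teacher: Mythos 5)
Your proposal is correct and follows essentially the same route as the paper: measurability via a countable supremum of measurable functions, the stopping-time selection of maximal heavy sets for the weak type $(1,1)$ bound with constant $1$ (their existence via finite ancestor chains is exactly the paper's choice of the ``largest'' such set), and the standard weak-type-plus-dense-subclass argument for the a.e.\ lower bound. The only (inessential) slip is the parenthetical claim that $R_x^n$ has measure comparable to that of the containing ball --- false in general because of exponential volume growth --- but it is not needed, since the differentiation step only uses that the sets $R_x^n$ eventually enter every neighborhood of $x$, which is Lemma \ref{lem:partitionsplitting}\ref{en:partitionsplitting4}.
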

\begin{proof}
\mbox{\ref{en:maximal1}}. $M^\mathcal{P} f = \sup_{n \in \NN} M^\mathcal{P}_n f$, where
\[
M^\mathcal{P}_n f(x) = \begin{dcases}
\frac{1}{\mu({R_x^n})}\int_{R_x^n} |f|\di\mu & \text{if $x \in \bigcup \desc^n(\mathcal{P})$,}\\
0 & \text{otherwise},
\end{dcases}
\]
and the sets $R_x^n$ are defined as in Lemma \ref{lem:partitionsplitting}. Clearly the $M^\mathcal{P}_n f$ are measurable and consequently $M^\mathcal{P} f$ is measurable too. The inequality \eqref{eq:sublinear} is clear by the definition.

\ref{en:maximal2}. Let $f$ be in $L^1(G)$ and $\alpha>0$. Consider the set $\Omega_{\alpha}=\{M^{\mathcal{P}}f>\alpha \}$. For each point $x\in \Omega_{\alpha}$ let $R_x$ be the largest set (in the sense of inclusion) in $\desc(\mathcal{P})$ that contains $x$ such that the average of $|f|$ on $R_x$ is greater than $\alpha$. If $\mathcal{S} = \{R_x \tc x \in \Omega_\alpha \}$, then $\mathcal{S}$ is a partition of $\Omega_\alpha$ made of elements of $\desc(\mathcal{P})$. Thus,
\[
\mu\big(\Omega_{\alpha}\big)=\sum_{R \in \mathcal{S}} \mu(R) \leq \frac{1}{\alpha} \sum_{R \in \mathcal{S}} \int_{R}|f| \di\mu \leq \frac{\|f\|_{1}}{\alpha}.
\]

\ref{en:maximal3}. By \ref{en:maximal2} and standard arguments (cf.\ \cite[Theorem II.3.12]{SW} or \cite[Theorems 2.2 and 2.10]{duoandikoetxea_fourier_2001}) it is sufficient to consider the case where $f$ is continuous. In this case
\[
M^{\mathcal{P}} f(x) \geq \lim_{n \to \infty} \frac{1}{\mu(R_x^n)} \int_{R_x^n} |f| \di\mu = |f(x)|
\]
for all $x \in G_\mathcal{P}$, by Lemma \ref{lem:partitionsplitting}\ref{en:partitionsplitting4}, and $G_\mathcal{P}$ has full measure by Lemma \ref{lem:partitionsplitting}\ref{en:partitionsplitting2}. 
\end{proof}

Now we are able to construct the \CZ\ decomposition of an integrable function on $G$.

\begin{teo}\label{thm:CZ}
The space $(G,\varrho,\mu)$ with the family $\mathcal{R}$ of admissible sets is a CZ-space which satisfies condition \condC.
\end{teo}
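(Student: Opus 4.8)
The admissibility properties \ref{en:CZadm1}--\ref{en:CZadm2} of Definition~\ref{dfn:CZdef} have already been established in Proposition~\ref{prp:proprieta}: every admissible set $R=Q_\alpha^k\times(u_0-r,u_0+r)$ is contained in $B_\varrho\big((n_\alpha^k,u_0),C_1 r\big)$ and satisfies $\mu(R^*)\le C^*\mu(R)$, so one takes $(n_\alpha^k,u_0)$ as centre and $r$ as radius of $R$, and fixes the CZ-constant $\kappa_0$ to be a sufficiently large quantity depending on $C_1$, $C^*$ and the constant $C_2$ of Lemma~\ref{lem:tagli}. Since $\mu(G)=\infty$, the two remaining tasks are: (i) to produce, for each $f\in L^1(G)$ and each $\alpha>0$, a \CZ\ decomposition of $f$ at height $\alpha$; and (ii) to exhibit a subfamily $\mathcal{R}'$ of $\mathcal{R}$ witnessing condition~\condC.

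For step (i) the plan is a stopping-time argument in the spirit of \cite{HS,V1}. First, using Lemma~\ref{lem:partizionegrande}, fix a quasi-partition $\mathcal{P}$ of $G$ in big admissible sets with $\mu(R)>\|f\|_1/\alpha$ for every $R\in\mathcal{P}$; then the average of $|f|$ over each $R\in\mathcal{P}$ is strictly smaller than $\alpha$, so along each decreasing chain $R_x^0\supset R_x^1\supset\cdots$ as in Lemma~\ref{lem:partitionsplitting} the average of $|f|$ can first exceed $\alpha$ only at some $R_x^n$ with $n\ge1$, whence every $R\in\desc(\mathcal{P})$ on which that average exceeds $\alpha$ is contained in a maximal such set. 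Let $\mathcal{S}=\{R_i\}_i$ be the (countable) family of these maximal sets; by Lemma~\ref{lem:partitionsplitting}\ref{en:partitionsplitting0} the $R_i$ are pairwise disjoint and, up to a null set, $\bigcup_i R_i=\{M^{\mathcal{P}}f>\alpha\}$. I would then set
\[
b_i=\Bigl(f-\frac{1}{\mu(R_i)}\int_{R_i}f\di\mu\Bigr)\chi_{R_i},\qquad g=f-\sum_i b_i .
\]
Property~\ref{en:CZdec2} is immediate, \ref{en:CZdec3} follows from $\mu(R_i)\le\alpha^{-1}\int_{R_i}|f|\di\mu$ and disjointness, and \ref{en:CZdec4} from $\|b_i\|_1\le2\int_{R_i}|f|\di\mu$. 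For \ref{en:CZdec1}: outside $\bigcup_i R_i$ one has $g=f$ and $|f|\le M^{\mathcal{P}}f\le\alpha$ almost everywhere by Proposition~\ref{prp:maximal}\ref{en:maximal3}; on each $R_i$, $g$ equals the average of $f$ over $R_i$, which by maximality of $R_i$ is dominated by the average of $|f|$ over the parent $\widehat{R_i}\in\desc(\mathcal{P})$ of $R_i$, hence by $\big(\mu(\widehat{R_i})/\mu(R_i)\big)\alpha\le C_2\alpha$ thanks to Lemma~\ref{lem:tagli}\ref{en:tagli3}. This proves that $(G,\varrho,\mu)$ is a CZ-space.

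For step (ii) the plan is to align the $A$-intervals of admissible sets to a fixed dyadic grid on $\RR$, following the scheme of \cite{V1}: keeping the dyadic system $\{Q_\alpha^k\}$ of Theorem~\ref{thm:dyadic} on $N$ and fixing a small $\delta>0$, let $\mathcal{R}'$ consist of those admissible sets of the form $Q_\alpha^k\times\big(m2^{j}\delta,(m+1)2^{j}\delta\big)$ with $j,m\in\ZZ$. Property~(ii) of condition~\condC\ is the easy one: a given admissible set has finite radius, so upon enlarging both the dyadic level $k$ and the grid scale $j$ enough — while staying within the bounds \eqref{eq:adm}, which the constraints \eqref{eq:Mr0_A}--\eqref{eq:Mr0_F} on $M$, $r_0$, $\rd$ permit — it is contained in a single element of $\mathcal{R}'$. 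The \emph{main obstacle} is property~(i): if two sets of $\mathcal{R}'$ meet, then their $N$-factors are nested by Theorem~\ref{thm:dyadic}\ref{en:dyadic6} and their $A$-factors are nested because they are dyadic intervals, but a priori the two nestings could point in opposite directions. One must check that the admissibility condition \eqref{eq:adm}, whose right-hand side carries the factor $e^{u_0}$ reflecting the exponential volume growth of $G$, couples the dyadic level $k$ so tightly to the width and the position $u_0$ of the $A$-interval that the larger $N$-factor is necessarily paired with the larger $A$-interval, so that one of the two sets contains the other. Making this coupling quantitative is precisely where the specific numerical choices in \eqref{eq:Mr0_A}--\eqref{eq:Mr0_F} enter, and once property~(i) is secured, $(G,\varrho,\mu)$ satisfies condition~\condC.
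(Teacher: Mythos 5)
Your step (i) is exactly the paper's argument: the same quasi-partition of $G$ into big admissible sets of measure greater than $\|f\|_1/\alpha$ from Lemma~\ref{lem:partizionegrande}, the same stopping-time selection of maximal sets on which the average of $|f|$ exceeds $\alpha$, and the same verification of \ref{en:CZdec1}--\ref{en:CZdec4} via Proposition~\ref{prp:maximal}\ref{en:maximal3} and Lemma~\ref{lem:tagli}\ref{en:tagli3}. That part is complete.

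Step (ii) has a genuine gap, and it sits precisely at the point you flag as the ``main obstacle'' but do not resolve. The coupling between the dyadic level $k$ and the pair $(u_0,r)$ imposed by \eqref{eq:adm} is \emph{not} tight enough to force the nesting you need: for a fixed interval $(u_0-r,u_0+r)$ the admissible levels $k$ range over all integers with $\rd^k\in[re^{2M}e^{u_0},\,4re^{8M}e^{u_0})$ (small case), an interval of length $\log_{\rd}(4e^{6M})>1$ in $k$. So if $R_1=Q^{k_1}_{\alpha_1}\times I_1$ and $R_2=Q^{k_2}_{\alpha_2}\times I_2$ are two sets of your family with $I_1\subsetneq I_2$ dyadic and overlapping $N$-factors, the admissibility constraints only give $\rd^{k_1}<4e^{6M}e^{r_2}\rd^{k_2}$, i.e., they bound $k_1-k_2$ above by a constant but do not force $k_1\le k_2$. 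One can therefore have $k_1>k_2$, hence $Q^{k_2}_{\alpha_2}\subsetneq Q^{k_1}_{\alpha_1}$ while $I_1\subsetneq I_2$: the two sets intersect but neither contains the other, and property (i) of condition \condC\ fails for your $\mathcal{R}'$. The numerical conditions \eqref{eq:Mr0_A}--\eqref{eq:Mr0_F} are used in the paper to make the \emph{children} of an admissible set admissible (Lemma~\ref{lem:tagli}); they do not rigidify the level $k$ in the way your argument requires. The paper sidesteps the problem entirely by a much more restrictive choice: $\mathcal{R}'=\{Q^k_\alpha\times(-r_k,r_k)\tc k\ge k_0,\ \alpha\in I_k\}$ with $r_k=\frac{k}{2M}\log\rd$, so that \emph{all} the $A$-intervals are centred at $0$ and totally ordered by inclusion, with the interval an increasing function of the same index $k$ that labels the dyadic set; then intersecting sets are automatically nested, and property (ii) of \condC\ is obtained by enlarging $k$. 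To repair your proof, replace your grid-based $\mathcal{R}'$ by a family of this one-parameter type (or otherwise impose a rule that selects a \emph{unique} $k$ and a totally ordered chain of intervals).
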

\begin{proof}
By Proposition \ref{prp:proprieta} the family $\mathcal{R}$ of admissible sets in $G$ satisfies conditions \ref{en:CZadm1}-\ref{en:CZadm2} of Definition \ref{dfn:CZdef}.

Let now $f$ be in $L^1(G)$ and $\alpha > 0$. Our purpose is to construct a \CZ\ decomposition of $f$ at height $\alpha$. Let $\mathcal{P}$ be a quasi-partition of $G$ in big admissible sets whose measure is greater than $\frac{\|f\|_{1}}{\alpha}$  (it does exist by Lemma \ref{lem:partizionegrande}). For each $R$ in $\mathcal{P}$ we have that $\frac{1}{\mu({R})}\int_R|f|\di\mu < \alpha$.

Let $\mathcal{B} = \{ R \in \desc(\mathcal{P}) \tc \mu(R)^{-1} \int_R |f| \di\mu \geq \alpha\}$. We define the family $\mathcal{C}$ of the stopping sets as follows:
\[
\mathcal{C} = \{ R \in \mathcal{B} \tc R' \notin \mathcal{B} \text{ for all } R' \in \desc(\mathcal{P}) \text{ such that } R \subsetneq R'\}.
\]
By Lemma \ref{lem:partitionsplitting}\ref{en:partitionsplitting0} it is clear that the elements of $\mathcal{C}$ are pairwise disjoint. On the other hand $\bigcup \mathcal{C} = \bigcup \mathcal{B}$; therefore, if $\Omega$ is the complement of $\bigcup \mathcal{C}$ in $G$, then
\begin{equation}\label{eq:maximalbound}
M^{\mathcal{P}} f(x) \leq \alpha \qquad\text{for all } x \in \Omega.
\end{equation}
Further, for all $R \in \mathcal{C}$, it is $R \in \mathcal{B}$, hence $R \notin \mathcal{P}$ and consequently $R$ is the child of some $R' \in \desc(\mathcal{P}) \setminus \mathcal{B}$; therefore
\begin{equation}\label{eq:mediabound}
\alpha \leq \mu(R)^{-1} \int_R |f| \di\mu \leq C_2 \mu(R')^{-1} \int_{R'} |f| \di\mu < C_2 \alpha
\end{equation}
by Lemma \ref{lem:tagli}\ref{en:tagli3}.

Define
\[
g=\sum_{E \in \mathcal{C}} \Big(\frac{1}{\mu(E)}\int_{E}f\di\mu \Big)\,\chi_{E} +f \chi_{\Omega} \qquad\text{and}\qquad b_E=\Big(f-\frac{1}{\mu(E)}\int_{E}f\di\mu \Big)\,\chi_{E}
\]
for all $E \in \mathcal{C}$.
By \eqref{eq:mediabound} it follows that $|g|\leq C_2 \alpha$ on each set $E \in \mathcal{C}$. Moreover, by \eqref{eq:maximalbound} and Proposition \ref{prp:maximal}\ref{en:maximal3},
\[
|g(x)|=|f(x)|\leq \alpha \qquad\text{for a.a.\ } x\in \Omega.
\]
Each function $b_E$ is supported in $E$ and has average zero. Moreover
\[
\sum_{E \in \mathcal{C}} \|b_E\|_{1} \leq 2\,\sum_{E \in \mathcal{C}} \int_{E}|f|\di\mu \leq 2\,\|f\|_{1}.
\]
Finally, again by \eqref{eq:mediabound} and disjointness of $\mathcal{C}$,
\[
\sum_{E \in \mathcal{C}} \mu(E) \leq\frac{1}{\alpha}\,\sum_{E \in \mathcal{C}} \int_{E}|f|\di\mu\leq\frac{1}{\alpha}\,\,\|f\|_{1}\,.
\]

Thus $f=g+ \sum_{E \in \mathcal{C}} b_E$ is a \CZ\ decomposition of the function $f$ at height $\alpha$. The CZ-constant of the space is $\kappa_0=\max \{C_1,C_2,C^*\}$.

To conclude the proof of the theorem we shall construct a family of admissible sets $\mathcal R'$ which satisfies condition \condC. To do so, for all $k \in \ZZ^+$ define $r_k = \frac{k}{2M} \log \eta$. Then clearly $e^{2Mr_k}\leq \rd ^k<4e^{8Mr_k}$ and $r_k \to \infty$ as $k \to \infty$, so $r_k \geq r_0$ if $k \geq k_0$, say. Consequently, for all $k \geq k_0$ and $\alpha \in I_k$, the sets
\begin{equation}\label{eq:Rkm}
R^k_{\alpha}=Q^k_{\alpha}\times (-{r_k},r_k)
\end{equation}
are admissible. Set $\mathcal R'=\{ R^k_{\alpha} \tc k \geq k_0, \alpha \in I_k \}$. The following properties are satisfied:
\begin{enumerate}[label=(\roman*)]
\item If $R^k_{\alpha}\cap R^{\ell}_{\beta}\neq\emptyset$ and $k>\ell$, then $R^{\ell}_{\beta}\subset R^k_{\alpha}$.
\item If $R=Q_{\beta}^{\ell}\times (u_0-r,u_0+r)$ is an admissible set, then there exist $k \geq k_0$ and $\alpha\in I_k$ such that $R\subset R^k_{\alpha}$. Indeed, we may choose $k\geq \max\{\ell,k_0\}$ such that $(u_0-r,u_0+ r)\subset (-{r_k},r_k)$. In this case, there exists $\alpha\in I_{k}$ such that $Q_{\beta}^{\ell}\subset Q^k_{\alpha}$.
\end{enumerate}
Thus condition \condC\ is satisfied.
\end{proof}

Since by Theorem \ref{thm:CZ} the space $(G,\varrho,\mu)$ satisfies condition \condC, we can define a Hardy space $H^1(G)$ and a space $BMO(G)$ as in Definitions \ref{dfn:Hardy} and \ref{dfn:BMO}. By using the geometric properties of $(G,\varrho,\mu)$ and the properties of admissible sets, one can easily check that all the results obtained in \cite{V3} and \cite{LVY} for Hardy and $BMO$ spaces on $ax+b$-groups can be proved also in our setting, with only slight changes in their proofs (see, e.g., \cite{BL} for definition and discussion of the real and complex interpolation methods).

\begin{prop}\label{prp:H1BMOprop}
The following hold:
\begin{enumerate}[label=(\roman*)]
\item (John--Nirenberg inequality) there exist two positive constants $\gamma$ and $D$ such that for any $s>0$, $R\in\mathcal R$ and $g\in BMO(G)$,
\[
\mu\big(\{x\in R:~|g(x)-g_R|>s\,\|g\|_{BMO}\}\big)\leq D\,e^{-\gamma \,s}\,\mu({R});
\]
\item $\big(H^1(G),L^{2}(G)  \big)_{\theta,p}=L^p(G)\,,$ where $\theta\in (0,1)$, $\frac1p=1-\frac{\theta}{2}$ and $\big(\cdot, \cdot  \big)_{\theta,p}$ denotes the interpolation space obtained by the real method;
\item $\big(H^1(G),L^{2}(G)  \big)_{[\theta]}=L^p(G)\,,$ where $\theta\in (0,1)$, $\frac1p=1-\frac{\theta}{2}$ and $\big(\cdot, \cdot  \big)_{[\theta]}$ denotes the interpolation space obtained by the complex method;
\item $\big(L^{2}(G), BMO(G)  \big)_{\theta,p}=L^p(G)\,,$ where $\theta\in (0,1)$, $\frac1p=\frac{1-\theta}{2}$;
\item $\big(L^{2}(G) ,BMO(G) \big)_{[\theta]}=L^p(G)\,,$ where $\theta\in (0,1)$, $\frac1p=\frac{1-\theta}{2}$.
\end{enumerate}
\end{prop}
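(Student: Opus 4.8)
The plan is to observe that the proofs of the corresponding statements for $ax+b$-groups in \cite{V3} and \cite{LVY} use only a handful of structural features of the underlying space, all of which have been established for $(G,\varrho,\mu)$ in Section~\ref{subs:CZG}, and then to transcribe those proofs. The features in question are: the CZ-space structure together with condition \condC\ and the associated nested ``dyadic'' family $\mathcal R'$ (Theorem~\ref{thm:CZ}); the $H^1$--$BMO$ duality (Proposition~\ref{prp:H1BMOduality}); the subdivision of any admissible set into at most $J$ admissible children of comparable measure (Lemma~\ref{lem:tagli}) and the resulting nested family of descendants with its Lebesgue differentiation theorem (Lemma~\ref{lem:partitionsplitting} and Proposition~\ref{prp:maximal}\ref{en:maximal3}); the weak type $(1,1)$ bound for the maximal operator $M^{\mathcal P}$ (Proposition~\ref{prp:maximal}\ref{en:maximal2}); the existence, for every $\sigma>0$, of a quasi-partition of $G$ into big admissible sets of measure $>\sigma$ (Lemma~\ref{lem:partizionegrande}); and the elementary inclusion $H^1(G)\hookrightarrow L^1(G)$, which follows at once from Cauchy--Schwarz applied to atoms.

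For the John--Nirenberg inequality~(i) I would carry out the classical iterated stopping-time argument inside a fixed admissible set $R_0$, with the family of descendants of $R_0$ playing the role of the dyadic subcubes. Fixing $g\in BMO(G)$ with $\|g\|_{BMO}\le1$ and replacing $g$ by $g-g_{R_0}$, for a large parameter $\lambda>C_2$ one selects the maximal descendants of $R_0$ on which the mean of $|g|$ exceeds $\lambda$: by disjointness and the $BMO$ bound their total measure is $\le\lambda^{-1}\mu(R_0)$, on the complement $|g|\le\lambda$ a.e.\ by differentiation, and on each selected set $R$ the mean of $g$ has changed by at most $C_2\lambda$ because the parent $R'$ of $R$ satisfies $\mu(R')^{-1}\int_{R'}|g|\le\lambda$ and $\mu(R)\ge C_2^{-1}\mu(R')$ (Lemma~\ref{lem:tagli}\ref{en:tagli3}). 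Iterating inside each selected set and summing the geometric series yields $\mu(\{x\in R_0:|g(x)|>k(C_2+1)\lambda\})\le\lambda^{-k}\mu(R_0)$ for every $k\in\NN$, whence the exponential bound with suitable $\gamma$ and $D$ follows by considering $s$ between consecutive levels. The one place where the ambient geometry enters is that the constant $C_2$ of Lemma~\ref{lem:tagli}\ref{en:tagli3} now plays the part of the doubling constant; everything else is formal.

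For the interpolation identities~(ii)--(v) I would again follow \cite{V3,LVY}. In each case one inclusion is automatic from $H^1(G)\hookrightarrow L^1(G)$, e.g.\ $[H^1,L^2]_{[\theta]}\subset[L^1,L^2]_{[\theta]}=L^p$ and $(H^1,L^2)_{\theta,p}\subset(L^1,L^2)_{\theta,p}=L^p$, so the substance is in the reverse inclusions $L^p\subset(H^1,L^2)_{\theta,p}$ and $L^p\subset[H^1,L^2]_{[\theta]}$. For the real method I would estimate $K(t,f;H^1,L^2)$ from above by decomposing $f\in L^p$ at the dyadic heights $\alpha=2^k$ via the Calderón--Zygmund decomposition of Theorem~\ref{thm:CZ} into a bounded ($L^2$-controlled) part and a finite superposition of multiples of atoms ($H^1$-controlled), exactly as in \cite{V3}; the complex interpolation identities follow similarly (or are deduced from the real ones, as in \cite{V3,LVY}). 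The two statements involving $BMO(G)$ are then obtained by duality from the ones involving $H^1(G)$, using $BMO(G)=(H^1(G))^*$ (Proposition~\ref{prp:H1BMOduality}), the density of $H^1_{\mathrm{fin}}(G)$, the duality theorems for the complex and real methods (see \cite{BL}), and the John--Nirenberg inequality~(i), which guarantees that the $BMO$ norm is equivalent to its $L^q$-analogues and is therefore compatible with the dual characterization of atomic $H^1$.

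I expect the main obstacle to be the reverse inclusion in the \emph{real} interpolation identities, i.e.\ the two-sided control of $K(t,f;H^1,L^2)$: in \cite{V3,LVY} this estimate is intertwined with a sharp-maximal-function argument that in the Euclidean/Riemannian setting leans on the dyadic cube structure, whereas here $\mathcal R$ is a family of non-nested sets of exponentially growing measure. The relevant substitutes are condition \condC\ (the nested subfamily $\mathcal R'$ into which every admissible set embeds), the containments and doubling-type bound of Proposition~\ref{prp:proprieta}, and the large big-admissible quasi-partitions of Lemma~\ref{lem:partizionegrande}; once one checks that these are all the geometric inputs the argument actually consumes, the proofs of \cite{V3,LVY} go through with only notational changes.
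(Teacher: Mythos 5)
Your proposal takes exactly the route the paper does: the paper offers no detailed argument for this proposition, merely noting that the proofs in \cite{V3} and \cite{LVY} for $ax+b$-groups carry over with slight changes thanks to the properties of admissible sets established in Section~\ref{subs:CZG}. Your identification of the structural inputs (Lemma~\ref{lem:tagli}, Lemma~\ref{lem:partitionsplitting}, Proposition~\ref{prp:maximal}, condition \condC, the CZ decomposition, and $H^1$--$BMO$ duality) and your sketches of the stopping-time and $K$-functional arguments are a correct and more explicit rendering of that same transplantation.
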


\section{The \texorpdfstring{sub-Laplacian $\Delta$,}{sub-Laplacian,} its heat kernel, and its spectral multipliers}\label{s:sub-Laplacian}

\subsection{The sub-Laplacian}
Let $\Delta$ be the sub-Laplacian defined in \eqref{eq:sub-Laplacian}. We recall now some well-known properties of $\Delta$, that are common to all left-invariant sub-Laplacians on Lie groups (see, e.g., \cite{varopoulos_analysis_1992}, \cite{martini_spectral_2011}, and references therein for further details).

Since the horizontal distribution on $G$ is bracket-generating, $\Delta$ is hypoelliptic \cite{hrmander_hypoelliptic_1967}. Moreover $\Delta$ is essentially self-adjoint and positive with respect to the right Haar measure; in fact, for all $f,g \in C^\infty_c(G)$,
\begin{equation}\label{eq:dirichlet}
\langle \Delta f, g \rangle = \sum_{j=0}^\fdim \langle \vfG_j f , \vfG_j g \rangle,
\end{equation}
where $\langle \cdot,\cdot \rangle$ denotes the inner product of $L^2(G)$.

In particular $\Delta$ extends uniquely to a positive self-adjoint operator on $L^2(G)$ and for all bounded Borel functions $F : \Rnon \to \CC$, the operator $F(\Delta)$ is a convolution operator with kernel $k_{F(\Delta)}$ (see \eqref{eq:convolutionkernel}). By means of the convolution formula, when $k_{F(\Delta)} \in L^1_\loc(G)$, we can interpret $F(\Delta)$ as an integral operator with integral kernel $K_{F(\Delta)}$ given by
\begin{equation}\label{eq:integralkernel}
K_{F(\Delta)}(x,y) =k_{F(\Delta)}(y^{-1}x)\,m(y) \qquad \text{for a.a. } x,y\in G.
\end{equation}

In the sequel we will often make use of some properties of differential equations associated with $\Delta$. First of all, we have finite propagation speed \cite{melrose_propagation_1986,cowling_subfinsler_2013} for solutions of the wave equation:
\[
\supp (\cos(t \sqrt{\Delta}) f) \subset \{ x \in G \tc \dist(x,\supp f) \leq t\}
\]
for all $f \in L^2(G)$ and all $t \geq 0$.

Moreover, since $\Delta$ is associated to the Dirichlet form \eqref{eq:dirichlet} and annihilates constants, the heat kernel $t \mapsto h_t = k_{e^{-t\Delta}}$ is a semigroup of probability measures on $G$ \cite{HU}. By hypoellipticity of $\partial_t + \Delta$, the distribution $(t,x) \mapsto h_t(x)$ is in fact a smooth function on $\Rpos \times G$ and from the above discussion it follows that
\[
h_t * h_{t'} = h_{t+t'}, \qquad h_t \geq 0, \qquad \| h_t \|_1 = 1
\]
(semigroup of probability measures) and
\[
h_t^* = h_t, \qquad h_t(x) \leq m(x)^{1/2}  h_t(0)
\]
(self-adjointness and positivity on $L^2$). It is also possible to obtain ``Gaussian-type'' estimates for $h_t$ and its left-invariant derivatives: for all $p \in [1,\infty]$, $\alpha = (\alpha_0,\dots,\alpha_\fdim) \in \NN^{1+\fdim}$ and all $b \geq 0$ there exist $C,\omega \geq 0$ such that
\begin{equation}\label{eq:smalltimeheatkernelestimates}
\| e^{b |\cdot|_\dist} \vfG^\alpha h_t \|_{p} \leq C t^{-(Q+1)/(2p')-|\alpha|/2} e^{\omega t},
\end{equation}
where $p' = p/(p-1)$, $\vfG^\alpha = \vfG_0^{\alpha_0} \cdots \vfG_\fdim^{\alpha_\fdim}$, and $|\alpha| = \alpha_0+\dots+\alpha_\fdim$ (see, e.g., \cite{varopoulos_analysis_1992}, \cite{ter_elst_weighted_1998}, or \cite[Theorem 2.3(f)]{martini_spectral_2011}). These estimates are however of little use for $t$ large.

\subsection{\texorpdfstring{$L^1$}{L1} gradient heat kernel estimates}\label{subs:heat}
The heat kernel $h_t$ associated to $\Delta$ can be expressed in terms of the heat kernel $h_t^N$ associated to the sub-Laplacian $\Delta^N=-\sum_{j=1}^\fdim \vfN_j^2$ on $N$ (see \cite[\S 3]{mustapha_multiplicateurs_1998} or \cite[\S 2]{gnewuch_differentiable_2006}):
\begin{equation}\label{eq:integral}
h_t(z,u) = \int_0^\infty \Psi_t(\xi) \, \exp\left(-\frac{\cosh u}{\xi}\right) h^N_{e^u \xi /2}(z) \di\xi,
\end{equation}
where
\begin{equation}\label{eq:defPsi}
\Psi_t(\xi) = \frac{\xi^{-2}}{\sqrt{4\pi^3 t}} \exp\left(\frac{\pi^2}{4t} \right) \int_0^\infty \sinh\theta \, \sin\frac{\pi\theta}{2t} \,\exp\left(-\frac{\theta^2}{4t} -\frac{\cosh \theta}{\xi}\right) \di\theta.
\end{equation}
This formula was used in the aforementioned works to obtain $L^1$-estimates for the heat kernel $h_t$ at complex times $t = 1+i\tau$, $\tau \in \RR$. Here we will show that the same formula can be used to obtain $L^1$-estimates for the horizontal gradient of the heat kernel $h_t$ at real times $t \in \RR$.

For a (smooth) function $f$ on $G$ we define the horizontal gradient $\nabla_H f(x) \in H_x G$ at $x \in G$ by
\[
g_x(\nabla_H f(x),v) = (\di f)_x(v)  \qquad\forall v \in H_x G,
\]
where $(\di f)_x$ is the differential of $f$ at $x$. It is easily seen that
\begin{equation}\label{eq:nablaH}
|\nabla_H f(x)|^2_g  = g_x(\nabla_H f(x),\nabla_H f(x)) = \sum_{j=0}^\fdim |\vfG_j f(x)|^2.
\end{equation}
In order to estimate the $L^1$-norm of $|\nabla_Hh_t|_g$ we shall repeatedly use the following technical lemma.
\begin{lem}\label{lem:innerintegral}
For all $\alpha,\theta \geq 0$,
\[
\int_\RR \int_0^\infty \frac{\cosh(\alpha u)}{\xi^{2+\alpha}} \,   \exp\left(-\frac{\cosh \theta+\cosh u}{\xi}\right) \di\xi \di u \leq C_\alpha \begin{cases}
e^{-\theta} &\text{if $\alpha > 0$,}\\
e^{-\theta} (1+\theta) &\text{if $\alpha = 0$.}
\end{cases}
\]
\end{lem}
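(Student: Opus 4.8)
The plan is to perform the $\xi$-integration in closed form first, and then reduce the statement to an elementary one-dimensional estimate.

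First I would evaluate the inner integral over $\xi$. Since the integrand is nonnegative, Tonelli's theorem allows us to integrate in $\xi$ first; for fixed $u,\theta$ set $a=\cosh\theta+\cosh u>0$ and substitute $s=a/\xi$. This turns $\int_0^\infty \xi^{-2-\alpha}\exp(-a/\xi)\di\xi$ into $a^{-1-\alpha}\int_0^\infty s^{\alpha}e^{-s}\di s=\Gamma(\alpha+1)\,a^{-1-\alpha}$. Hence the quantity to be bounded equals
\[
\Gamma(\alpha+1)\int_\RR \frac{\cosh(\alpha u)}{(\cosh\theta+\cosh u)^{1+\alpha}}\di u ,
\]
and it remains to estimate this last integral, which I will call $I_\alpha(\theta)$.

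Next, using that the integrand is even in $u$, I would write $I_\alpha(\theta)=2\int_0^\infty(\cdots)\di u$ and split the $u$-range at $u=\theta$. On the inner region $0\le u\le\theta$ I use $\cosh\theta+\cosh u\ge\cosh\theta\ge\tfrac12 e^{\theta}$ together with $\int_0^\theta\cosh(\alpha u)\di u=\sinh(\alpha\theta)/\alpha$ (which equals $\theta$ when $\alpha=0$); this contributes at most $C_\alpha e^{-\theta}$ when $\alpha>0$ and at most $C\,\theta\, e^{-\theta}$ when $\alpha=0$. On the outer region $u>\theta$ I use $\cosh\theta+\cosh u\ge\cosh u\ge\tfrac12 e^{u}$ and $\cosh(\alpha u)\le e^{\alpha u}$, so that the integrand is $\lesssim e^{-u}$ pointwise and the tail contributes at most $C_\alpha e^{-\theta}$. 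Summing the two pieces yields the claimed bound, the extra factor $1+\theta$ appearing precisely in the case $\alpha=0$ because of the term $\theta\, e^{-\theta}$.

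Everything here is elementary, so there is no serious obstacle; the only point that requires a little care is arranging the splitting so that the decay rate in $\theta$ is exactly $e^{-\theta}$ rather than $e^{-c\theta}$ with $c<1$. This is why the cut is placed at $u=\theta$ and the crude bound $\cosh u\ge\tfrac12 e^{u}$ (rather than, say, a geometric-mean bound mixing $\theta$ and $u$) is used on the outer region. Tracking the dependence of the constants on $\alpha$ costs nothing, since $C_\alpha$ is allowed to depend on $\alpha$; note only that these constants blow up as $\alpha\to 0^{+}$, consistently with the logarithmic-type loss $1+\theta$ in the case $\alpha=0$.
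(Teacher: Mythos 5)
Your proof is correct and follows essentially the same route as the paper's: both evaluate the $\xi$-integral exactly (yielding $\Gamma(\alpha+1)(\cosh\theta+\cosh u)^{-1-\alpha}$) and then estimate the remaining one-dimensional integral in $u$. The paper performs that last elementary step via the substitution $v=e^{u-\theta}$, reducing it to $e^{-\theta}\int_{e^{-\theta}}^\infty(1+v)^{-1-\alpha}v^{\alpha-1}\di v$, which is just a change-of-variables version of your splitting at $u=\theta$.
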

\begin{proof}
The inner integral in $\xi$ is convergent and by rescaling it is equal to a constant times $(\cosh \theta + \cosh u)^{-1-\alpha} \cosh(\alpha u)$. Consequently the integral in $u$ is controlled by a constant times
\[
\int_0^\infty (e^\theta + e^u)^{-1-\alpha} \, e^{\alpha u}  \di u 
= e^{-\theta} \int_{e^{-\theta}}^\infty (1 + v)^{-1-\alpha} \, v^{\alpha-1} \di v
\]
and the conclusion follows.
\end{proof}

\begin{prop}\label{prp:gradientestimates}
There exists $C > 0$ such that
\[
\left\| \left|\nabla_H h_t\right|_g\right\|_{1} \leq C t^{-1/2}\qquad \forall t \in \Rpos.
\]
 \end{prop}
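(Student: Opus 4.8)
The plan is to estimate $|\nabla_H h_t|_g$ directly from the integral representation \eqref{eq:integral}, differentiating under the integral sign and splitting the horizontal gradient on $G$ into its $\vfG_0$-component (the $\partial_u$ direction) and the $\vfG_1,\dots,\vfG_\fdim$-components (which by the lift formula equal $e^u \vfN_j$ applied in the $z$-variable). By \eqref{eq:nablaH} we have $|\nabla_H h_t|_g^2 = |\partial_u h_t|^2 + e^{2u} |\nabla_H^N h_t(\cdot,u)|_{g^N}^2$, so it suffices to bound $\|\partial_u h_t\|_1$ and $\|e^u \, |\nabla_H^N h_t|_{g^N}\|_1$ separately, each by $C t^{-1/2}$.

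First I would treat the ``$N$-gradient'' term. Differentiating \eqref{eq:integral} in the $z$-variable gives
\[
\nabla_H^N h_t(z,u) = \int_0^\infty \Psi_t(\xi) \, \exp\!\left(-\frac{\cosh u}{\xi}\right) \nabla_H^N h^N_{e^u \xi/2}(z) \di\xi,
\]
so that, using $\di\mu(z,u) = \di z\di u$ and the small-time gradient estimate for the heat kernel on $N$ together with the Gaussian-type pointwise bound $|\nabla_H^N h^N_s(z)|_{g^N} \lesssim s^{-1/2-Q/2} \exp(-c|z|_N^2/s)$ (valid for all $s>0$ since $\Delta^N$ is a homogeneous sub-Laplacian), integration in $z$ over $N$ via \eqref{eq:radial_N} yields $\| |\nabla_H^N h^N_s|_{g^N} \|_{L^1(N)} \lesssim s^{-1/2}$. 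Hence
\[
\big\| e^u |\nabla_H^N h_t|_{g^N} \big\|_{L^1(G)} \lesssim \int_\RR \int_0^\infty \Psi_t(\xi)\, e^u \exp\!\left(-\frac{\cosh u}{\xi}\right) (e^u\xi)^{-1/2} \di\xi\di u.
\]
The factor $e^{u/2}\exp(-\cosh u/\xi)$ is where the extra $e^u$ from the lift must be absorbed; I would pull out $\Psi_t(\xi)$ and estimate the $u$-integral by hand (it is comparable to $\xi^{1/2}$ after the substitution $v = e^u$, uniformly in $\xi$ up to the $\cosh$ versus $\exp$ discrepancy), reducing everything to $\int_0^\infty \Psi_t(\xi)\di\xi$, which one checks is $\lesssim t^{-1/2}$ directly from \eqref{eq:defPsi} using $|\sin(\pi\theta/2t)| \le 1$ and the rescaling trick of Lemma \ref{lem:innerintegral}. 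For the $\partial_u$ term, differentiating \eqref{eq:integral} in $u$ hits both the $\exp(-\cosh u/\xi)$ factor (producing $-\sinh u/\xi$) and the $e^u\xi/2$ inside $h^N$ (producing a $\partial_s$-derivative of the heat kernel, i.e.\ a term like $\tfrac{e^u\xi}{2}(\partial_s h^N_s)(z)$ with $s = e^u\xi/2$); using $\partial_s h^N_s = -\Delta^N h^N_s$ and the pointwise Gaussian bounds on $\Delta^N h^N_s$, again of self-improving homogeneous type, one gets $\|\Delta^N h^N_s\|_{L^1(N)} \lesssim s^{-1}$, so after multiplying by $s = e^u\xi/2$ the resulting contribution to $\|\partial_u h_t\|_1$ is again controlled by $\int_\RR\int_0^\infty \Psi_t(\xi) \cosh(u)\xi^{-1}\exp(-\cosh u/\xi)\,\di\xi\di u$ plus a similar term; both are handled by Lemma \ref{lem:innerintegral} (with $\theta=0$, or by inserting the $\cosh\theta$ from \eqref{eq:defPsi}) and reduce once more to $\int_0^\infty\Psi_t(\xi)\di\xi \lesssim t^{-1/2}$.

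The main obstacle is the bookkeeping of the $e^u$ factors coming from the lift $\vfG_j = e^u\vfN_j$ against the decay $\exp(-\cosh u/\xi)$ in \eqref{eq:integral}: a naive estimate loses an exponential in $u$, so one must be careful to keep the $h^N$-time $s = e^u\xi/2$ paired correctly with the $z$-integration scaling $\|\,\cdot\,\|_{L^1(N)} \lesssim s^{-1/2}$ so that the stray $e^u$'s are absorbed into powers of $s$ and the remaining $u$-integral converges uniformly in $\xi$. Once that is organized, the whole estimate collapses onto the single bound $\int_0^\infty \Psi_t(\xi)\di\xi \lesssim t^{-1/2}$, which is essentially the statement that $h_t$ itself is an $L^1$-normalized probability density with the right time scaling, combined with Lemma \ref{lem:innerintegral}. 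I would present the argument by first isolating the clean pointwise bounds $\||\nabla_H^N h^N_s|_{g^N}\|_{L^1(N)}\lesssim s^{-1/2}$ and $\|\Delta^N h^N_s\|_{L^1(N)}\lesssim s^{-1}$ (both immediate from homogeneity and small-time Gaussian estimates), then performing the two $u$-integrals via Lemma \ref{lem:innerintegral}, and finally bounding $\int_0^\infty\Psi_t\di\xi$.
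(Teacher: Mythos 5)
Your overall strategy is the paper's: split $|\nabla_H h_t|_g$ into the $\vfG_0$- and $\vfG_j$-components ($j\geq 1$), differentiate the representation \eqref{eq:integral} under the integral sign, use the homogeneity identities $\|h^N_s\|_{L^1(N)}=1$, $\|\vfN_j h^N_s\|_{L^1(N)}\sim s^{-1/2}$, and reduce to iterated integrals in $\xi,u,\theta$ handled by Lemma \ref{lem:innerintegral}. The $j\geq 1$ terms and the term where $\partial_u$ hits $\exp(-\cosh u/\xi)$ go through essentially as you describe. But there are two genuine problems. The first and most serious is your treatment of the term where $\partial_u$ hits the heat-kernel time $s=e^u\xi/2$. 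Writing $\tfrac{e^u\xi}{2}\partial_s h^N_s=-s\,\Delta^N h^N_s$ and using $\|s\,\Delta^N h^N_s\|_{L^1(N)}\lesssim 1$ leaves you with $\int_\RR\int_0^\infty|\Psi_t(\xi)|\exp(-\cosh u/\xi)\di\xi\di u$, and here Lemma \ref{lem:innerintegral} must be applied with $\alpha=0$, which produces the extra factor $(1+\theta)$. Since $\int_0^\infty \tfrac{\theta}{t}(1+\theta)e^{-\theta^2/4t}\di\theta\sim t^{1/2}$ for large $t$, this term comes out as $O(1)$ rather than $O(t^{-1/2})$: taking absolute values at this stage destroys exactly the cancellation you need. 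The paper's way around this is to rewrite $\partial_u[h^N_{e^u\xi/2}]=\xi\,\partial_\xi[h^N_{e^u\xi/2}]$, integrate by parts in $\xi$ so that the derivative lands on $\xi\,\Psi_t(\xi)$, and then integrate by parts again in $\theta$ inside the definition \eqref{eq:defPsi}; the second integration by parts differentiates $\sin\frac{\pi\theta}{2t}$ and produces an explicit extra factor $\tfrac{1}{2t}$, which is precisely what compensates the $t^{1/2}$ loss. This double integration by parts is the key idea of the proof and is absent from your proposal.

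The second problem is the claim that everything collapses onto $\int_0^\infty\Psi_t(\xi)\di\xi\lesssim t^{-1/2}$ ``using $|\sin(\pi\theta/2t)|\leq 1$''. With $|\sin|\leq 1$ the $\theta$-integral $\int_0^\infty e^{-\theta^2/4t}\di\theta\sim t^{1/2}$ cancels the prefactor $t^{-1/2}$ and you only get $O(1)$; the bound that actually yields $t^{-1/2}$ is $|\sin\frac{\pi\theta}{2t}|\leq\frac{\pi\theta}{2t}$, which gives $\int_0^\infty\frac{\theta}{t}e^{-\theta^2/4t}\di\theta=O(1)$. Moreover, for $t\leq 1$ the factor $e^{\pi^2/4t}$ in \eqref{eq:defPsi} blows up and no absolute-value estimate of this kind can work; you must (as the paper does) dispose of small times separately via the standard Gaussian estimates \eqref{eq:smalltimeheatkernelestimates} and run the integral-formula argument only for $t\geq 1$. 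Also, the $u$- and $\xi$-integrals do not decouple from the $\theta$-integral because of the factor $\exp(-(\cosh\theta+\cosh u)/\xi)$, so the correct order of operations is: fix $\theta$, apply Lemma \ref{lem:innerintegral} to the $(u,\xi)$-integral, and only then integrate in $\theta$ — not ``pull out $\Psi_t$'' as you suggest.
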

\begin{proof}
By \eqref{eq:nablaH} it suffices to show that
\[
\|\vfG_j h_t\|_{1} \leq C t^{-1/2}
\]
for all $j \in \{0,1,\dots,\fdim\}$ and $t \in \Rpos$. These estimates are already known in the case $t \leq 1$, see \eqref{eq:smalltimeheatkernelestimates}. Therefore in the rest of the proof we will assume that $t \geq 1$.

Note that, by homogeneity considerations, the corresponding estimates for the heat kernel on $N$ are easily shown to hold for all times. In fact
\begin{equation}\label{eq:n_heat_estimates}
\|h_s^N\|_{L^1(N)} = 1, \qquad \|\vfN_j h_s^N\|_{L^1(N)} = C_j s^{-1/2}
\end{equation}
for all $s \in \Rpos$ and $j\in\{1,\dots,\fdim\}$ (see, e.g., \cite[Proposition (1.75)]{folland_hardy_1982}). These equations, together with the formula \eqref{eq:integral}, will be the main ingredient of our proof.

We consider first the case $j>0$. Recall that $\vfG_j = e^{u} \vfN_j$. Then, by \eqref{eq:integral} and differentiation under the integral sign,
\[
\vfG_j h_t(z,u) = \int_0^\infty \Psi_t(\xi) \, \exp\left(-\frac{\cosh u}{\xi}\right) e^{u} \vfN_j h^N_{e^u \xi /2}(z) \di\xi.
\]
Therefore, by \eqref{eq:n_heat_estimates},
\[
\|\vfG_j h_t\|_1 \lesssim \int_\RR \int_0^\infty |\Psi_t(\xi)| \, \frac{e^{u/2}}{\xi^{1/2}} \exp\left(-\frac{\cosh u}{\xi}\right) \di\xi \di u.
\]
Since $t\geq 1$, by \eqref{eq:defPsi} the above integral is controlled by a constant times
\begin{multline*}
t^{-1/2} \int_0^\infty \sinh\theta \, \left|\sin\frac{\pi\theta}{2t} \right| \,\exp\left(-\frac{\theta^2}{4t}\right) \\
\times \int_\RR \int_0^\infty \frac{e^{u/2}}{\xi^{2+1/2}} \exp\left(-\frac{\cosh \theta+\cosh u}{\xi}\right) \di\xi \di u \di\theta.
\end{multline*}
By applying Lemma \ref{lem:innerintegral} (with $\alpha=1/2$), the integral in $u$ is controlled by a constant times $e^{-\theta}$, hence
\[
\|\vfG_j h_t\|_1 \lesssim t^{-1/2} \int_0^\infty \frac{\sinh\theta}{e^\theta} \, \frac{\theta}{t}  \,\exp\left(-\frac{\theta^2}{4t}\right) \di \theta \lesssim t^{-1/2}.
\]

For $j=0$ we have instead, again by \eqref{eq:integral},
\[\begin{split}
\vfG_0 h_t(z,u) &= -\int_0^\infty \Psi_t(\xi) \, \frac{\sinh u}{\xi} \, \exp\left(-\frac{\cosh u}{\xi}\right) h^N_{e^u \xi /2}(z) \di \xi \\
&\qquad + \int_0^\infty \Psi_t(\xi) \, \exp\left(-\frac{\cosh u}{\xi}\right) \frac{\partial}{\partial u} [h^N_{e^u \xi /2}(z)] \di \xi = I_1 + I_2.
\end{split}\]
The $L^1$-norm of the first summand $I_1$ can be controlled analogously as above (here the first identity in \eqref{eq:n_heat_estimates} is used and Lemma \ref{lem:innerintegral} is applied with $\alpha=1$). For the second term $I_2$, instead, we need some further manipulation.

Note that $\partial_u [h^N_{e^u \xi /2}(z)] = \xi \partial_\xi [h^N_{e^u \xi /2}(z)]$. Hence, by integration by parts,
\[\begin{split}
I_2 
&= -\int_0^\infty \frac{\partial}{\partial \xi}[\xi \, \Psi_t(\xi)] \, \exp\left(-\frac{\cosh u}{\xi}\right) \, h^N_{e^u \xi /2}(z) \di \xi \\
&\qquad-\int_0^\infty \Psi_t(\xi) \, \frac{\cosh u}{\xi} \exp\left(-\frac{\cosh u}{\xi}\right) \, h^N_{e^u \xi /2}(z) \di \xi = I_3 + I_4.
\end{split}\]
The term $I_4$ can be controlled in the same way as $I_1$.
As for $I_3$, we observe, by \eqref{eq:defPsi}, that
\[\begin{split}
\frac{\partial}{\partial \xi}&[\xi \, \Psi_t(\xi)] \\
&= \frac{\exp\left(\frac{\pi^2}{4t} \right)}{\xi^{2}\sqrt{4\pi^3 t}}  \int_0^\infty \sinh\theta \, \sin\frac{\pi\theta}{2t} \,\exp\left(-\frac{\theta^2}{4t} -\frac{\cosh \theta}{\xi}\right) \left( \frac{\cosh \theta}{\xi} - 1\right)\di \theta \\
&= -\frac{\exp\left(\frac{\pi^2}{4t} \right)}{\xi^{2}\sqrt{4\pi^3 t}} \int_0^\infty \cosh\theta \, \sin\frac{\pi\theta}{2t} \,\exp\left(-\frac{\theta^2}{4t}\right) \frac{\partial}{\partial \theta} \left[\exp\left(-\frac{\cosh \theta}{\xi}\right)\right] \di\theta - \Psi_t(\xi) \\
&= \frac{\exp\left(\frac{\pi^2}{4t} \right)}{\xi^{2}\sqrt{4\pi^3 t}} \int_0^\infty \frac{\partial}{\partial \theta} \left[\cosh\theta \, \sin\frac{\pi\theta}{2t} \,\exp\left(-\frac{\theta^2}{4t}\right)\right] \exp\left(-\frac{\cosh \theta}{\xi}\right) \di\theta - \Psi_t(\xi) \\
&= \frac{\exp\left(\frac{\pi^2}{4t} \right)}{2t \xi^{2}\sqrt{4\pi^3 t}} \int_0^\infty \cosh\theta \, \left[\pi \cos\frac{\pi\theta}{2t} -  \theta \sin\frac{\pi\theta}{2t}\right] \exp\left(-\frac{\theta^2}{4t}\right) \exp\left(-\frac{\cosh \theta}{\xi}\right) \di\theta.
\end{split}\]
Consequently, since $t \geq 1$, by \eqref{eq:n_heat_estimates} the $L^1$-norm of $I_3$ is bounded by a constant times
\begin{multline*}
t^{-3/2} \int_0^\infty \cosh\theta \, \left|\pi \cos\frac{\pi\theta}{2t} -  \theta \sin\frac{\pi\theta}{2t}\right| \exp\left(-\frac{\theta^2}{4t}\right) \\
\times \int_\RR \int_0^\infty \xi^{-2}  \exp\left(-\frac{\cosh \theta + \cosh u}{\xi}\right) \di\xi \di u \di\theta.
\end{multline*}
By applying Lemma \ref{lem:innerintegral} (with $\alpha=0$), the integral in $u$ is controlled by a constant times $e^{-\theta} (1+\theta)$, hence
\[
\|I_3\|_{1} \lesssim t^{-3/2} \int_0^\infty \frac{\cosh\theta}{e^\theta} (1 + \theta^2/t) \, (1+\theta) \, \exp\left(-\frac{\theta^2}{4t}\right) \di\theta \lesssim t^{-1/2}
\]
and we are done.
\end{proof}

\subsection{The Plancherel measure and weighted \texorpdfstring{$L^2$}{L2}-estimates}\label{subs:plancherel}

By abstract nonsense (see, e.g., \cite[Theorem 3.10]{martini_spectral_2011} for a quite general statement) one can show that there exists a Plancherel measure associated with $\Delta$, i.e., a positive Borel measure $\sigma_\Delta$ on $\Rnon$, whose support is the $L^2(G)$-spectrum of $\Delta$, such that
\begin{equation}\label{eq:planchereldef}
\| k_{F(\Delta)} \|_{2}^2 = \int_{\Rnon} |F(\lambda)|^2 \di\sigma_\Delta(\lambda)
\end{equation}
for all bounded Borel functions $F : \Rnon \to \CC$.

In the case $N$ is abelian, $G$ is a real hyperbolic space and the Plancherel measure $\sigma_\Delta$ can be explicitly computed via spherical analysis (cf.\ \cite{cowling_spectral_1994,cowling_estimates_1993}); namely there exists $c_\Delta \in \Rpos$ such that
\begin{equation}\label{eq:plancherel}
\int_\Rnon F(\lambda) \di \sigma_\Delta(\lambda) = c_\Delta \int_{\RR} F(s^2) \, |\hc_Q(s)|^{-2} \di s,
\end{equation}
where $\hc_Q$ is the Harish-Chandra function for the $(Q+1)$-dimensional real hyperbolic space (see, e.g., \cite[Theorem IV.6.14]{HE}), so
\[
|\hc_Q(s)|^{-2} \sim \begin{cases}
|s|^2 &\text{for $|s|$ small,}\\
|s|^Q &\text{for $|s|$ large.}
\end{cases}
\]

In the case $N$ is nonabelian, spherical analysis can no longer be directly applied to the functional calculus of $\Delta$. Nevertheless, as we are going to show, the above formula for the Plancherel measure remains valid.

Let $\mathcal{J}$ be the set of functions $\Rnon \to \CC$ that are finite linear combinations of decaying exponentials $\lambda \mapsto e^{-t\lambda}$ ($t \in \Rpos$). Note that $\mathcal{J}$ is uniformly dense in $C_0(\Rnon)$ by the Stone--Weierstrass theorem. The following fundamental observation is in \cite[Lemma (1.10)]{HEB}; here we provide an alternative proof using \eqref{eq:integral}.

\begin{prop}\label{prp:g_n_calculi}
For all $F \in \mathcal{J}$ and all $u \in \RR$, there exists a bounded Borel function $M_{F,u} : \Rnon \to \CC$ such that
\begin{equation}\label{eq:kernelsections}
k_{F(\Delta)}(\cdot,u) = k_{M_{F,u}(\Delta^N)}
\end{equation}
and $M_{F,u}$ does not depend on the stratified group $N$ or the sub-Laplacian $\Delta^N$.
\end{prop}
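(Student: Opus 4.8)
The plan is to exploit formula \eqref{eq:integral}, which expresses $h_t$ on $G$ as a superposition of rescaled heat kernels $h^N_s$ on $N$, together with the fact that rescaled heat kernels on $N$ are themselves functions of $\Delta^N$ via the dilation structure. First I would record the elementary observation that, for $s>0$, the dilated kernel $z\mapsto h^N_s(z)$ is exactly $k_{e^{-s\Delta^N}}$, and more generally that $h^N_{\lambda s} = \delta_{\lambda^{1/2}}(h^N_s)$ follows from homogeneity of $\Delta^N$ under the automorphic dilations of $N$; in spectral terms, $e^{-\lambda s \Delta^N}$ has the same convolution kernel as the dilate of $e^{-s\Delta^N}$. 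The key point is that each of these operators is a function of $\Delta^N$ and that this function is \emph{universal}: $e^{-s\Delta^N} = G_s(\Delta^N)$ where $G_s(\mu)=e^{-s\mu}$ does not reference $N$ at all.

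Next I would fix $F\in\mathcal{J}$, say $F(\lambda)=\sum_i c_i e^{-t_i\lambda}$, and $u\in\RR$, and compute the slice $k_{F(\Delta)}(\cdot,u)$. Since $k_{F(\Delta)} = \sum_i c_i h_{t_i}$, by linearity it suffices to treat a single exponential $F(\lambda)=e^{-t\lambda}$, so $k_{F(\Delta)}(\cdot,u) = h_t(\cdot,u)$. By \eqref{eq:integral},
\[
h_t(z,u) = \int_0^\infty \Psi_t(\xi)\,\exp\!\left(-\frac{\cosh u}{\xi}\right) h^N_{e^u\xi/2}(z)\di\xi.
\]
Now I write $h^N_{e^u\xi/2} = k_{\exp(-(e^u\xi/2)\Delta^N)}$ and pull the integral inside the functional calculus: formally,
\[
h_t(\cdot,u) = k_{M_{F,u}(\Delta^N)}, \qquad M_{F,u}(\mu) = \int_0^\infty \Psi_t(\xi)\,\exp\!\left(-\frac{\cosh u}{\xi}\right) \exp\!\left(-\frac{e^u\xi}{2}\mu\right)\di\xi,
\]
and in the general case $M_{F,u}$ is the corresponding finite sum over the $t_i$. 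This $M_{F,u}$ is manifestly a function of the spectral variable $\mu$ alone, with no reference to $N$ or $\Delta^N$, which is the asserted universality. To see that $M_{F,u}$ is a bounded Borel function one estimates the integral: $|\Psi_t(\xi)|$ is controlled using \eqref{eq:defPsi}, and the exponential factors give integrability in $\xi$ near $0$ (from $\exp(-\cosh u/\xi)$) and near $\infty$ (from $\exp(-e^u\xi\mu/2)$ when $\mu>0$, and from the $\xi^{-2}$ decay of $\Psi_t$ when $\mu=0$); uniform boundedness in $\mu\in\Rnon$ is then immediate since $\exp(-e^u\xi\mu/2)\le 1$.

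The main obstacle is making the interchange of the $\xi$-integral with the spectral-calculus (i.e.\ with convolution by $k_{\exp(-s\Delta^N)}$) rigorous, so that \eqref{eq:kernelsections} holds as an identity of, say, $L^2(N)$ functions or of distributions, rather than merely formally. I would handle this by testing against an arbitrary $\phi\in C_c^\infty(N)$: the pairing $\langle h_t(\cdot,u),\phi\rangle$ equals $\int_0^\infty \Psi_t(\xi)\exp(-\cosh u/\xi)\,\langle h^N_{e^u\xi/2},\phi\rangle\di\xi$ by Fubini, which is justified by the absolute integrability bound just discussed together with the uniform bound $\|h^N_s\|_1=1$; then $\langle h^N_{e^u\xi/2},\phi\rangle = \langle k_{\exp(-(e^u\xi/2)\Delta^N)},\phi\rangle$, and pulling the $\xi$-integral back out (again by Fubini, now using the spectral representation $\langle k_{G_s(\Delta^N)},\phi\rangle$ as an integral against the Plancherel measure of $\Delta^N$ and dominated convergence) identifies the result with $\langle k_{M_{F,u}(\Delta^N)},\phi\rangle$. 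Since $\phi$ is arbitrary this gives \eqref{eq:kernelsections}. The regularity needed to even speak of the slice $k_{F(\Delta)}(\cdot,u)$ is supplied by the smoothness of $h_t$ on $\Rpos\times G$ noted in Section~\ref{s:sub-Laplacian}.
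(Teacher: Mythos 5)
Your proposal is correct and follows essentially the same route as the paper: reduce by linearity to $F(\lambda)=e^{-t\lambda}$, define $M_{F,u}(\lambda)=\int_0^\infty \Psi_t(\xi)\exp(-\cosh u/\xi)\exp(-e^u\xi\lambda/2)\di\xi$, and read off \eqref{eq:kernelsections} from \eqref{eq:integral}. The paper's proof is terser (it omits the Fubini/testing justification you supply, which is a welcome addition), and the only blemish in your write-up is the incidental dilation identity $h^N_{\lambda s}=\delta_{\lambda^{1/2}}(h^N_s)$, which is stated without the Jacobian factor but is not actually used in the argument.
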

\begin{proof}
By linearity it is sufficient to consider the case where $F(\lambda) = e^{-t\lambda}$. However in this case, if we set
\[
M_{F,u}(\lambda) = \int_0^\infty \Psi_t(\xi) \, \exp\left(-\frac{\cosh u}{\xi}\right) \exp(-e^u \xi \lambda/2) \di\xi,
\]
then \eqref{eq:kernelsections} follows from the formula \eqref{eq:integral} for the heat kernel $h_t = k_{e^{-t\Delta}}$. Note that the above expression for $M_{F,u}$ depends only on $t$ and $u$ and does not depend on the particular choice of $N$ or $\Delta^N$.
\end{proof}

Let $\Delta^{\RR^Q}$ be the Laplacian on $\RR^Q$ and $\tilde \Delta = -\partial_u^2 + e^{2u} \Delta^{\RR^Q}$ be the corresponding Laplacian on $\tilde G = \RR^Q \rtimes \RR$. Homogeneity and finite propagation speed properties of $\Delta^N$ and $\Delta^{\RR^Q}$ yield the following result.

\begin{prop}\label{prp:n_r_calculi}
For all $a \geq 0$ there is $C \in \Rpos$ such that, for all bounded Borel functions $f : \RR \to \CC$,
\[
\int_N |z|_N^a |k_{f(\Delta^N)}(z)|^2 \di z \leq C \int_{\RR^Q} |z|_{\RR^Q}^a |k_{f(\Delta^{\RR^Q})}(z)|^2 \di z,
\]
with equality if $a = 0$.
\end{prop}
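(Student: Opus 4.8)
\emph{Proof proposal.} The plan is to exploit the three structural features shared by $\Delta^N$ and $\Delta^{\RR^Q}$: both are homogeneous of degree $2$ with respect to a one-parameter group of dilations whose homogeneous dimension is $Q$ (so that $|B_N(0_N,r)|=V_N r^Q$ and the Euclidean ball of radius $r$ has volume $V_{\RR^Q}r^Q$), and both satisfy finite propagation speed with respect to the corresponding distance; none of these data distinguishes the nonabelian case from the abelian one. First I would treat $a=0$. Homogeneity of $\Delta^N$ gives $h^N_s(z)=s^{-Q/2}h^N_1(\delta_{s^{-1/2}}z)$, hence $\|h^N_s\|_{L^2(N)}^2=s^{-Q/2}\|h^N_1\|_{L^2(N)}^2$; comparing with $\|h^N_s\|_{L^2(N)}^2=\int_{\Rnon}e^{-2s\lambda}\di\sigma_{\Delta^N}(\lambda)$ (see \eqref{eq:planchereldef}) and using injectivity of the Laplace transform forces $\di\sigma_{\Delta^N}(\lambda)=c_N\,\lambda^{Q/2-1}\di\lambda$ for some $c_N>0$, and the same reasoning on $\RR^Q$ gives $\di\sigma_{\Delta^{\RR^Q}}(\lambda)=c_{\RR^Q}\,\lambda^{Q/2-1}\di\lambda$. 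By \eqref{eq:planchereldef} this already yields the asserted inequality (for every $f$) with $C=c_N/c_{\RR^Q}$; the equality in the case $a=0$ is exactly the identity $c_N=c_{\RR^Q}$, i.e.\ $h^N_1(0_N)=h^{\RR^Q}_1(0_{\RR^Q})$, which is where the normalization of the Haar measure on $N$ enters (equivalently, the common value of this constant is read off the leading-order local behaviour of the two kernels, which finite propagation speed and the $r^Q$ volume growth pin down in terms of $Q$ alone).

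For $a>0$ I would first reduce to a normalized situation. Using the scaling identity $k_{g(t\Delta^N)}(z)=t^{-Q/2}k_{g(\Delta^N)}(\delta_{t^{-1/2}}z)$ and its verbatim analogue on $\RR^Q$, together with $|\delta_t z|_N=t|z|_N$ and $|tz|_{\RR^Q}=t|z|_{\RR^Q}$, one sees that replacing $f$ by $f(t\cdot)$ multiplies \emph{both} sides of the inequality by the same power of $t$; hence it suffices to prove the inequality for $f$ supported in a fixed dyadic interval, say $[1/4,4]$, and then to sum over a dyadic decomposition $f=\sum_j f\,\psi(2^j\cdot)$ by an almost-orthogonality argument (the pieces $k_{f\psi(2^j\cdot)(\Delta^N)}$ are spectrally localized, so their weighted $L^2$ contributions add up, just as for $a=0$ they are honestly orthogonal in $L^2(N)$). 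For $f$ supported in $[1/4,4]$ a direct computation with the Fourier transform on $\RR^Q$ gives $\int_{\RR^Q}|z|_{\RR^Q}^a|k_{f(\Delta^{\RR^Q})}(z)|^2\di z\sim_{a,Q}\|f\|_{H^{a/2}(\RR)}^2$ (differentiating the radial profile $\xi\mapsto f(|\xi|^2)$), so the whole matter reduces to the weighted Plancherel estimate $\int_N|z|_N^a|k_{f(\Delta^N)}(z)|^2\di z\lesssim_{a,Q}\|f\|_{H^{a/2}(\RR)}^2$, uniformly over such $f$. For $a\in2\NN$ I would derive this from the commutation relations $[\Delta^N,z_j]=-2\vfN_j$ for the first-layer coordinates $z_j$ (and the analogous, more involved relations for the higher-layer coordinates, obtained by iteration), which give the exact identity $z_j\,k_{g(\Delta^N)}=2\,\vfN_j\,k_{g'(\Delta^N)}$ and, after iteration and after writing $|z|_N^{a/2}$ (up to equivalent smooth homogeneous norms, using the ball--box inequality $|z|_N\sim\sum_k|z^{(k)}|^{1/k}$) as a finite sum of monomials of homogeneous degree $a/2$, an identity expressing $|z|_N^{a/2}k_{f(\Delta^N)}$ as a finite sum of terms $\vfN^{\beta}k_{\widetilde f(\Delta^N)}$ with $\widetilde f$ a derivative of $f$ of order $\le a/2$. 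The $L^2(N)$-norm of each such term is then controlled, via the $a=0$ Plancherel identity and the subelliptic estimate $\|\vfN^{\beta}k_{h(\Delta^N)}\|_2^2\lesssim\langle(I+\Delta^N)^{|\beta|}k_{h(\Delta^N)},k_{h(\Delta^N)}\rangle$, by $\int_{1/4}^{4}|\widetilde f(\lambda)|^2\di\lambda\lesssim\|f\|_{H^{a/2}}^2$; general $a>0$ follows by monotonicity in $a$ and interpolation.

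The main obstacle is this last weighted Plancherel estimate on the nonabelian group $N$ with the \emph{sharp} Sobolev order $a/2$: the naive route through finite propagation speed --- bounding $\int_{|z|_N>r}|k_{f(\Delta^N)}(z)|^2\di z$ by a negative power of $r$ times a Sobolev norm of $f$ --- loses derivatives, so one is forced to feed the exact form $c_N\lambda^{Q/2-1}\di\lambda$ of the Plancherel measure back into the argument and to handle the higher-layer directions of $N$, where the clean first-layer commutation identity is replaced by the iterated-commutator identities indicated above (these introduce auxiliary coordinate factors that have to be reabsorbed, as in the known theory of weighted Plancherel estimates on homogeneous groups). A secondary point is the constant-matching in the case $a=0$, which relies on the chosen normalization of the Haar measure on $N$.
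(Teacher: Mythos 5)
The paper's own ``proof'' of this proposition is a citation to \cite[formule (3) et Lemme 2]{sikora_multiplicateurs_1992}, so there is no detailed argument to match; your plan --- identify the Plancherel measure of $\Delta^N$ as $c_N\lambda^{Q/2-1}\di\lambda$ via homogeneity of the heat semigroup and injectivity of the Laplace transform, then trade the weight $|z|_N^a$ for derivatives of the multiplier --- is in the spirit of how such results are proved, and your $a=0$ \emph{inequality} is correct and complete. However, the $a>0$ part has a genuine gap at its core. The ``exact identity'' $z_j\,k_{g(\Delta^N)}=2\,\vfN_j\,k_{g'(\Delta^N)}$ is false on nonabelian $N$: the computation that yields it on $\RR^Q$ (say for $g(\lambda)=e^{-t\lambda}$, via $(\partial_t+\Delta)(x_jh_t+2t\partial_jh_t)=0$) uses not only $[\Delta,x_j]=-2\partial_j$ but also $[\Delta,\partial_j]=0$, and the latter fails whenever $N$ is nonabelian, since $[\Delta^N,\vfN_j]=-\sum_i(\vfN_i[\vfN_i,\vfN_j]+[\vfN_i,\vfN_j]\vfN_i)\neq0$. (On the Heisenberg group one sees directly from Gaveau's formula that $x\,h_t\neq-2t\,Xh_t$.) So error terms appear already for first-layer coordinates, they compound under the iteration needed to reach higher-layer coordinates, and controlling them with the \emph{sharp} count of $a/2$ derivatives is precisely the content of Sikora's lemma --- the part your sketch defers to ``the known theory of weighted Plancherel estimates''. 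That is the missing proof, not a routine reabsorption.

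Two further steps are asserted rather than proved. First, the dyadic summation requires almost-orthogonality in the \emph{weighted} $L^2$ norms on both sides; note in particular that on the Euclidean side you need the lower bound $\sum_j\int_{\RR^Q}|x|^a|k_{f_j(\Delta^{\RR^Q})}|^2\di x\lesssim\int_{\RR^Q}|x|^a|k_{f(\Delta^{\RR^Q})}|^2\di x$, and passing from even integers $a$ to general $a\geq0$ is not a monotonicity statement (the weights $|z|_N^a$ for different $a$ are not pointwise comparable) but a Stein--Weiss interpolation with change of measure that has to be set up. Second, the claimed equality of constants when $a=0$, i.e.\ $h^N_1(0_N)=h^{\RR^Q}_1(0)$, cannot be ``read off'' from finite propagation speed and the $r^Q$ volume growth: $h^N_1(0_N)$ is not determined by $Q$ alone (on the Heisenberg group it is a nontrivial integral involving $\tau/\sinh\tau$) and it rescales if the Haar measure is rescaled. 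This last point is harmless for the paper, which only ever uses the two-sided comparison (cf.\ Corollary \ref{cor:plancherel}), but the justification you offer for the exact equality is not valid.
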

\begin{proof}
See \cite[formula (3) and Lemme 2]{sikora_multiplicateurs_1992}.
\end{proof}

\begin{coro}\label{cor:n_hyp_calculi}
For all $a \geq 0$ there is $C \in \Rpos$ such that, for all bounded Borel functions $F : \RR \to \CC$,
\begin{equation}\label{eq:weightedinequality}
\int_G |z|_N^a |k_{F(\Delta)}(z,u)|^2 \di\mu(z,u) \leq C \int_{\tilde G} |z|_{\RR^Q}^a |k_{F(\tilde \Delta )}(z,u)|^2 \di\tilde \mu(z,u),
\end{equation}
with equality if $a = 0$, where $\di\tilde \mu(z,u)=\di z\di u$ is the right Haar measure on $\tilde G$.
\end{coro}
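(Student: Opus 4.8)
The plan is to reduce the weighted $L^2$-estimate on $G$ to the corresponding estimate on $N$ fiber by fiber, by freezing the $A$-coordinate $u$, and then invoke Proposition~\ref{prp:n_r_calculi} together with a density argument. First I would observe that, by \eqref{eq:planchereldef} applied to the pair $(\Delta,\Delta^N)$ or more directly by Fubini, the left-hand side of \eqref{eq:weightedinequality} splits as
\[
\int_G |z|_N^a |k_{F(\Delta)}(z,u)|^2 \di\mu(z,u) = \int_\RR \left( \int_N |z|_N^a |k_{F(\Delta)}(z,u)|^2 \di z \right) \di u,
\]
and similarly for $\tilde G$. So it suffices to prove, for each fixed $u \in \RR$, the inequality
\[
\int_N |z|_N^a |k_{F(\Delta)}(z,u)|^2 \di z \leq C \int_{\RR^Q} |z|_{\RR^Q}^a |k_{F(\tilde\Delta)}(z,u)|^2 \di z,
\]
with $C$ independent of $u$ (and equality when $a = 0$).

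Next I would bring in Proposition~\ref{prp:g_n_calculi}: for $F \in \mathcal{J}$ there is a bounded Borel function $M_{F,u}$, \emph{independent of the choice of stratified group}, such that $k_{F(\Delta)}(\cdot,u) = k_{M_{F,u}(\Delta^N)}$ on $N$ and likewise $k_{F(\tilde\Delta)}(\cdot,u) = k_{M_{F,u}(\Delta^{\RR^Q})}$ on $\RR^Q$ (applying the proposition with $N = \RR^Q$, $\Delta^N = \Delta^{\RR^Q}$, since $\tilde\Delta$ is exactly the operator one gets from the construction when $N$ is abelian). Then the fiberwise inequality is precisely Proposition~\ref{prp:n_r_calculi} applied to the single multiplier $f = M_{F,u}$: the constant $C$ there depends only on $a$, not on $f$ or $u$, which is what we need. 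Integrating in $u$ gives \eqref{eq:weightedinequality} for all $F \in \mathcal{J}$, with equality when $a = 0$.

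Finally I would pass from $\mathcal{J}$ to arbitrary bounded Borel $F$ by approximation. For this the clean route is: both sides of \eqref{eq:weightedinequality}, viewed as functions of $F$, depend only on $|F|^2$ through a measure-like object. Concretely, for fixed $u$, $F \mapsto \int_N |z|_N^a |k_{M_{F,u}(\Delta^N)}|^2\di z$ is controlled by a weighted Plancherel-type measure for $\Delta^N$ composed with the map $F \mapsto M_{F,u}$; since $\mathcal{J}$ is uniformly dense in $C_0(\Rnon)$ and the relevant functionals are continuous under bounded-pointwise (or uniform-on-compacta) limits with a uniform bound — using dominated convergence against the weighted Plancherel measures on $N$ and $\RR^Q$, which are $\sigma$-finite — the inequality extends first to $C_0$ and then, by a further monotone/dominated-convergence step, to all bounded Borel $F$. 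The main obstacle is making this last density step rigorous: one must ensure that the weighted $L^2$-norms $\int_N |z|_N^a|k_{f(\Delta^N)}|^2\di z$ are genuinely given by integration of $|f|^2$ against a fixed positive measure on $\Rnon$ (a ``weighted Plancherel measure''), so that bounded pointwise convergence of $M_{F_n,u} \to M_{F,u}$ — which itself follows from dominated convergence in the integral defining $M_{F,u}$ — transfers to convergence of both sides of \eqref{eq:weightedinequality}; the case $a=0$ is the ordinary Plancherel measure, and for $a>0$ one needs the analogous statement for the operator ``multiplication by $|z|_N^a$ composed with the functional calculus'', which can be obtained from the $a=0$ case by a standard argument (e.g.\ polarization and the fact that $|z|_N^{a/2} k_{f(\Delta^N)} = k_{(\text{suitable operator})}$ in an $L^2$ sense, or directly from the structure of \cite{sikora_multiplicateurs_1992}).
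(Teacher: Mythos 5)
Your argument is correct and coincides with the paper's: the paper's entire proof consists of observing that for $F \in \mathcal{J}$ the inequality (and equality for $a=0$) follows by combining Propositions \ref{prp:g_n_calculi} and \ref{prp:n_r_calculi} — exactly your fiberwise reduction, hinging on the fact that $M_{F,u}$ is independent of the choice of stratified group — and then asserting that the general case follows by density. Your discussion of the density step, which the paper leaves entirely unelaborated, correctly pinpoints the only delicate point of the argument.
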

\begin{proof}
In the case $F \in \mathcal{J}$ the above inequality (or equality if $a=0$) follows immediately by combining Propositions \ref{prp:g_n_calculi} and \ref{prp:n_r_calculi}. The general case is then given by density.
\end{proof}

By comparing the case $a=0$ of Corollary \ref{cor:n_hyp_calculi} with the characterization \eqref{eq:planchereldef} of the Plancherel measure we obtain immediately the following result.

\begin{coro}\label{cor:plancherel}
For an arbitrary stratified group $N$ of homogeneous dimension $Q$, the Plancherel measure $\sigma_\Delta$ is given by \eqref{eq:plancherel} for some constant $c_\Delta \in \Rpos$. In particular the $L^2$-spectrum of $\Delta$ is $\Rnon$ and, for all Borel functions $F : \RR \to \CC$,
\[
\|k_{F(\sqrt{\Delta})}\|_2 \sim \left(\int_0^\infty |F(\lambda)|^2 (\lambda^3 + \lambda^{Q+1}) \,\frac{\di \lambda}{\lambda}\right)^{1/2}.
\]
\end{coro}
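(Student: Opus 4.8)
The plan is to combine the $a=0$ case of Corollary \ref{cor:n_hyp_calculi} with the abstract characterization \eqref{eq:planchereldef} of the Plancherel measure. First I would observe that, by \eqref{eq:planchereldef} applied to the sub-Laplacian $\Delta$ on $G$ and (by the same abstract nonsense) to the Laplacian $\tilde\Delta$ on $\tilde G = \RR^Q \rtimes \RR$, one has
\[
\int_{\Rnon} |F(\lambda)|^2 \di\sigma_\Delta(\lambda) = \|k_{F(\Delta)}\|_{L^2(G)}^2, \qquad \int_{\Rnon} |F(\lambda)|^2 \di\sigma_{\tilde\Delta}(\lambda) = \|k_{F(\tilde\Delta)}\|_{L^2(\tilde G)}^2,
\]
for all bounded Borel $F : \Rnon \to \CC$. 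The equality statement in Corollary \ref{cor:n_hyp_calculi} (the case $a=0$) says precisely that the left-hand sides agree, hence $\sigma_\Delta = \sigma_{\tilde\Delta}$ as Borel measures on $\Rnon$ (two positive Borel measures whose integrals against all $|F|^2$ with $F$ bounded Borel coincide must be equal; it suffices to test against characteristic functions of Borel sets). Since $\tilde G$ is the $(Q+1)$-dimensional real hyperbolic space and $\tilde\Delta$ is its full Laplacian — i.e.\ the case ``$N$ abelian'' of the construction in this paper — the explicit spherical-analysis computation recalled in \eqref{eq:plancherel} gives $\sigma_{\tilde\Delta}(F) = c_\Delta \int_\RR F(s^2) |\hc_Q(s)|^{-2} \di s$ for some $c_\Delta \in \Rpos$. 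Transferring this to $\sigma_\Delta$ yields \eqref{eq:plancherel} for arbitrary stratified $N$ of homogeneous dimension $Q$.

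Next I would read off the two remaining assertions. Since $|\hc_Q(s)|^{-2} > 0$ for all $s \in \RR$, the measure $\sigma_\Delta$ has support equal to $\Rnon$, so the $L^2$-spectrum of $\Delta$ (which by \eqref{eq:planchereldef} is the support of $\sigma_\Delta$) is $\Rnon$. For the norm equivalence, I would apply \eqref{eq:plancherel} with $F$ replaced by $F(\sqrt{\cdot})$ and use the change of variable $\lambda = s^2$:
\[
\|k_{F(\sqrt\Delta)}\|_2^2 = \int_\Rnon |F(\sqrt\lambda)|^2 \di\sigma_\Delta(\lambda) = c_\Delta \int_\RR |F(|s|)|^2 |\hc_Q(s)|^{-2} \di s = 2 c_\Delta \int_0^\infty |F(s)|^2 |\hc_Q(s)|^{-2} \di s,
\]
and then invoke the stated two-sided bound $|\hc_Q(s)|^{-2} \sim s^2$ for $s$ small and $|\hc_Q(s)|^{-2} \sim s^Q$ for $s$ large, which gives $|\hc_Q(s)|^{-2} \sim s^2 + s^Q = s(s^3 + s^{Q+1})/s^2$... more simply, $|\hc_Q(s)|^{-2} \sim \min(s^2, s^Q) + \max(s^2,s^Q)$, i.e.\ $|\hc_Q(s)|^{-2} \sim s^2 + s^Q$; multiplying by the extra factor from writing the integral in the form $\int_0^\infty |F(s)|^2 (s^3 + s^{Q+1}) \, \di s/s$ and noting $(s^3+s^{Q+1})/s = s^2 + s^Q$, one obtains the claimed equivalence.

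I do not expect a genuine obstacle here: this corollary is a formal consequence of results already established in the excerpt (Corollary \ref{cor:n_hyp_calculi} with $a=0$, the definition \eqref{eq:planchereldef}, and the hyperbolic-space formula \eqref{eq:plancherel}). The only point requiring a word of care is the passage from ``$\int |F|^2 \di\sigma_\Delta = \int |F|^2 \di\sigma_{\tilde\Delta}$ for all bounded Borel $F$'' to ``$\sigma_\Delta = \sigma_{\tilde\Delta}$''; this is immediate by testing against $F = \chi_E$ for Borel $E \subset \Rnon$ (finite measure on compact pieces, since both measures are locally finite, being Plancherel measures of operators with smooth $L^1$ heat kernels). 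Everything else is bookkeeping with the explicit weight $|\hc_Q|^{-2}$.
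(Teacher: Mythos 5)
Your proposal is correct and follows exactly the route the paper takes: the paper derives this corollary "immediately" by comparing the $a=0$ equality case of Corollary \ref{cor:n_hyp_calculi} with the characterization \eqref{eq:planchereldef}, thereby identifying $\sigma_\Delta$ with the explicitly known Plancherel measure of the hyperbolic-space Laplacian $\tilde\Delta$, and then reading off the spectrum and the weighted $L^2$ equivalence from the asymptotics of $|\hc_Q|^{-2}$. Your additional remarks (testing against characteristic functions to identify the measures, the change of variable $\lambda=s^2$, and $|\hc_Q(s)|^{-2}\sim s^2+s^Q$ since $Q\geq 2$) are exactly the bookkeeping the paper leaves implicit.
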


\section{The multiplier theorem}\label{s:theorem}

In this section we prove Theorem \ref{thm:moltiplicatori}. To do so, we need some preliminary estimates of the $L^1$-norm of the convolution kernels of spectral multipliers of $\Delta$.

\begin{prop}\label{prp:l1l2estimate}
There exists a positive constant $C$ such that, for all $r > 0$ and all even bounded Borel functions $F : \RR \to \CC$ whose Fourier transform $\hat F$ is supported in $[-r,r]$,
\[
\|k_{F(\sqrt{\Delta})}\|_{1} \leq C \min\{r^{(Q+1)/2}, r^{3/2}\} \|k_{F(\sqrt{\Delta})}\|_{2}.
\]
\end{prop}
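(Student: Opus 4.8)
The plan is to exploit finite propagation speed together with the weighted $L^2$-estimates of Corollary \ref{cor:n_hyp_calculi} and the volume growth information from Proposition \ref{prp:radialdensity}. Since $F$ is even and $\hat F$ is supported in $[-r,r]$, the spectral theorem and the wave equation representation $F(\sqrt\Delta) = \frac{1}{2\pi}\int_{\RR} \hat F(t) \cos(t\sqrt\Delta)\di t$ show, via finite propagation speed, that $\supp k_{F(\sqrt\Delta)} \subset B_\dist(0_G,r)$. Hence by Cauchy--Schwarz,
\[
\|k_{F(\sqrt\Delta)}\|_1 \leq \left(\int_{B_\dist(0,r)} (1+w)^{-1}\di\mu\right)^{1/2} \left(\int_G (1+w)\,|k_{F(\sqrt\Delta)}|^2 \di\mu\right)^{1/2},
\]
where $w(z,u) = |z|_N^Q$ is the weight introduced before \eqref{eq:weightedradialcomparison}. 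The first factor is controlled by \eqref{eq:inverseweight}: it is $\lesssim r^{(Q+1)/2}$ for $r\leq 1$ and $\lesssim r$ for $r\geq 1$. For the second factor, the term with the constant $1$ is just $\|k_{F(\sqrt\Delta)}\|_2$, while the term with $w$ is estimated by Corollary \ref{cor:n_hyp_calculi} (with $a=Q$):
\[
\int_G w\,|k_{F(\sqrt\Delta)}|^2\di\mu \lesssim \int_{\tilde G} |z|_{\RR^Q}^Q\,|k_{F(\tilde\Delta)}(z,u)|^2\di\tilde\mu(z,u).
\]

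The main point is then to bound this weighted $L^2$-norm on the real hyperbolic space $\tilde G$ by $r^2$ (respectively $r^{Q-1}$) times $\|k_{F(\sqrt\Delta)}\|_2^2 = \|k_{F(\tilde\Delta)}\|_2^2$ (the last equality being the $a=0$ case of Corollary \ref{cor:n_hyp_calculi}, equivalently Corollary \ref{cor:plancherel}). Here spherical analysis on $\tilde G$ is available: $m^{-1/2}k_{F(\tilde\Delta)}$ is radial, and $|z|_{\RR^Q}^Q$ combined with the modular weight behaves, after passing to polar-type coordinates as in the proof of Proposition \ref{prp:radialdensity}, like integrating against $\di\sigma_\Delta$ with an extra factor coming from $w$. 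Concretely, one wants an inequality of the form
\[
\int_{\tilde G} |z|_{\RR^Q}^Q\,|k_{F(\tilde\Delta)}(z,u)|^2\di\tilde\mu \lesssim \min\{r^2, r^{Q-1}\}\int_\Rnon |F(\sqrt\lambda)|^2\di\sigma_\Delta(\lambda),
\]
valid for $\hat F$ supported in $[-r,r]$. This should follow by writing $|z|_{\RR^Q}^Q \lesssim |(z,u)|_{\tilde\dist}^Q$ on the support ball (using that $|z|_{\RR^Q}\lesssim e^u |(z,u)|_{\tilde\dist}$ or the analogue of \eqref{eq:distanza}, and that $u$ is bounded by $r$), combined with the fact that multiplication by $|(z,u)|_{\tilde\dist}$ on kernels corresponds, via the Harish-Chandra/spherical transform, to a derivative in the spectral variable; together with Bernstein-type inequalities for $F$ (whose Fourier support in $[-r,r]$ controls derivatives) this produces the factors $r$ or $r^{(Q-1)/2}$. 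Summing: for $r\leq 1$ one gets $r^{(Q+1)/2}\cdot\max\{1,r\}\|k\|_2 = r^{(Q+1)/2}\|k\|_2$ since $r^{(Q+1)/2}\cdot r^{(Q-1)/2}=r^Q \leq r^{(Q+1)/2}$ fails — so one must be slightly careful and rather use that for $r\leq 1$ the bound $r^{(Q+1)/2}$ already comes from the volume factor alone with the unweighted term dominating; for $r\geq 1$ one gets $r^{1/2}\cdot r^{(Q-1)/2} = r^{Q/2}$, which is too large, so instead for large $r$ one matches $r^{1/2}$ (volume) with the unweighted $L^2$-norm directly, giving $r^{1/2}\|k\|_2$ — again not $r^{3/2}$. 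This tension means the naive split is not quite enough and the hyperbolic estimate must genuinely be used to gain the correct power; I expect the actual argument interpolates between the $w$-weighted and unweighted estimates with a carefully chosen exponent $a\in[0,Q]$ rather than using $a=Q$.

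\textbf{Main obstacle.} The delicate part is the weighted $L^2$-estimate on $\tilde G$: proving that, for $\hat F$ supported in $[-r,r]$,
\[
\int_{\tilde G} |z|_{\RR^Q}^{a}\,|k_{F(\tilde\Delta)}(z,u)|^2\di\tilde\mu \lesssim r^{a}\cdot\big(\text{localization factor}\big)\cdot\|k_{F(\tilde\Delta)}\|_2^2
\]
with the right power, using spherical analysis (the explicit Plancherel density $|\hc_Q|^{-2}$ and the expression for $k_{F(\tilde\Delta)}$ via the spherical transform), and then optimizing over $a$ — or equivalently, running a single clean argument that interpolates the endpoints $a=0$ and a value of $a$ large enough to beat the volume factor. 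Balancing the volume growth \eqref{eq:inverseweight} against the gain from the moment weight, one checks that the correct choice yields exactly $\min\{r^{(Q+1)/2},r^{3/2}\}$, with the small-time exponent coming from dimension $Q+1$ and the large-time exponent $3/2$ coming from the fact that \eqref{eq:inverseweight} grows only like $r^2$ for large $r$ (the ``effective dimension at infinity'' being $3$). Making the moment-weight/Fourier-support bookkeeping precise, and verifying it survives the transfer from $\tilde G$ back to $G$ via Corollary \ref{cor:n_hyp_calculi}, is where the real work lies.
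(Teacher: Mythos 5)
Your overall architecture---finite propagation speed to localize $\supp k_{F(\sqrt\Delta)}$ in $\overline{B_\dist(0,r)}$, H\"older against the weights $(1+w)^{\pm1}$ using \eqref{eq:inverseweight}, and the transfer to the hyperbolic space $\tilde G$ via Corollary \ref{cor:n_hyp_calculi} with $a=Q$---is exactly the paper's. (For $r\le 1$ the paper simply uses the volume bound \eqref{eq:volumeasymptotics} and plain H\"older, with no weight at all; your small-$r$ discussion resolves itself the same way.) The genuine gap is precisely the step you flag as ``where the real work lies'': the weighted $L^2$-bound on $\tilde G$. The mechanism is not a Bernstein inequality or a derivative in the spectral variable, and no interpolation over $a\in[0,Q]$ is needed; your candidate bound $\min\{r^2,r^{Q-1}\}$ has the wrong shape---the correct gain from the weight is a single power of $r$, uniformly in $Q$. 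What closes the argument is spherical analysis used in a purely geometric way: $k_{F(\sqrt{\tilde\Delta})}=\tilde m^{1/2}\phi_F$ with $\phi_F$ radial and supported in $\overline{B_{\tilde\dist}(0,r)}$, together with inequality \eqref{eq:weightedradialcomparison}, whose content is that $\int_{-\rho}^{\rho}(\cosh \rho-\cosh u)^{Q-1}\di u\lesssim \rho\sinh^{Q-1}\rho$, i.e., for radial integrands the weight $\tilde m\,\tilde w$ is dominated by the weight $|x|_{\tilde\dist}$. Hence
\[
\bigl\|\tilde w^{1/2}k_{F(\sqrt{\tilde\Delta})}\bigr\|_{L^2(\tilde G)}^2=\int_{\tilde G}\tilde m\,|\phi_F|^2\,\tilde w\di\tilde\mu\lesssim\int_{\tilde G}|\phi_F|^2\,|x|_{\tilde\dist}\di\tilde\mu\le r\,\|\phi_F\|_{L^2(\tilde G)}^2,
\]
and by radiality (formula \eqref{eq:radialdensity}) $\|\phi_F\|_{L^2(\tilde G)}=\|\tilde m^{1/2}\phi_F\|_{L^2(\tilde G)}=\|k_{F(\sqrt{\tilde\Delta})}\|_{L^2(\tilde G)}$, which equals $\|k_{F(\sqrt\Delta)}\|_{2}$ by the $a=0$ case of Corollary \ref{cor:n_hyp_calculi}. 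This gives $\|w^{1/2}k_{F(\sqrt\Delta)}\|_{2}\lesssim r^{1/2}\|k_{F(\sqrt\Delta)}\|_{2}$, which combined with the factor $r$ you already extracted from \eqref{eq:inverseweight} lands exactly on $r^{3/2}$ for $r\ge 1$. Without this identification the proof does not close: as you yourself compute, the powers produced by your Bernstein/spectral-derivative route do not match the claimed exponents.
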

\begin{proof}
Note that, since $\Delta$ satisfies finite propagation speed, $\supp k_{F(\sqrt{\Delta})} \subset \overline{B_\dist(0,r)}$ (see, e.g., \cite[Lemma 1.2]{cowling_spectral_2001}). Then, if $r \leq 1$, by H\"older's inequality and \eqref{eq:volumeasymptotics},
\[
\|k_{F(\sqrt{\Delta})}\|_{1} \lesssim r^{(Q+1)/2} \| k_{F(\sqrt{\Delta})} \|_{2}
\]
and we are done.

If instead $r \geq 1$, then, by H\"older's inequality and \eqref{eq:inverseweight},
\begin{equation}\label{eq:weighted_hoelder}
\|k_{F(\sqrt{\Delta})}\|_{1} \lesssim r \left( \| k_{F(\sqrt{\Delta})} \|_{2} + \| k_{F(\sqrt{\Delta})} \, w^{1/2}\|_{2} \right),
\end{equation}
where the weight $w$ is given by $w(z,u) = |z|_N^Q$. Therefore, by applying Corollary~\ref{cor:n_hyp_calculi} with $a=Q$, we have that
\begin{equation}\label{eq:relation_n_hyp}
\| k_{F(\sqrt{\Delta})} \, w^{1/2}\|_{2} \lesssim \| k_{F(\sqrt{\tilde\Delta})} \, \tilde w^{1/2}\|_{L^2(\tilde G)}
\end{equation}
where $\tilde w$ is the analogous weight on $\tilde G = \RR^Q \rtimes \RR$.
By spherical analysis on real hyperbolic spaces,
if $\tilde m$ is the modular function on $\tilde G$,
 then $k_{F(\sqrt{\tilde\Delta})} = \tilde m^{1/2} \phi_F$ for some radial function $\phi_F$ on $\tilde G$ (see, e.g., \cite[Proposition 1.2]{cowling_spectral_1994} and \cite[p.\ 148]{A1}). Moreover, if $\tilde\dist$ denotes the left-invariant Riemannian distance on $\tilde G$, then $\supp \phi_F = \supp k_{F(\sqrt{\tilde\Delta})} \subset \overline{B_{\tilde\dist}(0,r)}$, because $\tilde\Delta$ satisfies finite propagation speed too.
We can then apply \eqref{eq:weightedradialcomparison} and \eqref{eq:radialdensity} to obtain that
\begin{equation}\label{eq:steps}
\begin{split}
\| k_{F(\sqrt{\tilde\Delta})} \, \tilde w^{1/2}\|_{L^2(\tilde G)} &= \|  \tilde w^{1/2}\,   \tilde m^{1/2}  \phi_F  \|_{L^{2}(\tilde G)} \\
&\lesssim \|\phi_F \, \tilde\dist(\cdot,0_{\tilde G})^{1/2} \|_{L^{ 2}(\tilde G)} \\
&\lesssim r^{1/2} \|\phi_F \|_{L^{2}(\tilde G)} \\
&=r^{1/2} \|\phi_F \,\tilde m^{1/2} \|_{L^{2}(\tilde G)} \\
&=        r^{1/2} \| k_{F(\sqrt{\tilde\Delta})} \|_{L^2(\tilde G)} \\&\sim r^{1/2} \| k_{F(\sqrt{\Delta})} \|_{2},
\end{split}
\end{equation}
where the last step is given by Corollary \ref{cor:n_hyp_calculi} in the case $a=0$. The conclusion follows by combining \eqref{eq:relation_n_hyp} and \eqref{eq:steps} and plugging the resulting inequality into \eqref{eq:weighted_hoelder}.
\end{proof}

The next lemma shows that every function $f$ supported in $[1/2,2]$ may be written as sum of functions whose Fourier transforms have compact support.

\begin{lem}\label{lem:decomp}
Let $f \in L^2(\RR)$ be even and supported in $[-2,2]$. Then there exist even functions $f_{\ell}$, $\ell \in \NN$, such that
\begin{enumerate}[label=(\roman*)]
\item $f=\sum_{\ell=0}^{\infty} f_{\ell}$;
\item $\supp \hat{f}_{\ell} \subset [-2^{\ell},2^{\ell}]$;
\item for all $\alpha,\beta,s \in \Rnon$,
\[
\int_0^{\infty}|f_{\ell}(\lambda)|^2\,(\lambda^{\alpha}+\lambda^{\beta})\di\lambda\leq C_{\alpha,\beta,s}\, 2^{-2s\ell}\,\|f\|^2_{H^s(\RR)}.
\]
\end{enumerate}
Let $f_t$ denote the dilated of $f$ defined by $f_t=f(t\cdot)$. Then
\begin{enumerate}[label=(\roman*')]
\item $f_t=\sum_{\ell} f_{\ell,t}$, where $f_{\ell,t}=f_{\ell}(t\cdot)\,;$
\item $\supp \hat{f}_{\ell,t} \subset [-2^{\ell}t,2^{\ell}t]\,;$
\item for all $\alpha,\beta,s \in \Rnon$,
\[
\int_0^{\infty}|f_{\ell,t}(\lambda)|^2\,(\lambda^{\alpha}+\lambda^{\beta})\di\lambda\leq C_{\alpha,\beta,s} \max \{t^{-(\alpha+1)},t^{-(\beta+1)} \}
\,2^{-2s\ell}\,\|f\|^2_{H^s(\RR)}.
\]
\end{enumerate}
\end{lem}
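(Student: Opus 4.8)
The plan is a standard Littlewood--Paley decomposition of $f$ on the Fourier side. Fix an even function $\eta \in C^\infty_c(\RR)$ with $\eta \equiv 1$ on $[-1/2,1/2]$ and $\supp \eta \subset [-1,1]$, set $\psi_0 = \eta$ and $\psi_\ell(\xi) = \eta(2^{-\ell}\xi) - \eta(2^{-\ell+1}\xi)$ for $\ell \geq 1$, and define $f_\ell$ by $\hat f_\ell = \psi_\ell\,\hat f$. Then each $\psi_\ell$ is even, $\supp \psi_\ell \subset [-2^\ell,2^\ell]$, and $\supp \psi_\ell \subset \{|\xi| \geq 2^{\ell-2}\}$ for $\ell \geq 1$, while $\sum_{\ell=0}^L \psi_\ell = \eta(2^{-L}\xi) \to 1$ pointwise. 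Since $f$ has compact support and lies in $L^2(\RR)$, $\hat f$ is the restriction of an entire function, so each $f_\ell$ is a Schwartz function; it is even because $f$ and $\psi_\ell$ are; and $\supp \hat f_\ell \subset [-2^\ell,2^\ell]$. By dominated convergence $\eta(2^{-L}\,\cdot\,)\hat f \to \hat f$ in $L^2$, hence $\sum_{\ell=0}^L f_\ell \to f$ in $L^2$. This gives (i) and (ii).

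For (iii), using the evenness of $f_\ell$ and the elementary bound $\lambda^\alpha + \lambda^\beta \leq 2(1+\lambda)^{2\gamma}$ with $\gamma = \max\{\alpha,\beta\}/2$, it suffices to estimate $\int_\RR (1+|\lambda|)^{2\gamma}|f_\ell(\lambda)|^2\di\lambda$, which I split over $|\lambda| \leq 4$ and $|\lambda| > 4$. On $|\lambda| \leq 4$ the weight is bounded, so the contribution is $\lesssim_\gamma \|f_\ell\|_2^2 = \|\psi_\ell\,\hat f\|_2^2$; since $(1+|\xi|)^{2s} \gtrsim_s 2^{2s\ell}$ on $\supp\psi_\ell$ (trivially when $\ell=0$), this is $\lesssim_s 2^{-2s\ell}\|f\|_{H^s}^2$. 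For the tail, note that $\psi_\ell(\xi) = \psi_1(2^{1-\ell}\xi)$ for $\ell \geq 1$, whence $|\check\psi_\ell(t)| \lesssim_N 2^\ell(1+2^{\ell-1}|t|)^{-N}$ for every $N$ (the same bound, with $2^\ell = 1$, holds for $\ell=0$ since $\check\psi_0 = \check\eta$ is Schwartz); combining this with $\supp f \subset [-2,2]$, the Cauchy--Schwarz bound $\|f\|_{L^1(\RR)} \lesssim \|f\|_2$, and $|\lambda-\mu| \geq |\lambda|/2$ when $|\lambda| > 4$ and $|\mu| \leq 2$, one gets
\[
|f_\ell(\lambda)| = \Bigl| \int_{-2}^2 f(\mu)\,\check\psi_\ell(\lambda-\mu)\di\mu \Bigr| \lesssim_N 2^{\ell(1-N)}\,|\lambda|^{-N}\,\|f\|_2 \qquad (|\lambda| > 4).
\]
Choosing $N$ large enough, depending on $s,\alpha,\beta$, so that $2N > 2\gamma+1$ and $N-1 \geq s$, the integral $\int_{|\lambda|>4}(1+|\lambda|)^{2\gamma}|\lambda|^{-2N}\di\lambda$ converges while $2^{2\ell(1-N)} \leq 2^{-2s\ell}$, so (using $\|f\|_2 \leq \|f\|_{H^s}$) the tail is also $\lesssim 2^{-2s\ell}\|f\|_{H^s}^2$. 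This proves (iii).

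Finally, (i$'$) and (ii$'$) follow at once from (i) and (ii) together with $\hat f_{\ell,t}(\xi) = t^{-1}\hat f_\ell(\xi/t)$, and (iii$'$) follows from (iii) by the change of variable $\lambda \mapsto t\lambda$, under which $\int_0^\infty |f_{\ell,t}(\lambda)|^2(\lambda^\alpha+\lambda^\beta)\di\lambda$ becomes $\int_0^\infty |f_\ell(\mu)|^2\bigl(t^{-\alpha-1}\mu^\alpha + t^{-\beta-1}\mu^\beta\bigr)\di\mu$, which is at most $\max\{t^{-(\alpha+1)},t^{-(\beta+1)}\}\int_0^\infty |f_\ell(\mu)|^2(\mu^\alpha+\mu^\beta)\di\mu$, to which (iii) applies. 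The one genuinely delicate point is the tail estimate for $|\lambda| > 4$: there $f_\ell$ is not supported near the origin (only $\hat f_\ell$ is compactly supported), so one must exploit the Schwartz decay of $\check\psi_\ell$ — uniform in $\ell$ after rescaling — together with the compact support of $f$ to absorb the polynomial weight $(1+|\lambda|)^{2\gamma}$ and still extract the decay factor $2^{-2s\ell}$; everything else is routine Littlewood--Paley bookkeeping.
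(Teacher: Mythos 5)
Your proof is correct. Note that the paper does not actually prove this lemma: its ``proof'' is a bare citation to \cite[Lemma (1.3)]{HEB}, so your self-contained argument is genuinely supplying what the paper omits. The construction you use --- a smooth dyadic partition of unity $\{\psi_\ell\}$ on the Fourier side with $\sum_{\ell\le L}\psi_\ell=\eta(2^{-L}\cdot)$, the frequency localization $|\xi|\gtrsim 2^\ell$ on $\supp\psi_\ell$ to extract the factor $2^{-2s\ell}$ near the origin, and the rescaled Schwartz decay of $\check\psi_\ell$ combined with $\supp f\subset[-2,2]$ to control the polynomially weighted tail $|\lambda|>4$ --- is the standard route and is essentially the argument behind the cited lemma of Hebisch. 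All the quantitative steps check out: the telescoping identity, the support bound $\supp\psi_\ell\subset\{2^{\ell-2}\le|\xi|\le 2^\ell\}$, the bound $|\check\psi_\ell(t)|\lesssim_N 2^\ell(1+2^{\ell-1}|t|)^{-N}$, the choice $N\ge s+1$ with $2N>2\gamma+1$, and the change of variables giving the factor $\max\{t^{-(\alpha+1)},t^{-(\beta+1)}\}$ in (iii$'$). You correctly identify the tail estimate as the one nontrivial point and handle the $\ell=0$ case separately where the rescaling identity $\psi_\ell=\psi_1(2^{1-\ell}\cdot)$ does not apply.
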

\begin{proof}
See \cite[Lemma (1.3)]{HEB}.
\end{proof}

\begin{prop}\label{prp:stimaA}
Let $F \in L^2(\RR)$ be supported in $[-4,4]$. Then the estimate
\begin{equation}\label{eq:stimaA}
\sup_{y \in G} \int_G|K_{F(t\Delta)}(x,y)|\,\big(1+t^{-1/2} \dist(x,y)\big)^{\varepsilon}\, \di\mu(x) \leq  C_{s,\varepsilon} \|F\|_{H^s}
\end{equation}
holds for all $\varepsilon \in \Rnon$ and $s,t \in \Rpos$ satisfying one of the following conditions:
\begin{itemize}
\item $t \geq 1$ and $s > 3/2+\varepsilon$;
\item $t \leq 1$ and $s > (Q+1)/2+\varepsilon$.
\end{itemize}
\end{prop}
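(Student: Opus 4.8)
The plan is first to remove the dependence on $y$ on the left-hand side of \eqref{eq:stimaA}, and then to decompose $F(t\Delta)$ dyadically so that each piece has a compactly supported convolution kernel, to which Proposition~\ref{prp:l1l2estimate} and the Plancherel formula of Corollary~\ref{cor:plancherel} can be applied. For the first step, by the expression \eqref{eq:integralkernel} for the integral kernel, the left-invariance of $\dist$, and the identity $\di\mu(yw)=m(y)^{-1}\di\mu(w)$ (which follows at once from the explicit formulas for $\mu$ and $m$ in Section~\ref{subs:NA}), one gets
\[
\int_G|K_{F(t\Delta)}(x,y)|\,\big(1+t^{-1/2}\dist(x,y)\big)^{\varepsilon}\di\mu(x)
= \int_G |k_{F(t\Delta)}(w)|\,\big(1+t^{-1/2}|w|_\dist\big)^{\varepsilon}\di\mu(w)
\]
for every $y\in G$; so it suffices to bound the right-hand side by $C_{s,\varepsilon}\|F\|_{H^s}$.

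For the decomposition I would pass to the square root. Put $\Phi(\lambda)=F(\lambda^2)$: then $\Phi$ is even, supported in $[-2,2]$, satisfies $\|\Phi\|_{H^s(\RR)}\lesssim\|F\|_{H^s(\RR)}$ (an elementary fact about composition with $\lambda\mapsto\lambda^2$ on a fixed compact set, using $s>1/2$), and $k_{F(t\Delta)}=k_{\Phi_{\sqrt t}(\sqrt\Delta)}$ with $\Phi_{\sqrt t}(\lambda)=\Phi(\sqrt t\,\lambda)$. Applying the dilated version of Lemma~\ref{lem:decomp} (with dilation parameter $\sqrt t$) we write $\Phi_{\sqrt t}=\sum_{\ell\ge 0}\Phi_{\ell,\sqrt t}$, where each $\Phi_{\ell,\sqrt t}=\Phi_\ell(\sqrt t\,\cdot)$ is even, $\supp\hat\Phi_{\ell,\sqrt t}\subset[-2^\ell\sqrt t,2^\ell\sqrt t]$, and, by part (iii$'$) of the lemma with $(\alpha,\beta)=(2,Q)$,
\[
\int_0^\infty|\Phi_{\ell,\sqrt t}(\lambda)|^2\,(\lambda^2+\lambda^Q)\di\lambda
\le C_s\,\max\{t^{-3/2},t^{-(Q+1)/2}\}\,2^{-2s\ell}\,\|F\|_{H^s}^2 .
\]
Correspondingly $k_{F(t\Delta)}=\sum_{\ell\ge 0}k_{\Phi_{\ell,\sqrt t}(\sqrt\Delta)}$, with convergence in $L^2(G)$.

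For each $\ell$, finite propagation speed for the wave equation associated to $\Delta$ gives $\supp k_{\Phi_{\ell,\sqrt t}(\sqrt\Delta)}\subset\overline{B_\dist(0,2^\ell\sqrt t)}$, so on this set $(1+t^{-1/2}|x|_\dist)^\varepsilon\le(1+2^\ell)^\varepsilon\lesssim 2^{\varepsilon\ell}$. Combining this with Proposition~\ref{prp:l1l2estimate} applied with $r=2^\ell\sqrt t$ and with the Plancherel formula of Corollary~\ref{cor:plancherel} (which turns the $L^2$-norm of $k_{\Phi_{\ell,\sqrt t}(\sqrt\Delta)}$ into the integral bounded above, since $(\lambda^3+\lambda^{Q+1})\,\di\lambda/\lambda=(\lambda^2+\lambda^Q)\,\di\lambda$), one obtains
\[
\int_G|k_{\Phi_{\ell,\sqrt t}(\sqrt\Delta)}(x)|\,\big(1+t^{-1/2}|x|_\dist\big)^\varepsilon\di\mu(x)
\lesssim 2^{\varepsilon\ell}\,\min\{(2^\ell\sqrt t)^{\frac{Q+1}{2}},(2^\ell\sqrt t)^{\frac32}\}\,\max\{t^{-\frac34},t^{-\frac{Q+1}{4}}\}\,2^{-s\ell}\,\|F\|_{H^s}.
\]
It remains to sum over $\ell$. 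If $t\ge 1$ I would use $\min\{\cdots\}\le(2^\ell\sqrt t)^{3/2}$ and $\max\{\cdots\}=t^{-3/4}$: the powers of $t$ cancel and the $\ell$-th term is $\lesssim 2^{\ell(\varepsilon+3/2-s)}\|F\|_{H^s}$, whose sum over $\ell$ is finite exactly when $s>3/2+\varepsilon$. If $t\le 1$ I would instead use $\min\{\cdots\}\le(2^\ell\sqrt t)^{(Q+1)/2}$ and $\max\{\cdots\}=t^{-(Q+1)/4}$: again the powers of $t$ cancel, the $\ell$-th term is $\lesssim 2^{\ell(\varepsilon+(Q+1)/2-s)}\|F\|_{H^s}$, and the sum is finite exactly when $s>(Q+1)/2+\varepsilon$. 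In both cases the resulting bound is uniform in $t$, and summing the weighted $L^1$-norms of the $k_{\Phi_{\ell,\sqrt t}(\sqrt\Delta)}$ yields \eqref{eq:stimaA}.

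The change of variables in the first step, the Sobolev estimate $\|F(\cdot^2)\|_{H^s}\lesssim\|F\|_{H^s}$, and the $L^2$-convergence of the series are routine. The one point requiring care is the bookkeeping in the last step: it is precisely the freedom to choose, for each range of $t$, the more favourable of the two estimates in Proposition~\ref{prp:l1l2estimate} — the large-radius bound $r^{3/2}$ when $t\ge1$ and the small-radius bound $r^{(Q+1)/2}$ when $t\le1$ — together with the exact cancellation of the powers of $\sqrt t$ coming from the dilation and from Plancherel, that produces the two distinct Sobolev thresholds $3/2+\varepsilon$ and $(Q+1)/2+\varepsilon$.
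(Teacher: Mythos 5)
Your proposal is correct and follows essentially the same route as the paper: reduce to $y=0_G$ via \eqref{eq:integralkernel} and left-invariance, substitute $f(\lambda)=F(\lambda^2)$, decompose via Lemma~\ref{lem:decomp}, and combine finite propagation speed, Proposition~\ref{prp:l1l2estimate}, Corollary~\ref{cor:plancherel}, and Lemma~\ref{lem:decomp}(iii$'$) before summing in $\ell$. The only (harmless) deviation is in the case $t\le 1$, where the paper splits the sum at $2^\ell=t^{-1/2}$ and keeps both exponents, while you bound the minimum by $(2^\ell\sqrt t)^{(Q+1)/2}$ throughout so that the powers of $t$ cancel exactly; both yield the same threshold $s>(Q+1)/2+\varepsilon$.
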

\begin{proof}
First we observe that, for all $y \in G$, by \eqref{eq:integralkernel} and the left-invariance of the metric $\dist$,
\[\begin{split}
\int_G &|K_{F(t\Delta)}(x,y)|\,\big(1+t^{-1/2} \dist(x,y)\big)^{\varepsilon} \di\mu(x)\\
&=\int_G |k_{F(t\Delta)}(y^{-1}\,x)|\,m(y)\,\big(1+t^{-1/2} \dist(y^{-1}\,x,0_G)\big)^{\varepsilon} \, \di\mu(x)  \\
&=\int_G|k_{F(t\Delta)}(x)|\,\big(1+t^{-1/2} \dist(x,0_G)\big)^{\varepsilon} \di\mu(x) .
\end{split}\]

Define $f(\lambda)=F(\lambda^2)$ for all $\lambda\in \RR$. The function $f$ is even and supported in $[-2,2]$, and $F(t\Delta) = f(t^{1/2}\sqrt{\Delta})$ for all $t \in \Rpos$. Moreover
\begin{equation}\label{eq:sobolevestimate}
\|f\|_{H^s} \lesssim \|F\|_{H^s}.
\end{equation}

Let $f = \sum_{\ell =0}^\infty f_\ell$ be the decomposition given by Lemma \ref{lem:decomp}. Since $f(t^{1/2}\cdot)=\sum_{\ell} f_{\ell,t^{1/2}}$ and $\supp\hat{f}_{\ell,t^{1/2}} \subset [-2^{\ell}t^{1/2}, 2^{\ell}t^{1/2}]$, we can apply Proposition \ref{prp:l1l2estimate} to each function $f_{\ell,t^{1/2}}$ and sum these estimates up.
Namely, by finite propagation speed, Proposition \ref{prp:l1l2estimate}, Corollary~\ref{cor:plancherel}, Lemma \ref{lem:decomp}(iii'), and \eqref{eq:sobolevestimate},
\[\begin{split}
\int_G &|k_{f_{\ell,t^{1/2}}(\sqrt{\Delta})}(x)|\,\big(1+t^{-1/2}\dist(x,0_G)\big)^{\varepsilon} \di\mu(x)\\
&\lesssim (1+t^{-1/2}2^{\ell}t^{1/2})^{\varepsilon} \|k_{f_{\ell,t^{1/2}}(\sqrt{\Delta})}\|_1 \\
&\lesssim 2^{\ell\varepsilon}\,\min\{(2^{\ell}t^{1/2})^{(Q+1)/2},(2^{\ell}t^{1/2})^{3/2}\}\,\|k_{f_{\ell,t^{1/2}}(\sqrt{\Delta})}\|_2 \\
&\lesssim 2^{\ell\varepsilon}\,\min\{(2^{\ell}t^{1/2})^{(Q+1)/2},(2^{\ell}t^{1/2})^{3/2}\}\,\Big(\int_0^{\infty}|f_{\ell,t^{1/2}}(\lambda)|^2\,(\lambda^2+\lambda^Q)\di \lambda\Big)^{1/2}\\
&\lesssim 2^{\ell\varepsilon}\,\min\{(2^{\ell}t^{1/2})^{(Q+1)/2},(2^{\ell}t^{1/2})^{3/2}\}\, \max\{t^{-3/4},t^{-(Q+1)/4}\} \, 2^{-\ell s} \, \|F\|_{H^s}.
\end{split}\]

In the case $t \geq 1$, it is then
\[
\int_G  |k_{F(t\Delta)}(x)|\,\big(1+t^{-1/2} \dist(x,0_G)\big)^{\varepsilon}\di\mu(x) \leq C_s  \|F\|_{H^s} \sum_{\ell \geq 0} 2^{\ell(\varepsilon+3/2-s)},
\]
and the series on the right-hand side converges since $s > 3/2+\varepsilon$.

In the case $t \leq 1$, instead, it is
\begin{multline*}
\int_G  |k_{F(t\Delta)}(x)|\,\big(1+t^{-1/2} \dist(x,0_G)\big)^{\varepsilon}\di\mu(x) \\
\lesssim  \|F\|_{H^s} \left( t^{3/4-(Q+1)/4} \sum_{\ell \tc 2^\ell \geq t^{-1/2}} 2^{\ell(\varepsilon+3/2-s)} + \sum_{\ell \tc 2^\ell < t^{-1/2}} 2^{\ell\big(\varepsilon+(Q+1)/2-s\big)} \right),
\end{multline*}
and the term in parentheses is finite and bounded above uniformly in $t$ since $s > (Q+1)/2+\varepsilon$.
\end{proof}

We denote by $R_y$ the right translation operator defined by
\[
R_y f(x)=f(xy)
\]
for all $f : G \to \CC$ and $x,y\in G$.

\begin{lem}\label{lem:srgradient}
For all $f \in L^1(G)$ and $y,z \in G$,
\[
\|R_y f - R_z f\|_1 \leq \dist(y,z) \left\| \left|\nabla_{H} f\right|_g  \right\|_1.
\]
\end{lem}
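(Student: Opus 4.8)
The plan is to write the difference $f(xy)-f(xz)$ as the integral of the differential of $f$ along a horizontal curve joining $y$ to $z$, exploiting that the horizontal distribution and the metric $g$ are left-invariant, and then to integrate in the variable $x$ using that the Haar measure $\mu$ is right-invariant; a routine mollification reduces the general case to that of a smooth function.

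First I would reduce to the case $f\in C^\infty(G)$ with $f$ and $|\nabla_H f|_g$ both in $L^1(G)$. If $\||\nabla_H f|_g\|_1=\infty$ there is nothing to prove, so assume $\vfG_j f\in L^1(G)$ for all $j$. Let $(\phi_n)\subset C^\infty_c(G)$ be an approximate identity and set $g_n=\phi_n*f$, with convolution taken on the left with respect to the left Haar measure $\mu_\ell$. Then $g_n\in C^\infty(G)$, $g_n\to f$ in $L^1(G)$, and since each $\vfG_j$ is left-invariant one has $\vfG_j g_n=\phi_n*(\vfG_j f)$; by Minkowski's integral inequality $|\nabla_H g_n|_g\le\phi_n*|\nabla_H f|_g$ pointwise, so $\limsup_n\||\nabla_H g_n|_g\|_1\le\||\nabla_H f|_g\|_1$. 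Since $R_y$ is bounded on $L^1(G)$ we have $R_yg_n\to R_yf$ in $L^1(G)$; hence it suffices to establish the inequality for each $g_n$, and so we may assume from now on that $f\in C^\infty(G)$ with $f,|\nabla_H f|_g\in L^1(G)$.

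Next, fix $\delta>0$ and choose a horizontal curve $\gamma\colon[0,1]\to G$ with $\gamma(0)=z$, $\gamma(1)=y$ and $\int_0^1|\dot\gamma(t)|_g\di t<\dist(y,z)+\delta$; such a curve exists by the definition of the \CaC\ distance. For each $x\in G$ the curve $t\mapsto x\gamma(t)$ is absolutely continuous with compact image, so $t\mapsto f(x\gamma(t))$ is absolutely continuous and the chain rule gives
\[
R_yf(x)-R_zf(x)=f(x\gamma(1))-f(x\gamma(0))=\int_0^1(\di f)_{x\gamma(t)}\bigl((\di L_x)_{\gamma(t)}\dot\gamma(t)\bigr)\di t,
\]
where $L_x$ denotes left translation by $x$. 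Since $\gamma$ is horizontal and the horizontal distribution is left-invariant, $(\di L_x)_{\gamma(t)}\dot\gamma(t)\in H_{x\gamma(t)}G$, and by left-invariance of $g$ its $g$-norm equals $|\dot\gamma(t)|_g$; together with the defining property of $\nabla_H$ and the Cauchy--Schwarz inequality for the inner product $g_{x\gamma(t)}$ on $H_{x\gamma(t)}G$ this yields
\[
|R_yf(x)-R_zf(x)|\le\int_0^1|\nabla_H f(x\gamma(t))|_g\,|\dot\gamma(t)|_g\di t.
\]

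Finally, integrating over $x\in G$ against $\mu$ and applying Tonelli's theorem,
\[
\|R_yf-R_zf\|_1\le\int_0^1\Bigl(\int_G|\nabla_H f(x\gamma(t))|_g\di\mu(x)\Bigr)|\dot\gamma(t)|_g\di t=\Bigl(\int_0^1|\dot\gamma(t)|_g\di t\Bigr)\,\||\nabla_H f|_g\|_1,
\]
since the inner integral equals $\||\nabla_H f|_g\|_1$ for every $t$ by right-invariance of $\mu$ (the map $x\mapsto x\gamma(t)$ is a right translation). The right-hand side is $<(\dist(y,z)+\delta)\,\||\nabla_H f|_g\|_1$, and letting $\delta\to0$ proves the lemma. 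I do not expect a genuine obstacle here; the one point deserving attention is the interplay of the two invariances — left-invariance of the sub-Riemannian structure keeps the translated curve horizontal with unchanged speed, whereas right-invariance of $\mu$ makes the inner integral independent of $t$ — together with the routine care needed to justify the fundamental theorem of calculus along an absolutely continuous, finite-length horizontal curve.
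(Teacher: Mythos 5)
Your argument is correct and is essentially the proof the paper invokes: the paper simply cites \cite[Lemma VIII.1.1]{varopoulos_analysis_1992} and notes it carries over to the non-unimodular setting, and your write-up is exactly that argument — integrate the horizontal gradient along an almost length-minimizing horizontal curve translated on the left, with the key observation that the right Haar measure makes the inner integral independent of the curve parameter. The mollification and invariance details you supply are the routine adaptations needed for non-unimodularity, so there is nothing to add.
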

\begin{proof}
The proof of \cite[Lemma VIII.1.1]{varopoulos_analysis_1992} applies also to non-unimodular groups.
\end{proof}

\begin{prop}
Let $F \in L^2(\RR)$ be supported in $[-4,4]$. Then the estimate
\begin{equation}\label{eq:stimaB}
\int_G|K_{F(t\Delta)}(x,y)-K_{F(t\Delta)}(x,z)| \, \di\mu(x) \leq  C_{s} \, t^{-1/2} \dist(y,z) \, \|F\|_{H^s}
\end{equation}
holds for all $y,z \in G$ and $s,t \in \Rpos$ satisfying one of the following conditions:
\begin{itemize}
\item $t \geq 1$ and $s > 3/2$;
\item $t \leq 1$ and $s > (Q+1)/2$.
\end{itemize}
\end{prop}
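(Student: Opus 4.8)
The plan is to peel off a heat semigroup on the side of the $y$-variable, so that the finite difference in $y$ and $z$ falls only on the heat kernel, which is well understood. Set $G(\lambda)=e^{\lambda/2}F(\lambda)$; since $F$ is supported in $[-4,4]$, so is $G$, and on the support of $F$ the function $G$ is $F$ times a fixed smooth compactly supported function, so $\|G\|_{H^s}\lesssim\|F\|_{H^s}$ (multiplication by such a function is bounded on $H^s(\RR)$). We may assume $\|F\|_{H^s}<\infty$, the estimate \eqref{eq:stimaB} being trivial otherwise. Since $F(t\Delta)=G(t\Delta)\,e^{-t\Delta/2}$, in terms of integral kernels
\[
K_{F(t\Delta)}(x,y)=\int_G K_{G(t\Delta)}(x,w)\,K_{e^{-t\Delta/2}}(w,y)\di\mu(w).
\]
Subtracting the same identity with $z$ in place of $y$, taking absolute values, integrating in $x$, using Tonelli's theorem and then Proposition~\ref{prp:stimaA} with $\varepsilon=0$ applied to $G$ (whose hypotheses coincide with those of the present statement, giving $\sup_w\int_G|K_{G(t\Delta)}(x,w)|\di\mu(x)\le C_s\|G\|_{H^s}$), one obtains
\[
\int_G|K_{F(t\Delta)}(x,y)-K_{F(t\Delta)}(x,z)|\di\mu(x)\le C_s\,\|F\|_{H^s}\int_G|K_{e^{-t\Delta/2}}(w,y)-K_{e^{-t\Delta/2}}(w,z)|\di\mu(w).
\]

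It then remains to bound the last integral by a constant times $t^{-1/2}\dist(y,z)$, and for this the key is the inversion symmetry of the heat kernel. Since $e^{-t\Delta/2}$ is self-adjoint on $L^2(G)$ and $h_{t/2}$ is real, \eqref{eq:integralkernel} yields $K_{e^{-t\Delta/2}}(x,y)=K_{e^{-t\Delta/2}}(y,x)$, i.e.\ $h_{t/2}(g^{-1})\,m(g)=h_{t/2}(g)$ for all $g\in G$, and hence
\[
K_{e^{-t\Delta/2}}(w,y)=h_{t/2}(y^{-1}w)\,m(y)=h_{t/2}(w^{-1}y)\,m(w).
\]
Substituting this and changing variables $w\mapsto w^{-1}$, which turns $m(w)\di\mu(w)$ into $\di\mu(w)$ (as $\di\mu_\ell=m\di\mu$ and inversion interchanges the left and right Haar measures), one gets
\[
\int_G|K_{e^{-t\Delta/2}}(w,y)-K_{e^{-t\Delta/2}}(w,z)|\di\mu(w)=\int_G|h_{t/2}(wy)-h_{t/2}(wz)|\di\mu(w)=\|R_y h_{t/2}-R_z h_{t/2}\|_1.
\]
Lemma~\ref{lem:srgradient} together with Proposition~\ref{prp:gradientestimates} (applied with $t/2$ in place of $t$) bounds the right-hand side by $\dist(y,z)\,\big\||\nabla_H h_{t/2}|_g\big\|_1\lesssim t^{-1/2}\dist(y,z)$; combined with the previous display this gives \eqref{eq:stimaB}.

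The main obstacle, and the point of this rearrangement, is uniformity in $y$ and $z$. Reducing $\int_G|K_{F(t\Delta)}(x,y)-K_{F(t\Delta)}(x,z)|\di\mu(x)$ directly to an $L^1$ difference of translates of $k_{F(t\Delta)}$ produces modular factors of the form $m(y^{-1}z)$, which grow like $e^{Q\dist(y,z)}$ and are harmless only when $\dist(y,z)$ is small, and it yields a \emph{left} translation, to which the left-invariant horizontal gradient underlying Lemma~\ref{lem:srgradient} and Proposition~\ref{prp:gradientestimates} is not adapted. Splitting off the heat semigroup next to the $y$-variable and invoking the symmetry $h_{t/2}(g^{-1})m(g)=h_{t/2}(g)$ simultaneously turns the left translation into a right translation and makes all modular factors cancel, so the estimate holds for all $y,z\in G$ and all $t>0$ with no restriction; the conditions on $(s,t)$ enter only through the application of Proposition~\ref{prp:stimaA}.
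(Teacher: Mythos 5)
Your proof is correct and follows essentially the same route as the paper's: factor off a heat semigroup so that the $y$--$z$ difference lands on the heat kernel, control the remaining factor in $L^1$ via Proposition~\ref{prp:stimaA} with $\varepsilon=0$, and conclude with Lemma~\ref{lem:srgradient} and Proposition~\ref{prp:gradientestimates}. The only (harmless) difference is cosmetic: the paper first uses self-adjointness of $F(t\Delta)$ (after reducing to real $F$) to pass to $\|R_y k_{F(t\Delta)}-R_z k_{F(t\Delta)}\|_1$ and then factors $k_{F(t\Delta)}=k_{\phi(t\Delta)}*h_t$ with Young's inequality, whereas you factor first and exploit the symmetry $h_{t/2}(g^{-1})m(g)=h_{t/2}(g)$ of the heat kernel alone, which lets you skip the reduction to real-valued $F$.
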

\begin{proof}
By splitting $F$ into its real and imaginary parts, it is not restrictive to assume that $F$ is real-valued. In particular the operator $F(t\Delta)$ is self-adjoint and
\[\begin{split}
\int_G &|K_{F(t\Delta)}(x,y)-K_{F(t\Delta)}(x,z)| \, \di\mu(x) \\
&= \int_G|K_{F(t\Delta)}(y,x)-K_{F(t\Delta)}(z,x)| \, \di\mu(x) \\
&=\int_G| k_{F(t\Delta)}(x^{-1}y)-k_{F(t\Delta)}(x^{-1}z)   |\,m(x)\di\mu(x)\\
&=\int_G| k_{F(t\Delta)}(xy)-k_{F(t\Delta)}(xz)   |\di\mu(x)\\
&= \|R_y k_{F(t\Delta)} - R_z k_{F(t\Delta)} \|_1.
\end{split}\]
Define $\phi(\lambda)=F(\lambda)\,e^{-\lambda}$ for all $\lambda\in \RR$. Then
\[
k_{F(t\Delta)}=k_{\phi(t\Delta)}* h_t
\]
and, by Young's inequality,
\[
\|R_y k_{F(t\Delta)} - R_z k_{F(t\Delta)} \|_1 \leq \|k_{\phi(t\Delta)}\|_1 \|R_y h_t - R_z h_t \|_1.
\]
Note now that, under our assumptions on $t$ and $s$, by Proposition \ref{prp:stimaA} it follows that
\[
\|k_{\phi(t\Delta)}\|_1 \lesssim \|\phi\|_{H^s} \lesssim \|F\|_{H^s}.
\]
On the other hand, by Lemma \ref{lem:srgradient} and Proposition \ref{prp:gradientestimates},
\[
\|R_y h_t - R_z h_t \|_1 \leq \dist(y,z) \, \left\|\left| \nabla_{H} h_t \right|_g \right\|_1  \lesssim t^{-1/2} \dist(y,z)
\]
and the conclusion follows.
\end{proof}

We can finally prove our main result.

\begin{proof}[Proof of Theorem \ref{thm:moltiplicatori}]
Choose $\varepsilon > 0$ such that $s_0>\frac{3}{2}+\varepsilon$ and $s_{\infty}>\frac{Q+1}{2}+\varepsilon$.
Let $F$ be as in the statement of the theorem. It is not restrictive to assume that $F$ is real-valued, so $F(\Delta)$ is self-adjoint. Define
\[
F_j(\lambda)=F(2^j\lambda)\,\psi(\lambda)\qquad\forall j\in\ZZ \quad \forall\lambda\in\Rpos,
\]
where $\psi$ is as in \eqref{eq:sumpsi}. Then
\[
F(\Delta)=\sum_{j\in\ZZ} F_j(2^{-j}\Delta)
\]
in the sense of strong convergence of operators on $L^2(G)$, because the $L^2$-spectrum of $\Delta$ is $\Rnon$ and $\{0\}$ has null spectral measure. Since each function $F_j$ is supported in $[1/4,4]$ we may apply estimates (\ref{eq:stimaA}) and (\ref{eq:stimaB}) to $F_j$ and $t=2^{-j}$, to obtain that
\begin{equation}\label{eq:stimaAD}
\sup_{y\in G} \int_G|K_{F_j(2^{-j}\Delta)}(x,y)|\big(1+2^{j/2} \dist(x,y)\big)^{\varepsilon}  \di\mu(x) \lesssim  \begin{cases}
\|F\|_{0,s_0}& \forall~j\leq 0\\
\|F\|_{\infty,s_{\infty}}& \forall~j>0,
\end{cases}
\end{equation}
and, for all $y,z \in G$,
\begin{equation}\label{eq:stimaBD}
\int_G |K_{F_j(2^{-j}\Delta)}(x,y)-K_{F_j(2^{-j}\Delta)}(x,z)|\di\mu(x)
\lesssim \begin{cases}
2^{j/2}\dist(y,z) \,\|F\|_{0,s_0}& \forall~j\leq 0\\
2^{j/2}\dist(y,z) \,\|F\|_{\infty,s_{\infty}}& \forall~j>0.
\end{cases}
\end{equation}
Then the operator $F(\Delta)$ satisfies the hypotheses of Theorem \ref{thm:Teolim} and consequently it is of weak type $(1,1)$, bounded on $L^p(G)$ for all $p \in (1,2]$ and, by duality, for all $p \in [2,\infty)$. By Theorem \ref{thm:TeolimH1} it follows that $F(\Delta)$ is also bounded from $H^1(G)$ to $L^1(G)$ and a duality argument gives the boundedness from $L^{\infty}(G)$ to $BMO(G)$.
\end{proof}

\acknowledgments


\begin{thebibliography}{55}


\bibitem{A1} F. Astengo,
            \emph{A class of $L^p$ convolutors on harmonic extensions of H-type groups},
             J. Lie Theory \textbf{5} (1995), 147--164.

\bibitem{A2} F. Astengo,
            \emph{Multipliers for a distinguished Laplacean on solvable extensions of H-type groups},
            Monatsh. Math. \textbf{120} (1995), 179--188.


\bibitem{agrachev_subriemannian_2009}
A.~A. Agrachev, \emph{Any sub-{R}iemannian metric has points of
  smoothness}, Dokl. Akad. Nauk \textbf{424} (2009), 295--298. Updated
  version: \texttt{arXiv:0808.4059}.


\bibitem{ABGR}
A.~A. Agrachev, U. Boscain, J.-P. Gauthier, and F. Rossi,
\emph{The intrinsic hypoelliptic {L}aplacian and its heat kernel on unimodular {L}ie groups},
J. Funct. Anal. \textbf{256} (2009), 2621--2655.


\bibitem{BL} J. Bergh and J. L\"ofstr\"om, \emph{Interpolation spaces. An introduction}, Springer, Berlin-New York, 1976.



\bibitem{C} M. Christ,
               \emph{A T(b) theorem with remarks on analytic capacity and the Cauchy integral},
               Coll. Math. \textbf{60/61} (1990), 601--628.

\bibitem{christ_multipliers_1991}
M. Christ, \emph{{$L^p$} bounds for spectral multipliers on nilpotent groups},
Trans. Amer. Math. Soc. \textbf{328} (1991), 73--81.

\bibitem{christ_spectral_1996}
M. Christ and D. M\"uller,
\emph{On $L^p$ spectral multipliers for a solvable Lie group},
Geom. Funct. Anal. \textbf{6} (1996), 860--876.


\bibitem{CW} R.~R. Coifman and G. Weiss,
                 \emph{Analyse harmonique non commutative sur certains espaces homogenes},
                 Lecture Notes in Mathematics 242, Springer, Berlin-New York, 1971.


\bibitem{CW2} R.~R. Coifman and G. Weiss,
                   \emph{Extensions of Hardy spaces and their use in Analysis},
                   Bull. Am. Math. Soc. \textbf{83} (1977), 569--645.


\bibitem{cowling_spectral_1994} M.~G. Cowling, S. Giulini, A. Hulanicki, and G. Mauceri,
                     \emph{Spectral multipliers for a distinguished Laplacian on certain groups of exponential growth},
                     Studia Math. \textbf{111} (1994), 103--121.

\bibitem{cowling_estimates_1993} M.~G. Cowling, S. Giulini, and S. Meda,
                     \emph{$L^p-L^q$ estimates for functions of the Laplace-Beltrami operator on noncompact symmetric spaces. I},
                     Duke Math. J. \textbf{72} (1993), 109--150.


\bibitem{cowling_subfinsler_2013}
M.~G. Cowling and A.~Martini, \emph{Sub-{F}insler geometry and finite
  propagation speed}, in: {Trends in Harmonic Analysis}, Springer, Milan, 2013,
  pp.~147--205.

\bibitem{cowling_spectral_2001}
M.~G. Cowling and A.~Sikora, \emph{A spectral multiplier thoerem for a sublaplacian on $\mathrm{SU}(2)$},
  Math. Z. \textbf{238} (2001), 1--36.

\bibitem{DR} E. Damek and F. Ricci,
                   \emph{Harmonic analysis on solvable extensions of $H$-type groups},
                   J. Geom. Anal. \textbf{2} (1992), 213--248.

\bibitem{duoandikoetxea_fourier_2001} J. Duoandikoetxea,
\emph{Fourier analysis}, American Mathematical Society, Providence, RI, 2001.

\bibitem{folland_hardy_1982} G.~B. Folland and E. M. Stein,
                 \emph{Hardy spaces on homogeneous groups},
                 Princeton University Press, Princeton, N.J., 1982.


\bibitem{G1} M. Gnewuch, \emph{Spectral multipliers for sub-Laplacians on amenable Lie groups with exponential volume growth}, Math. Z. \textbf{246} (2004), 69--83.

\bibitem{gnewuch_differentiable_2006} M. Gnewuch, \emph{Differentiable $L^p$-functional calculus for certain sums of non-commuting operators}, Colloq. Math. \textbf{105} (2006), 105--125.

\bibitem{gole_note_1995}
C.~Gol{\'e} and R.~Karidi, \emph{A note on {C}arnot geodesics in nilpotent
  {L}ie groups}, J. Dyn. Control Syst. \textbf{1} (1995), 535--549.

\bibitem{guivarch_croissance_1973}
Y.~Guivarc'h, \emph{Croissance polynomiale et p\'eriodes des fonctions
  harmoniques}, Bull. Soc. Math. France \textbf{101} (1973), 333--379.

\bibitem{HE} S. Helgason,
                 \emph{Groups and geometric analysis},
                 Academic Press, Orlando, FL, 1984.

\bibitem{hebisch_multiplier_1993}
W. Hebisch, \emph{Multiplier theorem on generalized {H}eisenberg groups},
Colloq. Math. \textbf{65} (1993), 231--239.

\bibitem{HEB} W. Hebisch,
                   \emph{Analysis of Laplacians on solvable Lie groups},
                   Notes for the ICMS Instructional Conference ``Analysis on Lie Groups \& Partial Differential Equations'' (Edinburgh, April 1999). Available at
  \texttt{http://www.math.uni.wroc.pl/\textasciitilde{}hebisch/}.

\bibitem{H3} W. Hebisch, \emph{Spectral multipliers on exponential growth solvable Lie groups}, Math. Z. \textbf{229} (1998), 435--441.

\bibitem{HEB2} W. Hebisch,
                   \emph{Calder\'on--Zygmund decompositions on amenable groups}, preprint (2008). Available at
  \texttt{http://www.math.uni.wroc.pl/\textasciitilde{}hebisch/}.


\bibitem{hebisch_sub-Laplacians_2005}
W. Hebisch, J. Ludwig, and D. M\"uller,
\emph{Sub-Laplacians of holomorphic $L^p$-type on exponential solvable groups},
J. Lond. Math. Soc. \textbf{72} (2005), 364--390.

\bibitem{hebisch_spectral_2005}
W. Hebisch, G. Mauceri, and S. Meda,
\emph{Spectral multipliers for sub-Laplacians with drift on Lie groups},
Math. Z. \textbf{251} (2005), 899--927.

\bibitem{HS} W. Hebisch and T. Steger,
                 \emph{Multipliers and singular integrals on expo\-nen\-tial growth groups},
                 Math. Z. \textbf{245} (2003) 37--61.

\bibitem{hewitt_abstract_1979}
E.~Hewitt and K.~A. Ross, \emph{Abstract harmonic analysis. {V}ol. {I}}, second
  ed., Springer, Berlin, 1979.

\bibitem{hrmander_hypoelliptic_1967}
L.~H{\"o}rmander, \emph{Hypoelliptic second order differential equations}, Acta
  Math. \textbf{119} (1967), 147--171.

\bibitem{HU} G.~A. Hunt,
                  \emph{Semigroups of measures on Lie groups},
                   Trans. Amer. Math. Soc. \textbf{81} (1956), 264--293.


\bibitem{kenig_divergence_1982}
C.~E. Kenig, R.~J. Stanton, and P.~A. Tomas,
\emph{Divergence of eigenfunction expansions},
J. Funct. Anal. \textbf{46} (1982), 28--44.


\bibitem{liu_abnormal_1994}
W.~Liu and H.~J. Sussmann, \emph{Abnormal sub-{R}iemannian minimizers},
  in: Differential equations, dynamical systems, and control science, Dekker, New York, 1994, pp.~705--716.

\bibitem{LVY} L. Liu, M. Vallarino, and D. Yang, \emph{Dyadic sets, maximal functions and applications on $ax+b$-groups}, Math. Z. \textbf{270} (2012), 515--529.


\bibitem{martini_spectral_2011}
A.~Martini, \emph{{Spectral theory for commutative algebras of differential
  operators on Lie groups}}, J. Funct. Anal. \textbf{260} (2011),
  2767--2814.

\bibitem{martini_heisenbergreiter}
A. Martini, \emph{Spectral multipliers on Heisenberg-Reiter and related groups},
  Ann. Mat. Pura Appl. \textbf{194} (2015), 1135--1155.

\bibitem{MMue}
A. Martini and D. M\"uller,
Spectral multiplier theorems of Euclidean type on new classes of $2$-step stratified groups,
\textit{Proc. Lond. Math. Soc.} \textbf{109} (2014), 1229--1263.

\bibitem{mauceri_vectorvalued_1990}
G. Mauceri and S. Meda, \emph{Vector-valued multipliers on stratified groups},
Rev. Mat. Iberoam. \textbf{6} (1990), 141--154.

\bibitem{melrose_propagation_1986}
R.~Melrose, \emph{Propagation for the wave group of a positive subelliptic
  second-order differential operator}, in: Hyperbolic equations and related topics
  ({K}atata/{K}yoto, 1984), Academic Press, Boston, MA, 1986, pp.~181--192.

\bibitem{Montgomery}
    {R. Montgomery},
\emph{A tour of subriemannian geometries, their geodesics and
              applications},
 {American Mathematical Society, Providence, RI},
      {2002}.

\bibitem{monti_regularity_2014}
R.~Monti, \emph{The regularity problem for sub-{R}iemannian geodesics},
  in: Geometric control theory and sub-{R}iemannian geometry, Springer, Cham, 2014, pp.~313--332.

\bibitem{mustapha_multiplicateurs_1998} S. Mustapha, \emph{Multiplicateurs de Mikhlin pour une classe particuli\`ere de groupes non-unimodulaires}, Ann. Inst. Fourier (Grenoble) \textbf{48} (1998), 957--966.


\bibitem{MueS}
D. M{\"u}ller and E. M. Stein, \emph{On spectral multipliers for {H}eisenberg and
  related groups}, J. Math. Pures Appl. \textbf{73} (1994), 413--440.

\bibitem{nagel_balls_1985}
A.~Nagel, E.~M. Stein, and S.~Wainger, \emph{Balls and metrics defined by
  vector fields. {I}. {B}asic properties}, Acta Math. \textbf{155} (1985),
  103--147.

\bibitem{OV}  A. Ottazzi and M. Vallarino, \emph{Spectral multipliers for Laplacians with drift on Damek--Ricci spaces}, Math. Nachr. \textbf{287} (2014), 1837--1847.


\bibitem{sikora_multiplicateurs_1992}
A.~Sikora, \emph{Multiplicateurs associ\'es aux souslaplaciens sur les groupes
  homog\`enes}, C. R. Acad. Sci. Paris S\'er. I Math. \textbf{315} (1992),
  417--419.


\bibitem{S} E. M. Stein,
               \emph{Harmonic analysis: real-variable methods, orthogonality, and oscillatory integrals},
               Princeton University Press, Princeton, N.J., 1993.

\bibitem{SW} E. M. Stein and G. Weiss,
							\emph{Introduction to Fourier analysis on Euclidean spaces},
							Princeton University Press, Princeton, N.J., 1971.


\bibitem{ter_elst_weighted_1998}
A.~F.~M. ter Elst and D.~W. Robinson, \emph{Weighted subcoercive operators on
  {L}ie groups}, J. Funct. Anal. \textbf{157} (1998), 88--163.

\bibitem{V1} M. Vallarino, \emph{Analysis on harmonic extensions of H-type groups}, PhD Thesis, Universit\`a degli Studi di Milano--Bicocca, 2005. \texttt{arXiv:1504.00329}.

\bibitem{V2} M. Vallarino, \emph{Spectral multipliers on Damek--Ricci spaces}, J. Lie Theory \textbf{17} (2007), 163--189.

\bibitem{V3} M. Vallarino, \emph{Spaces $H^1$ and $BMO$ on $ax+b$-groups}, Collect. Math. \textbf{60} (2009), 277--295.


\bibitem{varadarajan_lie_1974}
V.~S. Varadarajan, \emph{Lie groups, {L}ie algebras, and their
  representations}, Prentice-Hall Inc., Englewood Cliffs, N.J., 1974.



\bibitem{varopoulos_analysis_1992}
N.~T. Varopoulos, L.~Saloff-Coste, and T.~Coulhon, \emph{Analysis and geometry
  on groups}, Cambridge University Press, Cambridge, 1992.


\end{thebibliography}
\end{document}